\newtheorem{theorem}{Theorem}[subsection]
\newtheorem{lemma}[theorem]{Lemma}
\newtheorem{cor}[theorem]{Corollary}
\newtheorem{proposition}[theorem]{Proposition}
\theoremstyle{remark}
\newtheorem{remark}[theorem]{Remark}
\numberwithin{equation}{subsection}
\def\eg{\emph{e.g.}}
\def\la{\langle}
\def\ra{\rangle}
\let\dsum\+
\def\L{\Lambda}
\def\s{\sigma}
\def\LL{\Bbb L}
\def\discr{\operatorname{discr}}
\def\id{\operatorname{id}}
\def\rk{\operatorname{rk}}
\gdef\mnote#1{\marginpar{\footnotesize
 \tolerance\@M\spaceskip2.6\p@ plus10\p@ minus.9\p@\rm#1}}}
\def\Dg:{\endgraf{\bf Dg:\enspace}\ignorespaces}
\let\Bbb\mathbb
\let\Cal\mathcal
\def\G{\Gamma}
\def\s{\sigma}
\newcommand{\be}{\begin{equation}}
\newcommand{\ee}{\end{equation}}
\let\ge\geqslant % >=
\let\le\leqslant % <=
\let\la\langle
\let\ra\rangle
\let\til\widetilde
\def\Z{\Bbb Z}
\def\R{\Bbb R}
\def\Q{\Bbb Q}
\def\M{\Bbb M}
\def\L{\Lambda}
\def\Rp#1{\Bbb{RP}^{#1}}
\def\Aut{\operatorname{Aut}}
\def\conj{\operatorname{conj}}
\newcommand{\addresseshere}{%
  \enddoc@text\let\enddoc@text\relax
}
\begin{document}

\renewcommand{\arraystretch}{1.2}
\title[Pure deformation classification]
{Chirality of Real Non-singular Cubic Fourfolds and Their Pure Deformation Classification}
\author[]{ S.~Finashin, V.~Kharlamov}
%\author[S.~Finashin ]{Sergey Finashin and  }
\address{
Department of Mathematics, Middle East Tech. University\endgraf
 06800 Ankara Turkey}
\email{serge@metu.edu.tr}
\address{Universit\'{e} de Strasbourg et IRMA (CNRS)\endgraf 7 rue Ren\'{e}-Descartes, 67084 Strasbourg Cedex, France}
\email{kharlam@math.unistra.fr}

\subjclass[2010]{Primary: 14P25. Secondary: 14J10, 14N25, 14J35, 14J70.}

\keywords{}
\date{}
%\dedicatory{}

\begin{abstract} In our previous works we have classified real non-singular cubic hypersurfaces in the 5-dimensional projective space up to equivalence that includes both real projective transformations and continuous variations of coefficients preserving the hypersurface non-singular. Here, we perform a finer classification giving a full answer to the chirality problem: which of real non-singular cubic hypersurfaces can not be continuously deformed to their mirror reflection.
\end{abstract}

\maketitle
\setlength\epigraphwidth{.66\textwidth}
\epigraph{... I'll tell you all my ideas about Looking-glass House. First, there's the room you can see through the glass -  that's just the same as our drawing room, only the things go the other way ...
How would you like to live in Looking-glass House, Kitty? I wonder if they'd give you milk in there? Perhaps Looking-glass milk isn't good to drink...
}
{Lewis Carroll,
\textit{Through the Looking-Glass,  and What Alice Found There.}
(cf. note 6 on page 144 in \cite{annotated})
}

% %%%%%%%%%%%%%%                     Section 1
\section{Introduction}

\subsection{Chirality problem}
There are two deformation equivalence relations emerging naturally in the study of real non-singular projective hypersurfaces in the framework of 16th Hilbert's problem.
One of them is  the {\it pure deformation equivalence}
that assigns hypersurfaces
to the same equivalence class
if they can be
joined by a continuous path (called a {\it real deformation}) in the space of real non-singular projective hypersurfaces of  some fixed degree.
The other one is the
{\it coarse deformation equivalence},
in which real deformations
are combined with real projective transformations.

If the dimension of the ambient projective space is even, then the group of real projective transformations is connected,
and the above equivalence relations coincide. By contrary, if
the dimension of the ambient projective space is odd,
this group has two connected components, and
some of coarse deformation classes may split into two pure deformation classes.
The hypersurfaces in such a class are
not pure deformation equivalent to their mirror images and are called {\it chiral}.
The hypersurfaces in the other classes are called {\it achiral}, since each of them is pure deformation equivalent to
its mirror image.

The first case where a discrepancy between pure and coarse deformation equivalences
shows up is that of real non-singular quartic surfaces in 3-space (achirality of all real non-singular cubic surfaces is due to F.~Klein \cite{Klein}). In this case it was studied in \cite{Kh1, Kh2},  where it was used to upgrade
the coarse deformation classification of real non-singular quartic surfaces obtained by V.~Nikulin \cite{stability}
to a pure deformation classification.

Real non-singular cubic fourfolds is a next by complexity case.
Their deformation study was launched in
\cite{deformation}, where we classified them
up to coarse deformation equivalence. Then
in \cite{chirality} we
began the study of
the chirality phenomenon and gave complete answers for
cubic fourfolds of maximal, and almost maximal, topological complexity. The approach, which we elaborated and applied in \cite{chirality}
relies on the surjectivity of the period map for cubic fourfolds established by R.~Laza \cite{Laza} and E.~Looijenga \cite{Loo}.
(Other recent applications of the period maps to real deformation problems are given,
for example, in  \cite{components}, \cite{withSaito}, \cite{ACT}, \cite{octics}, and \cite{HR}.)

Recall that according to  \cite{deformation} there exist precisely 75 coarse deformation classes of real non-singular fourfold cubic hypersurfaces $X\subset P^5$
(throughout the paper $X$
stands both for the variety itself and for its complex point set, while $X_\R=X\cap P^5_\R$ denotes the real locus).
These classes are determined by the isomorphism type of the
pairs $(\conj^* : \M(X)\to \M(X), h\in \M(X))$
where $\M(X)=H^4(X;\Z)$ is considered as a lattice, $h\in \M(X)$ is
the {\it polarization class} that is
 induced from the standard generator
of $H^4(P^5; \Z)$, and $\conj^*$ is
induced by complex conjugation $\conj:X\to X$.
This result can be simplified further and expressed in terms of a few simple numerical invariants. Namely, it is sufficient to consider the sublattice
 $\M_+^0(X)\subset \M(X)$, $\M_+^0(X)=\{ x\in \M(X) : \conj^*x=x, xh=0\}$, and to retain only the following three invariants:
the rank $\rho$ of  $\M_+^0$, the rank
$d$ of the 2-primary part $\discr_2\M_+^0$
of the discriminant $\discr\M_+^0$ (that is $d$ is the rank of the $\Z/2$-vector space $\discr_2\M_+^0/2 \discr_2\M_+^0$), and the type, even or odd, of the discriminant form on $\discr_2\M_+^0$
(see Theorem \ref{deformation-theorem} below).

Thus, to formulate
the pure deformation classification of real non-singular cubic fourfolds,
it is sufficient to list the
triples of invariants ($\rho$, $d$, parity)
which specify the
coarse deformation classes
and to indicate which of the coarse classes are chiral, and which ones are achiral.

\subsection{Main result}
\begin{theorem}\label{main}
Among the 75 coarse deformation classes precisely 18 are chiral, and, thus,
the number of pure deformation classes is 93.
The chiral classes have pairs $(\rho,d)$ satisfying
$\rho+ d\le 12$.
The only achiral classes with $\rho + d \le 12$ are three classes with $4\le \rho=d \le 6$ and one class with $(\rho,d)=(8,4)$ and even pairity.
\end{theorem}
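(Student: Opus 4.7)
The strategy, extending the approach of \cite{chirality}, is to recast the chirality question as a problem about connected components of a real period domain modulo an arithmetic group action, and then to reduce it to a lattice-theoretic computation in terms of $(\rho,d,\text{parity})$.

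First, I set up the period-theoretic framework. The Laza--Looijenga surjectivity of the complex period map admits a real equivariant refinement: within a fixed coarse class specified by $(\rho,d,\text{parity})$, the set of pure deformation classes is in bijection with the orbits of $\pi_0(\DPR^\circ)$ under the stabilizer $\Gamma\subset \Aut(\M(X),h)$ of $\conj^*$, where $\DPR^\circ$ denotes the complement of the discriminant strata in the real period domain attached to the $\pm 1$-eigenlattices of $\conj^*$. Generically $\DPR^\circ$ has two connected components, distinguished by an orientation datum on a positive-definite $2$-plane split between the even and odd eigenlattices. Consequently the class is achiral if and only if $\Gamma$ maps onto the component group, \ie\ iff a certain sign character $\epsilon\colon\Gamma\to\{\pm 1\}$ is non-trivial.

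Second, I reduce achirality to an arithmetic criterion. By Nikulin's theory of discriminant forms and gluings, both $\M_+^0$ and the complementary sublattice can be recovered (up to the ambiguity already accounted for in \cite{deformation}) from $(\rho,d,\text{parity})$ together with the fixed embedding into the unimodular host lattice $\M(X)$. The character $\epsilon$ can then be expressed as the product of a real spinor norm with a sign character detecting orientation on a positive-definite $2$-plane; its image is controlled by reflections in roots of small norm together with the action on the discriminant group. This converts chirality into a finite, case-by-case check across the 75 triples already tabulated in \cite{deformation}.

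Third, I run the case analysis. A rank heuristic explains the bound $\rho+d\le 12$: when $\rho+d$ is large, the odd eigenlattice has sufficient rank and its discriminant form enough symmetry that $\Gamma$ automatically contains root reflections realizing $\epsilon=-1$, giving achirality. When $\rho+d\le 12$, the rigidity of the odd eigenlattice leaves room for obstructions and chirality is expected. The four exceptional achiral small-$\rho+d$ triples $(\rho,d)\in\{(4,4),(5,5),(6,6)\}$ and $(\rho,d)=(8,4)$ with even parity require the construction of ad hoc swap-orientation automorphisms, arising either from an internal symmetry of the discriminant form peculiar to those invariants or from extra roots visible only there.

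The main obstacle is precisely this last step: in each of the eighteen chiral cases one must prove non-existence of any lattice automorphism of $(\M(X),h,\conj^*)$ inducing $\epsilon=-1$, by computing the image of $\Gamma$ inside the orthogonal group of the discriminant form and verifying triviality of the sign character on it; dually, in each of the four exceptional achiral cases one must exhibit such an automorphism explicitly. Both tasks are finite but delicate, and they constitute the technical core of the argument, extending the partial analysis of \cite{chirality} to the complete list of 75 coarse classes and yielding the count $75+18=93$ of pure deformation classes.
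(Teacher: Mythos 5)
Your proposal stops where the proof has to begin. Everything up to the reduction ``chirality of $X$ $\Leftrightarrow$ an arithmetic condition on the eigenlattice'' is already available in the literature this paper builds on: Theorem~\ref{chirality-theorem} (quoted from \cite{chirality}) states the precise criterion, and Proposition~\ref{determination} identifies $\M_+^0$ from $(\rho,d,\text{parity})$. The actual content of Theorem~\ref{main} is the verification, class by class, of which of the 75 lattices are chiral, and this is exactly the step you defer as ``finite but delicate \dots the technical core of the argument''. The paper's proof consists of carrying it out: enumerating the lattices $\M^0_+$ in Tables~\ref{even-lattices} and \ref{odd-lattices}; running Vinberg's algorithm and analysing symmetries of the resulting Coxeter graphs via Theorem~\ref{existence of symmetry} and Proposition~\ref{reversing}; propagating achirality through the new reduction/extension lemmas (Lemmas~\ref{reduction}, \ref{extension}, \ref{A1-stabilization}, \ref{reduction-1}, \ref{reduction-2}); and, for the two lattices whose fundamental polyhedra are too large, enlarging the reflection group by $4$-roots and using Lemma~\ref{reflection_subgroup}. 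Without some version of this work, the numbers $18$ and $93$, the bound $\rho+d\le 12$, and the identification of the exceptional achiral classes remain assertions, so the proposal is a programme rather than a proof.

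Beyond incompleteness, the criterion you formulate is not correct as stated. The complement of the discriminant (root) hyperplanes in the relevant hyperbolic domain does not ``generically have two connected components'': it is cut into infinitely many chambers by the $2$- and $6$-root walls, and the right statement (Theorem~\ref{chirality-theorem}) requires a \emph{single} automorphism of $\M^0_+$ that is simultaneously $P$-direct (preserving a fixed fundamental chamber and its lift $P^\#$) and $\Z/3$-reversing. Non-triviality of a sign character on the full stabilizer $\Gamma$ is weaker and would give the wrong answer: $\Gamma$ always contains $\Z/3$-reversing elements (for instance $-\id$, or reflections in $6$-roots whenever they exist), yet most classes with $\rho+d\le 12$ are chiral. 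For the same reason your ``rank heuristic'' fails: root reflections lie in the reflection group $W$ and can never certify achirality, and the true border does not follow any simple rank count --- it sits at $\rho+d=14$ in most rows of Table~\ref{odd-lattices}, behaves differently in the row $\rho=d$ (where $t=2$, i.e.\ $\rho=d=4$, is already achiral), and at $(\rho,d)=(8,4)$ the parity of $\discr_2$ decides between chiral and achiral, as established by Lemma~\ref{4lattices} versus Proposition~\ref{4-roots-method}. Any correct argument must therefore engage with the chamber symmetries (or a substitute for them), which is precisely what your outline omits.
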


A complete
description of the pure deformation classes is presented in Table \ref{chirality-all},
where
the coarse deformation classes are
marked by letters $c$ and $a$:  by $c$, if the class is chiral, and by $a$, if it is achiral.
We use $\rho$ and $d$ as Cartesian coordinates
and employ bold letters to indicate even parity, while keeping normal letters for odd.
For some pairs $(\rho, d)$ there exist two
coarse deformation classes, one with even discriminant form, and another with odd, and in  this case, we put the even one in brackets.

\begin{table}[h]
\caption{Pure deformation classification via chirality}\label{chirality-all}
\resizebox{\textwidth}{!}{
\begin{tabular}{cccccccccccccccccccccc}
\boxed{\bf d}&&&&&&&&&&&&&&&&&&&&&\\
11&&&&&&&&&&a&&&&&&&&&&&\\
10&&&&&&&&&a&&a({\bf a})&&&&&&&&&&\\
9&&&&&&&&a&&a&&a&&&&&&&&&\\
8&&&&&&&a({\bf a})&&a&&a({\bf a})&&a&&&&&&\\
7&&&&&&a&&a&&a&&a&&a&&&&&&&\\
6&&&&&a&&a({\bf a})&&a&&a({\bf a})&&a&&a&&&&\\
5&&&&a&&c&&a&&a&&a&&a&&a&&&\\
4&&&a&&c&&c({\bf a})&&a&&a({\bf a})&&a&&a({\bf a})&&a&&&\\
3&&c&&c&&c&&c&&a&&a&&a&&a&&a&&\\
2&c&&c({\bf c})&&c&&{\bf c}&&c&&a({\bf a})&&a&&\bf a&&a&&a({\bf a})&\\
1&&c&&c&&&&&&c&&a&&&&&&a&&a\\
0&&&{\bf c}&&&&&&&&{\bf c}&&&&&&&&{\bf a}&\\
\\
&2&3&4&5&6&7&8&9&10&11&12&13&14&15&16&17&18&19&20&21&22\ \ \boxed{\bf \rho}\\
\end{tabular}}
\end{table}

In fact,
%the values of $\rho$ and $d$ determine the topology of the real locus of the cubic fourfold and are determined by it.
% Namely,
for all but one pairs $(\rho,d)$,
%except one
the real locus of a cubic fourfold is diffeomorphic to $\Rp4 \# a(S^2\times S^2) \# b(S^1\times S^3)$, where $a=\frac12 (\rho-d), b= \frac12 (22-\rho-d)$.
The exception is $(\rho,d, \text{parity})=(12,10,\text{even})$, in which case the real locus
is diffeomorphic to $\Rp4 \sqcup S^4$
(see \cite{topology}).
Comparing this with Table \ref{chirality-all} we come to the following conclusion.

\begin{cor}
Chirality of a cubic $X\subset P^4$ is determined by the topological type of its real locus $X_\R$
unless $X_\R=\Rp4 \# 2(S^2\times S^2) \# 5(S^1\times S^3)$, or equivalently, $(\rho,d)=(8,4)$.
If $(\rho,d)=(8,4)$, then $X$ is achiral in the case of even parity, and chiral in the case of odd. \qed
\end{cor}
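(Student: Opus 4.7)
The corollary is a direct consequence of Theorem \ref{main} combined with the topological classification recalled immediately above it, so the plan is to reduce everything to an inspection of Table \ref{chirality-all}.

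First, I would recall the topological dictionary: by \cite{topology}, the pair $(\rho,d)$ determines the diffeomorphism type of $X_\R$ through the connected-sum formula with $a=\tfrac12(\rho-d)$ and $b=\tfrac12(22-\rho-d)$, with the sole exception that the even-parity class at $(\rho,d)=(12,10)$ has real locus $\Rp4\sqcup S^4$. Consequently, two coarse deformation classes yield the same topological type of $X_\R$ precisely when either they share a common pair $(\rho,d)\ne(12,10)$, or they coincide with the isolated even-parity class at $(12,10)$.

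Next, I would identify all pairs $(\rho,d)$ that support two coarse classes of distinct parity; in Table \ref{chirality-all} these are exactly the entries written in bracketed form $c(\mathbf{c})$, $a(\mathbf{a})$, or $c(\mathbf{a})$. A line-by-line scan shows that both parity variants share the same chirality in every such pair save for $(\rho,d)=(8,4)$, where the odd-parity class is chiral and the even-parity class achiral. By the topological dictionary, this pair corresponds precisely to the locus $\Rp4\#2(S^2\times S^2)\#5(S^1\times S^3)$ singled out in the statement, and this gives both the exception and its resolution (odd parity $\Leftrightarrow$ chiral, even parity $\Leftrightarrow$ achiral).

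Finally, I would dispose of $(\rho,d)=(12,10)$: both parity variants are marked achiral in the table, while their topologies differ ($\Rp4\#(S^2\times S^2)$ in the odd case, $\Rp4\sqcup S^4$ in the even case), so chirality is trivially determined by topology in each of these two types. Since the whole argument is a table look-up, the only real obstacle is bookkeeping --- checking that no pair $(\rho,d)$ with two parity variants has been overlooked and that the topology formulas cleanly separate the $(12,10)$ case from the rest --- and this bookkeeping has effectively been done already in the construction of Table \ref{chirality-all}.
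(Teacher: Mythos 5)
Your proposal is correct and follows essentially the same route as the paper, which simply invokes the topological dictionary from \cite{topology} (the $(\rho,d)$-to-topology formula with the single exception at $(12,10,\text{even})$) and then reads off Table \ref{chirality-all}, noting that $(8,4)$ is the only pair carrying two parity variants of different chirality. Your extra bookkeeping --- checking all bracketed entries and disposing of the $(12,10)$ case separately --- is exactly the comparison the paper leaves implicit.
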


\subsection{Overview}
Our methods are based on the lattice technique developped in \cite{chirality} for analysis of chirality.
The power of this technique was demonstrated in \cite{chirality}  by treating the case of maximal and submaximal cubics (overall 9 coarse deformation classes and, respectively, 14 pure deformation classes). However, the methods used there were not sufficient to treat efficiently all the remaining coarse deformation classes (66 ones).
In this paper we develop further this lattice technique and implement several new elements.

First of all, it consisted in elaborating of some sufficient conditions for a sublattice or a superlattice to inherit achirality (see Section \ref{reductions-extensions}). This allowed us not only to reduce essentially the number of lattices to be analyzed, but also to replace a number of lattices of high complexity by lattices of much lower complexity.
However, two of these lattices still involved
essential complications
because of a very large, and probably even infinite, volume of the corresponding fundamental polyhedra.
To deal with these two cases, we make use in Section \ref{4-roots-added}
of
a trick allowing to reduce the size of the fundamental polyhedron by enlarging the reflection group and in the same time to keep control
on necessary information on the symmetries of the initial polyhedron (see Section \ref{4-roots-added}).

The paper is organized as follows. In Section 2 we review and develop further the
methods of \cite{chirality}.
 The proof of the main results is divided into two parts respectively to the parity of the
2-primary part of the discriminant of the lattice $\M^0_+(X)$: in Section 3 we treat the even case, and in Section 4 the odd case.
Our strategy there is, first, to use the results of  Section \ref{reductions-extensions} to reduce the study of chirality to some smaller generating set of lattices and,
second, to apply the same approach as in \cite{chirality} which is based on analysis
of symmetries of fundamental polyhedra of the group generated by reflections
determined by 2- and 6-roots of the lattice.
In Section 5 we pose some related chirality questions and partially respond to them.
In particular, in Section \ref{strong} we note that the methods developed open a way to attack
other open problems: {\it constructing maximal real projective
cubic threefolds in all dimensions, and performing deformation classification of real affine cubic threefolds
transversal to the infinity hyperplane}.

{\it Acknowledgements.} The main part of this research was completed during the first author's visit to Universit\'e de Strasbourg. The last touch was made during Research
in Pairs in Mathematisches Forschungsinstitut Oberwolfach. 
We thank these institutions for their hospitality. 
%We also wish to thank the referee for helpful remarks and suggestions.

The second author was partially funded by the grant ANR-18-CE40-0009 of {\it Agence Nationale de Recherche}.
\vskip5mm

%%%%%%%%%%%%%%%     Section 2
\section{Preliminaries}\label{prelim}
\subsection{Notation} This work is a continuation of \cite{chirality}. Since we make use of the results obtained there repeatedly,
we preserve the notation of that paper. In particular, as in \cite{chirality},
by $A_n, D_n, E_n$ we denote the standard classical positive definite lattices and
use the sign $+$ for orthogonal direct sum.

\subsection{Lattices under consideration}\label{lattice-notations}
Consider a non-singular cubic fourfold $X\subset P^5$.
As is known, there exists a lattice isomorphism
between $\M(X)=H^4(X)$ and $\M=3I+2U+2E_8$ which sends the
polarization class $h(X)\in \M(X)$ to $h=(1,1,1)\in 3I$. It follows then that
the primitive sublattice $\M^0(X)=\{x\in \M(X)\,|\,xh=0\}$ is
isomorphic to $\M^0=A_2+2U+2E_8$.

For $X$ defined over the reals, the complex conjugation
$\conj :X\to X$ induces a lattice involution $\conj^* :\M(X)\to \M(X)$ such that
$\conj^*(h)=h$ and, hence, induces also a lattice involution in
$\M^0(X)$. We denote by $\M_\pm^0(X)$ and $\M_\pm(X)$ the
eigen-sublattices $\{x\in \M^0(X)\, |\,
\conj^*(x)=\pm x\}$ and
$\{x\in \M(X)\, |\,
\conj^*(x)=\pm x\}$, respectively. We have
obviously $\M_-=\M_-^0$ and
$\sigma_-(\M_+(X))=\sigma_-(\M_+^0(X))$, where  $\sigma_-$ denotes
the negative index of inertia (i.e., the number of negative
squares in a diagonalization).

It was shown in \cite[Theorem 4.8]{deformation} that
two non-singular real
cubic fourfolds,
$X$ and $Y$, are
coarse deformation equivalent if and only if the lattices $\M_-(X)$ and $\M_-(Y)$ are isomorphic.
In terms of $\M^0_+$  this criterion can be translated as follows.

 \begin{theorem}\label{deformation-theorem}
The coarse deformation class of a real non-singular cubic fourfold $X$ is determined by the following three invariants:
the rank $\rho$ of the lattice $\M_+^0(X)$, the rank
$d$ of the 2-primary part $\discr_2 \M_+^0(X)$ of $\discr\M_+^0(X)$, and the type, even or odd, of  $\discr_2 \M_+^0(X)$.
\end{theorem}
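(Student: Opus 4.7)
The plan is to combine \cite[Theorem 4.8]{deformation}, which reduces coarse deformation equivalence of real non-singular cubic fourfolds to abstract isomorphism of the lattice $\M_-(X)$, with Nikulin's classification of indefinite even 2-elementary lattices. The latter asserts that such a lattice is determined up to isometry by its signature, the rank of its (2-elementary) discriminant group, and the parity of the discriminant form. Thus it suffices to check that $\M_-$ is always 2-elementary and to read off its Nikulin invariants from the triple $(\rho,d,\delta)$.

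To show $\M_-$ is 2-elementary, I would use the standard decomposition $2x=(x+\conj^* x)+(x-\conj^* x)$ for $x\in\M$, which forces $\M/(\M_+\oplus\M_-)$ to be 2-elementary. Since $\M$ is unimodular, a routine gluing argument identifies both $\discr\M_+$ and $\discr\M_-$ with this quotient (up to a sign of the discriminant form), so both are $\Z/2$-vector spaces. As $\M^0=A_2+2U+2E_8$ is even, its sublattice $\M_-=\M_-^0$ is even as well.

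Next I would identify the Nikulin invariants of $\M_-$ in terms of $(\rho,d,\delta)$. The signature is $(21-\rho,1)$: the rank follows from $\rk\M^0=22$ and the orthogonality of $\M_+^0$ and $\M_-$ in $\M^0$, while $\sigma_-(\M_-)=1$ is a known consequence of the Hodge structure on a real cubic fourfold (its period vector provides one negative direction in $\M_-\otimes\R$), compatible with the recorded identity $\sigma_-(\M_+)=\sigma_-(\M_+^0)$ and the total signature $(21,2)$. The 2-rank and parity of $\discr\M_-$ coincide with those of $\discr_2\M_+^0$: indeed, $\discr\M_+\cong\discr\M_-$ as 2-elementary groups (up to sign of the form, which preserves parity) by the unimodular gluing, and the inclusion $\Z h\oplus\M_+^0\subset \M_+$ has index $1$ or $3$, so any glueing is localized in the 3-primary part of $\Z/3\oplus\discr\M_+^0$ and does not alter the 2-primary part. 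Nikulin's theorem then yields the isomorphism class of $\M_-$ from $(\rho,d,\delta)$, completing the proof.

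The subtlest step is controlling the 3-primary part of $\discr\M^0\cong\Z/3$ coming from the $A_2$ summand: for $\M_-$ to be genuinely 2-elementary, this $\Z/3$ must be absorbed entirely into the $\M_+^0$ side of the orthogonal decomposition $\M_+^0\oplus\M_-\subset\M^0$. I expect this to follow from the fact that $\conj^*$ fixes $h$ and hence acts trivially on the 3-primary gluing used to recover the unimodular lattice $\M$ from $\Z h\oplus\M^0$, which in turn forces the entire 3-part of $\discr\M^0$ to reside in the $+1$-eigenspace. Only with this 3-part bookkeeping settled does Nikulin's classification of indefinite 2-elementary lattices become applicable to $\M_-$.
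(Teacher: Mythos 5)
Your proposal is correct and takes essentially the same route as the paper: reduce coarse deformation equivalence to the isomorphism class of $\M_-(X)$ via \cite[Theorem 4.8]{deformation}, invoke Nikulin's uniqueness theorem for even $2$-elementary hyperbolic lattices, and identify its invariants through $\rk\M_-=22-\rho$ and $\discr_2\M_-(X)=-\discr_2\M_+^0(X)$, with your extra details (2-elementarity from unimodularity of $\M$, the index-3 gluing of $\Z h+\M_+^0$ inside $\M_+$) merely making explicit what the paper leaves implicit. The only comment is that the ``subtlest step'' in your last paragraph is already settled by your unimodular-gluing argument, so no separate treatment of the 3-primary part is needed beyond what you did when comparing $\discr_2\M_+$ with $\discr_2\M_+^0$.
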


\begin{proof} As it follows from Nikulin's uniqueness theorem \cite[Th. 3.6.2]{stability},
the isomorphism class of the lattice $\M_-(X)$, as of any even 2-elementary hyperbolic lattice,
is determined by its rank, the rank of its discriminant form, and the parity of the latter.
Hence, there remain to notice that $\discr_2\M_-(X)=-\discr_2\M_+^0(X)$.
\end{proof}

To determine the isomorphism class of $\M_+^0(X)$ using exclusively these three numerical invariants, we use the following uniqueness statement.

\begin{proposition}\label{determination} Let $\LL$  and $ \LL'$ be even non-degenerate lattices
whose discriminant groups split into direct sums of groups of exponents 2 and 3.
If $\sigma_-(\LL)=\sigma_-(\LL')=1$, $\rk \LL=\rk \LL'$, $\discr_3\LL=\discr_3\LL'=\discr_3 \langle 6\rangle $, $\rk \discr_2 \LL=\rk \discr_2 \LL'$, and both $\discr_2\LL, \discr_2\LL'$
are of the same parity, then $\LL$ and $\LL'$ are isomorphic.
\end{proposition}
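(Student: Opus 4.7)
The plan is to reduce the proposition to Nikulin's uniqueness theorem for even indefinite lattices, which asserts that within its genus an even indefinite lattice is determined by its signature and its discriminant form (under the standard rank hypothesis $\rk \LL \geq 2 + \ell(\discr \LL)$, where $\ell$ denotes the minimal number of generators). Since $\sigma_-(\LL)=\sigma_-(\LL')=1$ and $\rk \LL=\rk \LL'$, the signatures of $\LL$ and $\LL'$ already coincide, so the remaining task is to prove that the discriminant forms $\discr \LL$ and $\discr \LL'$ are isomorphic.

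To this end I would exploit the orthogonal splitting $\discr \LL = \discr_2 \LL \oplus \discr_3 \LL$ of the discriminant form, each summand being elementary abelian by assumption. The $3$-primary parts $\discr_3 \LL = \discr_3 \LL' = \discr_3 \langle 6 \rangle$ coincide by hypothesis. For the $2$-primary components, I would invoke the classical classification of finite quadratic forms on $(\Z/2)^d$: such a form is determined up to isomorphism by the triple (rank, parity, Brown invariant modulo $8$). Ranks and parities agree by assumption, so matching the Brown invariants is all that is left.

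That matching I would obtain from Milgram's formula
\begin{equation*}
\sum_{x \in A} e^{\pi i\, q(x)} \;=\; \sqrt{|A|}\, e^{\pi i (\sigma_+ - \sigma_-)/4},
\end{equation*}
whose left-hand side factors as a product over the orthogonal decomposition $q = q_2 \oplus q_3$. Since $\LL$ and $\LL'$ share both the total signature $\sigma_+ - \sigma_-$ and the $3$-primary summand $q_3$, Milgram's formula forces the Gauss sums of $q_2(\LL)$ and $q_2(\LL')$ to coincide, and hence also their Brown invariants. Thus $\discr_2 \LL \cong \discr_2 \LL'$, so $\discr \LL \cong \discr \LL'$, and Nikulin's uniqueness theorem then yields $\LL \cong \LL'$.

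The main obstacle I anticipate is verifying Nikulin's rank condition $\rk \LL \geq 2 + \ell(\discr \LL) = 2 + \max(d, 1)$, which can fail in boundary cases of small $\rho$ combined with relatively large $d$. In such cases one would need to supplement the general uniqueness statement with a direct genus analysis, examining the finitely many lattices of the given genus via their local invariants at $2$ and $3$ and ruling out non-isomorphic representatives by hand, or by realizing $\LL$ as a primitive sublattice of a fixed ambient lattice.
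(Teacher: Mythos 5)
Your main line of argument coincides with the paper's: both reduce the statement to Nikulin's uniqueness theorem for indefinite even lattices, and your preliminary step --- that the hypotheses already force $\discr\LL\cong\discr\LL'$, via the classification of $2$-elementary finite quadratic forms by rank, parity and signature mod~$8$ together with Milgram's formula --- is correct, and it makes explicit a point the paper leaves implicit when it passes directly to Nikulin's theorem.

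The genuine gap is that you stop exactly where the substantive work lies. Writing $d=\rk\discr_2\LL$, the bound $\rk\LL\ge 2+\ell(\discr\LL)=2+\max(d,1)$ fails not merely in a few marginal cases: it fails for every lattice with $d=\rk\LL$ or $d=\rk\LL-1$ and for all lattices of rank $\le 2$, and these constitute a sizable portion of the lattices to which the proposition is actually applied (for instance $-A_1+\la6\ra+tA_1$, $-A_1+A_2+tA_1$, $U(2)+E_6(2)$, and the rank-$2$ lattices $U+A_2$, $U(2)+A_2$, $-A_1+\la6\ra$). Your proposed fallback --- examining the lattices of the given genus ``via their local invariants at $2$ and $3$'' --- cannot work as stated, because lattices in one genus have identical local invariants; deciding that the genus contains a single class is precisely the issue. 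The paper settles these cases by concrete devices: rank $1$ is trivial; rank $2$ is handled by reduction theory of binary forms (using $\vert\det\LL\vert\le 6$ when $d\le 1$, and dividing the form by $2$ when $d=2$); when $d=\rk\LL\ge 3$ the lattice is divisible by $2$, so one divides the quadratic form by $2$ and applies Nikulin's Theorem~1.14.2 to the rescaled lattice if it is even, and Theorem~1.16.10 if it is odd; and for $d=\rk\LL-1$ one needs the full (refined) hypotheses of Theorem~1.14.2 --- splitting off $u^{(2)}(2)$, $v^{(2)}(2)$, or $q_\theta^{(p)}(p)$ from the discriminant form --- rather than the crude rank bound you quote. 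As written, your argument establishes the proposition only in the range $\rk\LL\ge d+2$, and the remaining cases are left as an unexecuted plan.
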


\begin{proof}
It is trivial for lattices of rank $1$. For lattices of rank $2$, it follows from classification of binary integral quadratic forms (see, e.g., \cite[Section 15]{CS})
by passing to a reduced form
(using that $\vert \det\LL\vert \le 6$ if $\rk\discr_2\LL\le 1$ and dividing the quadratic form
of lattices by $2$ if $\rk\discr_2\LL=2$). For lattices of rank $\rk\LL\ge 3$ with $\rk\discr_2\LL < \rk \LL$, the claim of proposition follows from Nikulin's theorem \cite[Theorem 1.14.2]{stability}. In the remaining case, $\rk\discr_2\LL =\rk\LL \ge 3$ it is sufficient to divide the quadratic form of lattices by $2$ and apply the same Nikulin's theorem
if the obtained integral lattice is even, and \cite[Theorem 1.16.10]{stability} otherwise.
\end{proof}

Following \cite{chirality},  by $\Aut^+(\M^0)$ we denote the group of
those automorphisms of $\M^0$ which preserve a simultaneous orientation of negative definite planes in $\M^0$, and  put
$\Aut^-(\M^0)=\Aut(\M^0)\setminus\Aut^+(\M^0)$.

We call a lattice involution $c:\M\to \M$ {\it geometric } if
$c(h)=h$ and $\sigma_-(\M_\pm^0(c))=1$, where $\M_\pm^0(c)$
denotes the eigen-sublattices $\{x\in \M^0 \,|\,c(x)=\pm x\}$. Let
us note that all geometric involutions preserve $\M^0$ and the
involutions induced in $\M^0$ belong to $\Aut^-(\M^0)$.
As was shown
in \cite[Lemma 3.1.1 and Theorem 3.1.2]{chirality}, a
pair $(c:\M\to \M, h\in\M)$
is isomorphic to
a pair $(\conj^* :\M(X)\to \M(X), h(X)\in \M(X))$ for
some non-singular real
cubic fourfold
$X$ if and only if $c$ is a geometric involution.

 A pair $(c:\M\to \M,
h\in \M)$ isomorphic to  $(\conj^* :\M(X)\to \M(X), h(X))$ is
called the {\it homological type} of $X$.

\subsection{Lattice characterization of chirality}\label{lattice-characterization}
In what follows $\LL$ is an even lattice of signature $(n,1)$,
$n\ge1$, whose discriminant
$\discr(\LL)$ splits as $\discr_2(\LL)+\discr_3(\LL)$, where
$\discr_2(\LL)$ is a
%$2$-periodic
group of period 2, and $\discr_3(\LL)=\Z/3$.

We let $\LL_\R=\LL\otimes\R$ and consider the cone
$\Upsilon=\{p\in \LL_\R\,|\,p^2<0\}$, together with the associated hyperbolic spaces
$\L=\Upsilon/\R^*$
and $\L^\#=\Upsilon/\R_+$ where
$\R^*=\R\smallsetminus\{0\}$ and $\R_+=(0,\infty)$.
In this
context, given $v\in \LL$ with $v^2>0$ we use notation $H_v$ for the hyperplane $\{p\in
\LL_\R\,|\,vp=0\}$ and $H_v^\pm$ for the half-spaces $\{p\in
\LL_\R\,|\,\pm vp\ge0\}$. For $p\in\Upsilon$, $H_v$, etc., we
use notation $[p]\in\L$, $[H_v]\subset\L$, $[p]^\#\in\L^\#$,
$[H_v]^\#\subset\L^\#$, etc., for the corresponding object after
projectivization.

We associate with each $v\in \LL$, $v^2>0$,
the reflection $R_v : \LL\otimes \Q\to \LL\otimes \Q, x\mapsto x-2\frac{xv}{v^2}v$, across the
hyperplane $H_v$.
It preserves the lattice $\LL$ invariant and belongs to the automorphism group
$\Aut(\LL)$  if
$v^2=2$, or
if $v^2=6$ and $xv$ is divisible by $3$ for all $x\in
\LL$.
We call such lattice elements
{\it 2-roots} and {\it 6-roots}, denote their sets by $V_2$ and $V_6$, and let $\Phi=V_2\cup V_6$.

Reflections $R_v$, $v\in\Phi$, generate
the {\it reflection group} $W\subset\Aut(\LL)$ which, as known (see, \eg, \cite{borel}), acts discretely in
both $\L=\Upsilon/\R^*$
and $\L^\#=\Upsilon/\R_+$.
The hyperplanes
$[H_v]$ (respectively $[H_v]^\#$), $v\in\Phi$, form a locally finite arrangement
cutting $\L$
(respectively $\L^\#$) into open polyhedra, whose closures are
called the {\it cells}. The cells in $\L$ (respectively in $\L^\#$) are the fundamental
chambers of $W$.
%, and the pairs of opposite cells in $\L^\#$ are the fundamental chambers of $W^\#$.

A cell $P\subset\L$ being fixed, the group $\Aut(\LL)$
splits into a semi-direct product $W\rtimes\Aut(P)$, where
$\Aut(P)=\{g\in\Aut(\LL)\,|\,
g(P)=P\}$ is the stabilizer of $P$.

The preimage of $P$ in $\L^\#$ is the union of a pair of
%$c$-
cells, $P^\#$ and $-P^\#$. Each $g\in\Aut(P)$
either permutes
%this pair of cells,
$P^\#$ and $-P^\#$, and then we say that it is
{\it $P$-reversing}, or it preserves both $P^\#$ and
$-P^\#$, and then we call it {\it $P$-direct}. The
subgroup of $\Aut(P)$ formed by $P$-direct elements will
be denoted by $\Aut^+(P)$, while the coset of
$P$-reversing elements will be denoted by $\Aut^-(P)$.

%In the case of lattices $\M_+^0$,
An additional characteristic of
$g\in\Aut(\LL)$ is its 3-primary component, $\delta_3(g)\in\Aut(\Z/3)$,
which may be trivial or not.
We say that
$g\in\Aut(\LL)$ is {\it $\Z/3$-direct} if $\delta_3(g)=\id$, and
{\it $\Z/3$-reversing}  if $\delta_3(g)\ne\id$
(that is $\delta_3(g)=-\id$).

The following theorem is an equivalent reformulation of Theorem 4.4.1 in \cite{chirality}.

 \begin{theorem}\label{chirality-theorem}
 A non-singular real cubic fourfold
$X$
%of homological type $c$
is achiral if and only if the lattice
$\M^0_+(X)$ admits
a $\Z/3$-reversing automorphism $g\in\Aut^+(P)$ for some {\rm (}or equivalently, for any{\rm )} of the cells $P$ of $\L$.
% of $\Lambda_+(c)$.
\qed
\end{theorem}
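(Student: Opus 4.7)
The plan is to deduce this theorem from \cite[Theorem~4.4.1]{chirality}, which characterizes achirality of $X$ by the existence of an automorphism $g\in\Aut(\M^0)$ commuting with $c=\conj^*$, preserving $h$, lying in $\Aut^+(\M^0)$, and whose 3-primary component $\delta_3(g)$ is non-trivial. The task is then to show that such a $g$ exists if and only if its restriction $g_+=g|_{\M^0_+}$ can be chosen with the $P$-direct and $\Z/3$-reversing properties stated here.

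First I would set up the descent/lifting between $\M^0$ and $\M^0_+(X)$. Commutation with $c$ forces $g$ to split over $\Q$ as $g_+\oplus g_-$ along $\M^0\otimes\Q=(\M^0_+\otimes\Q)\oplus(\M^0_-\otimes\Q)$, and integrality of $g$ is equivalent, via Nikulin's gluing, to preservation by $g_+\oplus g_-$ of the isotropic glue $H=\M^0/(\M^0_+\oplus\M^0_-)\hookrightarrow \discr\M^0_+\oplus\discr\M^0_-$. Because $\M^0_-=\M_-$ is $2$-elementary and hyperbolic, $\Aut(\M^0_-)$ surjects onto $\Aut(\discr\M^0_-)$ (a standard consequence of Nikulin's methods), so any $g_+\in\Aut(\M^0_+)$ will lift to some $g\in\Aut(\M^0)$ commuting with $c$; moreover $-\id_{\M^0_-}$ acts trivially on $\discr\M^0_-$ (all its elements being $2$-torsion), so one can freely multiply $g_-$ by $-\id$ without breaking the gluing.

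Next I would translate each of the three remaining conditions. The 3-primary condition is immediate since $\discr_3\M^0_-=0$ and hence $\discr_3\M^0=\discr_3\M^0_+=\Z/3$ canonically, giving $\delta_3(g)=\delta_3(g_+)$. The condition $g\in\Aut^+(\M^0)$ concerns the orientation of a maximal negative definite $2$-plane of $\M^0_\R$, which factors as the product of the one-dimensional negative directions of $\M^0_+\otimes\R$ and $\M^0_-\otimes\R$; hence $g\in\Aut^+(\M^0)$ if and only if $g_+$ and $g_-$ agree on the sign on these directions. The sign of $g_+$ on its negative direction is exactly the $P$-direct/$P$-reversing dichotomy, because a chamber $P^\#\subset\L^\#$ singles out one connected component of the cone $\Upsilon\subset\M^0_+\otimes\R$; combined with the freedom to multiply $g_-$ by $-\id$, this yields $g\in\Aut^+(\M^0)\Longleftrightarrow g_+\in\Aut^+(P)$.

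The parenthetical equivalence ``for some $P$'' $\Leftrightarrow$ ``for any $P$'' will follow from transitivity of $W$ on cells: for any $w\in W$, the conjugate $wg_+w^{-1}$ lies in $\Aut^+(w(P))$ and has $\delta_3(wg_+w^{-1})=\delta_3(g_+)$, since $\delta_3$ takes values in the abelian group $\Aut(\Z/3)=\{\pm\id\}$. The hardest part will be the lifting step: one must verify that the surjectivity $\Aut(\M^0_-)\twoheadrightarrow\Aut(\discr\M^0_-)$ provided by the $2$-elementary structure really suffices to realize each desired $g_+$, and that the remaining freedom in $g_-$ includes the sign adjustment on its negative direction used in the orientation analysis.
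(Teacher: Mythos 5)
There is a genuine gap, and it starts with the source statement you build on. The paper does not prove this theorem at all: as stated in the text right before it, it \emph{is} Theorem 4.4.1 of \cite{chirality}, merely rewritten in the terminology of Section 2.3 (cells $P$, $\Aut^\pm(P)$, $\Z/3$-reversing); that theorem is already formulated for the eigenlattice $\M^0_+(X)$, its $2$- and $6$-reflection group and the symmetries of a fundamental polyhedron, and the $P$-direct condition comes out of the period/monodromy analysis carried out there, not out of lattice gluing. Your proposal instead posits a different statement on the level of $(\M,h,c)$ (``there exists $g\in\Aut^+(\M^0)$ commuting with $c$, preserving $h$, with $\delta_3(g)\ne\id$'') and tries to translate it. That guessed criterion is in fact empty: since $\Z h+\M^0\subset\M$ has index $3$ with a $3$-primary glue vector (e.g.\ $(1,0,0)=\tfrac13\bigl(h+(2,-1,-1)\bigr)$), every automorphism of $\M$ fixing $h$ acts trivially on $\discr\M^0\cong\Z/3$, and since the glue between $\M^0_+ + \M_-$ and $\M^0$ is $2$-primary, its restriction to $\M^0_+$ is automatically $\Z/3$-direct. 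Your gluing bookkeeping tracks only the $2$-primary glue between the eigenlattices and never the $3$-primary glue with $\Z h$, which is exactly where this obstruction lives.

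Independently of that, the translation loses the essential chamber condition. ``$g\in\Aut^+(\M^0)$'' only controls whether $g_+$ preserves the two components of the negative cone, and once you allow yourself to multiply $g_-$ by $-\id$ (trivial on the $2$-elementary $\discr\M_-$), it imposes no condition on $g_+$ whatsoever; your criterion would then descend to ``$\M^0_+(X)$ has some $\Z/3$-reversing automorphism'', which holds for every lattice in question (take $-\id$, or the reflection in any $6$-root), so every cubic would come out achiral, contradicting the $18$ chiral classes of Theorem \ref{main}. The point is that $g_+\in\Aut^+(P)$ means $g_+$ stabilizes a fundamental polyhedron of $W$ together with its lift $P^\#$; this is strictly stronger than preserving the cone components, and nothing in your $\M^0$-level hypotheses forces $g_+$, after writing it as $w f$ with $w\in W$, $f\in\Aut(P)$, to have $\delta_3(f)\ne\id$ (note that $6$-reflections in $W$ themselves reverse $\discr_3$). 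So the chamber condition cannot be conjured from the gluing/orientation data; it has to be taken from \cite[Theorem 4.4.1]{chirality} itself, which is what the paper does. (Your closing remark on ``for some $P$ iff for any $P$'' via conjugation by $W$ is fine, but it is a minor point.)
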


\subsection{Chirality of a lattice}
Let us pick a cell $P\subset\L$ and fix a covering cell
$P^\#\subset\L^\#$. Choosing any vector $p\in\Upsilon$ so that
$[p]^\#$ lies in the interior of $P^\#$, we let
 $\Phi^\pm=\{v\in \Phi|\pm vp>0\}$. The minimal subset $\Phi^b\subset\Phi^-$
such that $P^\#=\cap_{v\in\Phi^b}\,[H_v^-]^\#$ is called the {\it
basis of $\Phi$ defined by $P^\#$}.
The hyperplanes $[H_v]$,
$v\in\Phi^b$, support $n$-dimensional faces of $P$.
%and will be called the {\it walls of $P$}.
Note that any $v\in \Phi^-$ is a
linear combination of the roots in $\Phi^b$ with non-negative
coefficients.

Theorem \ref{chirality-theorem} motivates the following definitions. We call an automorphism
of $\LL$ \emph{achiral} if it is $\Z/3$-reversing and $P$-direct for some cell $P$. Respectively,
a lattice $\LL$ is called \emph{achiral} if it admits an achiral automorphism, and called {\it chiral} otherwise.

By definition, the  {\it
Coxeter graph} $\G$ of $\LL$ has $\Phi^b$ as the vertex set. The vertices
of $\G$ are colored: $2$-roots are white and $6$-roots are black. The
edges are weighted: the weight of an edge connecting vertices
$v,w\in\Phi^b$ is $m_{vw}=4\frac{(vw)^2}{v^2w^2}$, and $m_{vw}=0$
means absence of an edge. These weights are non-negative integers,
because $2\frac{ vw}{v^2},2\frac{vw}{w^2}\in\Z$, and $v^2,w^2>0$
for any $v,w\in\Phi^b$. In the case of $m_{vw}=1$, the angle
between $H_v$ and $H_w$ is $\pi/3$, and $v^2=w^2$; such edges are
not labelled.
 The case of $m_{vw}=2$ (which corresponds to angle $\pi/4$) cannot
happen, since $v^2,w^2\in\{2,6\}$. An edge of weight $m_{vw}=3$
connects always a $2$-root with a $6$-root; it corresponds to
angle $\pi/6$, and will be labelled by $6$.
 The case of $m_{vw}=4$ corresponds to parallel hyperplanes in $\L$, and we sketch a
thick edge between $v$ and $w$. If $m_{vw}>4$, then  the
corresponding hyperplanes in $\L$ are ultra-parallel (diverging),
and we sketch a dotted edge.

For a subset $J\subset \Phi^b$ we may also consider the polyhedron
$P^\#(J)=\cap_{v\in J}\,[H_v^-]^\#$
and the subgraph
$\G_J$ of $\G$ spanned by $J$.
%that is formed by the vertex set $J$ and all the edges of $\G$ connecting these vertices.
We say that $\G_J$ is the {\it Coxeter's graph of $J$}.
A permutation $\s :J\to J$ will be called a {\it symmetry of
$\G_J$} if it preserves the weight of edges and the length of the
roots, i.e., $(\s(v))^2=v^2$ and $m_{\s(v)\s(w)}=m_{vw}$ for all
$v,w\in J$.

\begin{theorem}\label{existence of symmetry}\cite{chirality}
Let a cell $P\subset\L$ and a covering cell
$P^\#\subset\L^\#$ be fixed.
Then each $P$-direct automorphism of $\LL$ permutes the elements of $\Phi^b$
and yields a symmetry of $\G$.
%Assume that
Conversely, if a subset
$J\subset\Phi^b$ spans $\LL$ over $\Z$,
%. Then
then any symmetry
%$\s :J\to J$
of $\G_J$ is induced by
%an automorphism of
a unique $P$-direct automorphism of
%the lattice
$\LL$.
% which preserves the cell $P^\#$ invariant.
\qed
\end{theorem}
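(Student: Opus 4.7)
Forward direction: If $g\in\Aut^+(P)$, then $g(P^\#)=P^\#$, so $g$ permutes the walls of $P^\#$. By the minimality in the definition of $\Phi^b$, the walls of $P^\#$ are in bijection with $\Phi^b$ via $v\mapsto [H_v]^\#\cap P^\#$; hence $g$ permutes $\Phi^b$. Being an isometry of $\LL$, $g$ preserves the length $v^2$ (mapping $V_2$ to $V_2$ and $V_6$ to $V_6$) and the weights $m_{vw}=4(vw)^2/(v^2w^2)$, so it yields a symmetry of $\G$.

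Converse: I plan to extend $\sigma\colon J\to J$ to a $\Q$-linear map $g$ on $\LL\otimes\Q$, check that $g\in\Aut(\LL)$, and then verify that $g$ is $P$-direct. The crucial sign rigidity underlying the extension is that since $v,w\in\Phi^b$ are outward normals to the Coxeter chamber $P^\#$ with interior angles of the form $\pi/m$, $m\ge 2$ (or parallel/ultra-parallel in the hyperbolic sense), one always has $vw\le 0$; combined with $|vw|=\tfrac12\sqrt{m_{vw}\,v^2w^2}$, this shows that $\sigma$ preserves all inner products on $J$. Non-degeneracy of $\LL$ then ensures that every linear relation among elements of $J$ is detected by the Gram matrix, so $\sigma$-invariance of this matrix produces a well-defined $\Q$-linear extension $g$ preserving the form. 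Since $J$ generates $\LL$ over $\Z$ and $\sigma$ permutes $J$, we get $g(\LL)=\LL$, hence $g\in\Aut(\LL)$.

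The only subtle point, and in my view the main obstacle, is the $P$-directness of $g$. The walls of the cell $g(P^\#)$ are $\{[H_{g(v)}]^\#:v\in\Phi^b\}$, and for $v\in J$ we have $g(v)=\sigma(v)\in J\subset\Phi^b$; so $P^\#$ and $g(P^\#)$ possess at least the $|J|$ walls indexed by $J$ in common. I would finish by invoking the general principle that two distinct full-dimensional cells of a locally finite hyperplane arrangement share at most one wall: if they shared walls in two distinct hyperplanes $H_1\ne H_2$, the two cells would lie on opposite sides of each $H_i$, forcing their intersection into the codimension-two subspace $H_1\cap H_2$, which is too small to contain a codim-1 face. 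Since $J$ spans $\LL$ over $\Z$ and $\rk\LL\ge 2$, we have $|J|\ge 2$, so necessarily $g(P^\#)=P^\#$; the same argument simultaneously rules out $g(P^\#)=-P^\#$, since $\Phi^b\cap(-\Phi^b)=\oo$. Uniqueness is then immediate: two $P$-direct automorphisms inducing the same $\sigma$ on $J$ agree on a generating set of $\LL$.
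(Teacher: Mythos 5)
Your forward direction is correct, and so is the core of your construction of the extension: the sign rigidity $vw\le 0$ for distinct $v,w\in\Phi^b$ does show that a symmetry of $\G_J$ preserves the Gram matrix of $J$, well-definedness of the $\Q$-linear extension follows from non-degeneracy, $g(\LL)=\LL$ follows from $\Z$-spanning, and uniqueness is immediate. (Note that the paper itself offers no argument to compare with: the statement is quoted from \cite{chirality}.)

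The genuine gap is exactly at the step you yourself flag as the main obstacle, the $P$-directness. What you have established is only that $P^\#$ and $g(P^\#)$ both lie in $\bigcap_{w\in J}[H_w^-]^\#$ and both have a facet \emph{supported by} $H_w$ for every $w\in J$; you have not shown that these facets coincide as sets. The ``at most one shared wall'' principle is valid only for a genuinely common facet (two distinct cells adjacent across the same codimension-one face do lie on opposite sides of its hyperplane), and it is false if ``sharing a wall'' merely means having facets on the same hyperplane: in the arrangement of the three lines $x=0$, $y=0$, $x+y=1$ in the plane, the cells $\{x\ge0,\ y\ge0,\ x+y\le1\}$ and $\{x\ge0,\ y\ge0,\ x+y\ge1\}$ are distinct, yet both have walls on $x=0$ and on $y=0$, and they lie on the \emph{same} side of each of these two lines. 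That is precisely your configuration: $P^\#$ and $g(P^\#)$ lie on the same (negative) side of every $H_w$, $w\in J$, so the ``opposite sides'' step never gets started and no contradiction follows; a priori a mirror $H_s$ with $s\in\Phi^b\sm J$ (or an image of such under $g$) could separate the two cells. Closing this requires using the reflection-group structure rather than local finiteness of the arrangement alone: for instance, write $g(P^\#)=u(\pm P^\#)$ with $u\in W$ (simple transitivity of $W$ on the cells of each sheet of $\L^\#$), note that if $u\ne\id$ then some wall of $P^\#$ separates $P^\#$ from $g(P^\#)$, and then play this off against the fact that every $w\in J$ is a simple root of \emph{both} cells while $J$ spans $\LL$ --- this comparison is the substantive content your proposal does not supply. (By contrast, excluding $g(P^\#)=-P^\#$ needs no such principle: already for a single $w\in J$ one has $g(P^\#)\subset[H_w^-]^\#$, whereas $-P^\#$ meets the open positive side of $H_w$ in its interior.)
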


When $J$ as in Theorem \ref{existence of symmetry} spans $\LL$ over $\Z$, we call
a symmetry $\sigma :J\to J$ {\it chiral} or {\it $\Z/3$-direct} ({\it achiral} or {\it $\Z/3$-reversing})
if the $P$-direct automorphism of $\LL$ induced by $\sigma$ has the corresponding property.

To recognize $\Z/3$-reversing symmetries of $\G$, one can use the
following observation. Considering some direct sum decomposition
of $\LL$, we observe that one of the direct summands, $\LL_1$, has
$\discr_3(\LL_1)=\Z/3$, while the other direct summands have
discriminants of period 2 (because $\discr(\LL)$ gets an induced
direct sum decomposition). For any vertex $w$ of $\G$ viewed as a
vector in $\LL$, we can consider its $\LL_1$-component. This leads to the following conclusion.

\begin{proposition}\label{reversing}
Let $\LL=\LL_1+\LL_2$ with $\discr_3\LL_1=\Z/3$. Assume that $J\subset \Phi^b$ spans $\LL$ over $\Z$
and $\sigma :J\to J$ is a symmetry of $\Gamma_J$. If for some 6-root vertex $v$ of $\G_J$
the $\LL_1$-components of $v$ and
$\s(v)$ are congruent modulo $3\LL_1$, then $\sigma$ is $\Z/3$-direct. If $v-\s(v)\notin3\LL$ for some $v\in V_6$, then
$\s$ is  $\Z/3$-reversing. \qed
\end{proposition}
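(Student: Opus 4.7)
The plan is to apply Theorem~\ref{existence of symmetry} to lift the symmetry $\s$ to the unique $P$-direct automorphism $g\in\Aut(\LL)$ with $g|_J=\s$, and then to detect whether $g$ is $\Z/3$-direct by tracking the image of a canonical generator of $\discr_3\LL=\Z/3$ supplied by any 6-root. Specifically, each $v\in V_6$ produces such a generator: $v/3\in\LL^*$ by the definition of a 6-root, $v/3\notin\LL$ (otherwise $v=3u$ would force $u^2=2/3$), and $3(v/3)=v\in\LL$; so $[v/3]\in\discr\LL$ has order exactly $3$ and thus lies in and generates $\discr_3\LL$. Since $g|_J=\s$ one has $g([v/3])=[\s(v)/3]$, and since the only automorphisms of $\Z/3$ are $\pm\id$, $g$ is $\Z/3$-direct iff $[v/3]=[\s(v)/3]$, i.e.\ iff $v-\s(v)\in 3\LL$, and $\Z/3$-reversing otherwise. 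This already yields the second assertion.

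To translate $v-\s(v)\in 3\LL$ into the stated congruence on $\LL_1$-components, I would use that $\discr_3\LL_1=\Z/3$ forces the 3-primary part of $\discr\LL_2$ to be trivial. For any 6-root $w=w_1+w_2$ with $w_i\in\LL_i$, orthogonality $\LL_1\perp\LL_2$ gives $w_2/3\in\LL_2^*$, and the class of $w_2/3$ in $\discr\LL_2$ is a $3$-torsion element of a group with trivial $3$-primary part, hence is itself trivial; thus $w_2\in 3\LL_2$ automatically. Applying this to both $v$ and $\s(v)$, the $\LL_2$-parts of $v-\s(v)$ already lie in $3\LL_2$, so $v-\s(v)\in 3\LL$ is equivalent to $v_1-\s(v)_1\in 3\LL_1$, yielding the first assertion.

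The only mildly subtle point is the observation that the $\LL_2$-component of any 6-root lies automatically in $3\LL_2$; this is where the splitting hypothesis on $\discr\LL$ together with $\discr_3\LL_1=\Z/3$ enters crucially. Once this is in place, both implications reduce to the one-line computation in $\discr_3\LL$ performed in the first paragraph.
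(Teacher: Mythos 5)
Your proposal is correct and follows essentially the paper's intended argument: the paper proves the proposition via the preceding observation that the $3$-primary information sits in the $\LL_1$-summand, and your write-up just makes this precise by noting that $[v/3]$ generates $\discr_3\LL=\Z/3$, that $g([v/3])=[\s(v)/3]$, and that the $\LL_2$-component of any $6$-root lies in $3\LL_2$ since $\discr_3\LL_2=0$. No gaps.
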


\subsection{Vinberg's criterion of termination}
To select a pair of cells, $P$ and $P^\#$, and to calculate simultaneously the associated system of root vectors, $\Phi^b$,
we use Vinberg's algorithm in its original form \cite{survey}: We select first a point $p\in \Upsilon$ and choose a maximal linear independent set
of root vectors $v_1,\dots, v_k$ in $p^\perp\cap \Phi$. Further vectors that form together with $v_1,\dots, v_k$ the set $\Phi^b$
are determined by some explicit inductive procedure (see, \eg,  Section 5.3 in \cite{chirality} for details).

The following Vinberg's {\it finite volume criterion}  (stated as Theorem 5.4.1 in \cite{chirality}) assures that
we found all the root vectors.

\begin{theorem}[Vinberg \cite{survey}]\label{finiteness}
A set of root vectors $J\subset\LL$
in a hyperbolic lattice of signature $(n,1)$,
obtained at some step of Vinberg's algorithm is complete (admits no continuation) if and only if
the hyperbolic volume of the
%fundamental
polyhedron $P^\#(J)$
%defined by these roots
is finite. \qed\end{theorem}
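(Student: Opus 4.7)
The plan rests on the structural properties imposed by Vinberg's algorithm at every step. The algorithm is designed so that the accumulated set $J=\{v_1,v_2,\ldots\}$ satisfies $v_iv_j\le 0$ for $i\ne j$ (each new root is selected pairwise obtuse with the previously chosen ones) and so that each $[H_{v_i}]^\#$ actually supports a facet of the current polyhedron. Hence $P^\#(J)$ is an acute-angled convex polyhedron in $\L^\#$. Moreover, because each $v_i$ is chosen with the correct sign relative to $p$, the fundamental chamber $F$ of the full reflection group $W$ containing $[p]^\#$ satisfies $F\subseteq P^\#(J)$.

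For the sufficiency direction (finite volume implies that $J$ admits no continuation), I would invoke Poincaré's polyhedron theorem applied to the finite-volume acute-angled polyhedron $P^\#(J)$: such a polyhedron is the strict fundamental chamber of the Coxeter subgroup $W'=\langle R_v:v\in J\rangle\subseteq W$, and its walls are precisely the hyperplanes $[H_v]^\#$, $v\in J$. If some $v^*\in\Phi\setminus J$ had $[H_{v^*}]^\#$ meeting the interior of $P^\#(J)$, then $R_{v^*}\in W\setminus W'$ would refine $P^\#(J)$ into strictly smaller $W$-chambers; taking the $W$-chamber adjacent to $F$ inside $P^\#(J)$, one exhibits a root $v'\in\Phi$ with $vp>0$, $v'v_i\le 0$ for all $v_i\in J$, and $[H_{v'}]^\#$ supporting a new facet. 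This $v'$ is a legitimate selection of Vinberg's algorithm, so $J$ admits a continuation, contradicting our assumption. Thus every $v\in\Phi\setminus J$ is redundant or absent from the interior, and the algorithm terminates.

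For the necessity direction I would argue contrapositively. Assume $P^\#(J)$ has infinite hyperbolic volume. Then $P^\#(J)$ contains an unbounded flare toward some boundary point $\xi\in\partial\L^\#$ along which none of the walls $[H_{v_i}]^\#$ are asymptotic. Using the arithmetic structure of $\LL$, namely reduction theory for the signature-$(n,1)$ lattice together with the abundance of $2$-roots and $3$-divisible $6$-roots, one locates a root $v^*\in\Phi\setminus J$ whose hyperplane cuts into the flare; the one nearest to $p$ in the Vinberg distance ordering, adjusted to be pairwise obtuse against the previously chosen roots via reflection by elements of $W'$, then produces a legitimate next step, so $J$ admits a continuation.

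The hard part will be the necessity direction, specifically the arithmetic density argument that any unbounded flare of an acute-angled polyhedron spanned by lattice roots must contain hyperplanes of further roots of $\Phi$. I would follow Vinberg's original treatment in \cite{survey}, exploiting the interplay between the parabolic (or hyperbolic) stabilizer of $\xi$ in $\Aut(\LL)$ and the explicit form of the root system $\Phi\subset\LL$; the divisibility condition $xv\in 3\Z$ for $6$-roots requires only mild adjustment of the classical argument.
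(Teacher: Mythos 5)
The paper offers no proof of Theorem \ref{finiteness} at all: it is quoted as Vinberg's result (a reformulation of Theorem 5.4.1 of \cite{chirality}, with a pointer to \cite{survey}), so your sketch can only be measured against Vinberg's original argument. Measured that way, the direction that the paper actually uses --- finite volume of $P^\#(J)$ implies completeness --- is not established by your sketch, because your argument for it is circular. You assume the volume is finite, suppose some $v^*\in\Phi\setminus J$ has $[H_{v^*}]^\#$ meeting the interior of $P^\#(J)$, produce from it a legitimate next root $v'$, and then declare a contradiction ``with our assumption''; but the assumption in this direction is finiteness of the volume, not completeness of $J$, and nowhere in the paragraph is the volume used. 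What you have really shown is only that a root hyperplane crossing the interior yields a continuation, i.e.\ the (easy) equivalence ``complete $\Leftrightarrow$ no root hyperplane crosses $\operatorname{int}P^\#(J)$''. The substantive point --- that \emph{finite volume} of the non-obtuse-angled polyhedron $P^\#(J)$ excludes the existence of any further root $v$ with the correct sign of $vp$ and with $vv_i\le 0$ for all $v_i\in J$ --- is exactly what has to be proved, and in Vinberg's treatment it rests on his theory of acute-angled polyhedra of finite volume (in essence: a half-space containing the base point and forming non-obtuse dihedral angles with all walls of such a polyhedron must contain the whole polyhedron, so it cannot cut out a new facet). Without an ingredient of this kind the criterion invoked throughout Sections 3 and 4 (via Proposition \ref{sufficient-criterion}) remains unproved.

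Your plan for the converse direction also fails as stated. You propose to show that an infinite-volume flare of $P^\#(J)$ always meets the hyperplane of some root in $\Phi\setminus J$, by an ``arithmetic abundance'' argument for $2$- and $6$-roots. No such abundance is available: the $2$- and $6$-roots of the lattices in question may be sparse or absent altogether. The lattice $U(2)+E_6(2)$ of Table \ref{even-lattices} (Lemma \ref{no-roots}) has no roots at all, so the root set produced by the algorithm is complete while $P^\#$ is the entire hyperbolic space, of infinite volume; more generally, completeness of $J$ forces $P^\#(J)$ to be the full fundamental chamber of $W$, whose volume is finite only when the lattice is reflective for this root system. So the ``only if'' direction is really a statement about reflectivity, cannot be obtained from a local density argument in a flare, and needs the hypotheses under which Vinberg states it; note that the paper itself only ever uses the implication from finite volume to completeness, and treats the root-free lattice separately for precisely this reason.
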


We will use repeatedly the following reformulation of one of Vinberg's sufficient criteria for finiteness of the volume (see \cite[Corollary 1]{termination}).

\begin{proposition}
%[McLeod \cite{termination}]
\label{sufficient-criterion}
 Under the assumptions of Theorem  \ref{finiteness}
 the volume of
%$P_J$
the polyhedron $P^\#(J)$
% defined by $J$
is finite if the following two conditions are satisfied.
\begin{itemize}
\item
Each connected parabolic subgraph $\G_I$ of $\G_J$, $I\subset J$, should be a connected component
of a parabolic subgraph of rank $n-1$.
\item
For
%\mnote{Kh: second item edited}
any Lann\'er's subgraph (for example, a dotted edge) $\G_S$
spanned by
%vertices
 $S\subset J$
there should exist a set of vertices $T\subset J$ which includes neither vertices of $S$ nor vertices adjacent to $S$, so that the graph $\G_T$
%spanned by the set $T$
is elliptic and
the sum of ranks of $S$ and $T$ is $n+1$.
\qed\end{itemize}
\end{proposition}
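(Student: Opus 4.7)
The plan is to deduce Proposition \ref{sufficient-criterion} from Vinberg's general combinatorial characterization of finite-volume Coxeter polyhedra in hyperbolic space (this is the content of \cite[Corollary 1]{termination}, already invoked via Theorem \ref{finiteness}). The general principle is that $P^\#(J)$ has finite volume precisely when every edge of the polyhedron terminates at a proper vertex, i.e.\ a finite (elliptic) or ideal (parabolic of rank $n-1$) vertex. So I would show that the two hypotheses of the proposition suffice to guarantee this termination combinatorially, one for parabolic behavior at infinity and one for Lann\'er (ultra-parallel) walls.

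First I would verify that Condition 1 handles all parabolic subdiagrams. A connected parabolic subgraph $\G_I\subset \G_J$ corresponds geometrically to a collection of facets meeting only at an ideal point; its rank measures the codimension of this common ideal face. For the cusp to have finite volume, the configuration at that ideal point must have total rank $n-1$, and each of its connected components must itself be parabolic — which is exactly what Condition 1 demands. Conversely, any parabolic component not extending to a full rank-$(n-1)$ parabolic would yield an ideal edge escaping to infinity without terminating, contradicting finiteness.

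Next I would verify that Condition 2 takes care of Lann\'er subdiagrams. A Lann\'er subgraph $\G_S$ encodes a set of mutually ultra-parallel walls whose associated simplex is not realized in $\overline{\L^\#}$, and left alone it can bound an unbounded component. The existence of a vertex set $T$ disjoint from $S$ and from all vertices adjacent to $S$ means that the root vectors spanning $T$ are orthogonal (in $\LL$) to those spanning $S$, since in the Coxeter graph non-edges translate into orthogonal Gram products. The condition $\operatorname{rk}(S)+\operatorname{rk}(T)=n+1$ then forces the orthogonal sum of the Lann\'er "simplex at infinity" and the elliptic simplex spanned by $T$ to fill the ambient dimension, so that the potentially escaping face associated with $\G_S$ is bounded off by $\G_T$ into a finite-volume piece. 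Combining the two items, every edge of $P^\#(J)$ terminates properly and Vinberg's criterion yields finite volume.

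The main obstacle in a careful write-up is the second step: verifying rigorously that the combinatorial "non-adjacency" clause translates into genuine orthogonal splitting of the root system (so that the orthogonal direct sum interpretation of $\operatorname{rk}(S)+\operatorname{rk}(T)=n+1$ is correct), and matching precisely the local dimension count at a Lann\'er face with the dimensional data in Vinberg's original criterion. Everything else is a direct unpacking of \cite[Corollary 1]{termination} in the language of the Coxeter graph $\G_J$ introduced in Section \ref{lattice-characterization}.
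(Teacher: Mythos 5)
The paper offers no proof of this proposition: it is stated with a \qed\ precisely because it \emph{is} \cite[Corollary 1]{termination} (a sufficient form of Vinberg's finite-volume criterion), merely rewritten in the notation of Section \ref{lattice-characterization}. So your proposal sits in an ambiguous position: if you are allowed to quote \cite[Corollary 1]{termination}, then nothing remains to be done except the dictionary between the paper's Coxeter graph conventions and McLeod's (your observation that $m_{vw}=0$ forces $vw=0$, so the non-adjacency clause really is orthogonality of the roots of $T$ to those of $S$, is the only substantive part of that dictionary, and it is correct). If instead you intend a self-contained derivation, then the essential content is missing. Your treatment of the first condition is circular: asserting that ``for the cusp to have finite volume the configuration must have total rank $n-1$, which is exactly what Condition 1 demands'' restates the claim, and the ``conversely'' argument addresses necessity, which is not what a sufficiency statement requires. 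For the second condition you yourself flag the decisive step as ``the main obstacle'': showing that the orthogonal elliptic set $T$ with $\operatorname{rk}S+\operatorname{rk}T=n+1$ actually ``bounds off'' the divergent configuration. That step is the theorem. A genuine proof has to show that Conditions 1 and 2 together force the closure of $P^\#(J)$ in $\overline{\L^\#}$ to meet the absolute in finitely many points, each an ideal vertex with parabolic stabilizer of rank $n-1$ (equivalently, that Vinberg's exact criterion --- every elliptic subdiagram of rank $n-1$ extends in exactly two ways to an elliptic subdiagram of rank $n$ or a parabolic one of rank $n-1$ --- holds); naming the desired conclusion does not establish it, and this reduction is exactly what is carried out in \cite{termination} following \cite{survey}.

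Two smaller points. A Lann\'er subgraph does not ``encode a set of mutually ultra-parallel walls'': only a dotted edge corresponds to a pair of diverging hyperplanes, whereas larger Lann\'er diagrams consist of walls that may meet pairwise but have no common point, ordinary or ideal (their Gram matrix is indefinite); it is this last property, not ultra-parallelism, that your argument needs. Finally, ``rank'' of $S$ here means the number of its vertices (the Gram matrix of a Lann\'er set is nondegenerate), and your dimension count $\operatorname{rk}S+\operatorname{rk}T=n+1$ is only the starting point of the volume estimate near the face cut out by $T$, not its conclusion. In short: either cite \cite[Corollary 1]{termination} as the paper does, or supply the omitted reduction to Vinberg's criterion; as written, the proposal does neither.
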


For the list of connected parabolic and Lann\'er's  graphs see Vinberg's survey \cite{survey}.

\subsection{Chirality under reductions and extensions}\label{reductions-extensions}
Let $\Gamma$
be the Coxeter graph of $\LL$ and $\Gamma_J$
be its elliptic subgraph whose vertex set, J, spans a sublattice $\LL_{J}\subset\LL$ with the discriminant of period 2 (this means that the connected components of $\Gamma_J$ have types $A_1$, $D_{2n}$ ($n\geq 2$), $E_7$, and $E_8$). Then the orthogonal complement $\LL^J\subset \LL$ of $\LL_J$ is, like $\LL$, a hyperbolic lattice with the discriminant splitting into a direct sum of the component $\discr_{3}(\LL^J)=\Z/3$ and a component $\discr_{2}(\LL^J)$ of period 2 (the latter properties follow, e.g., from \cite[Proposition 1.15.1]{stability}).

Below we put $H_J=\cap_{v\in J} H_v$.%\mnote{Kh: new sentence with notation explained}
\begin{lemma}\label{reduction} Assume that the lattice $\LL$ is achiral, and a $P$-direct $\Z/3$-reversing $f\in Aut (P)$ is induced by a symmetry of the graph $\Gamma$ which preserves $\Gamma_J$ invariant. Then $\LL^J$ is also achiral.
\end{lemma}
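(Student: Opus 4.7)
My plan is to show that the restriction $f^J := f|_{\LL^J}$ is itself an achiral automorphism of $\LL^J$. Since $f$ preserves the vertex set $J$, it preserves the sublattice $\LL_J=\langle J\rangle_\Z$ and hence its orthogonal complement $\LL^J$, so $f^J$ is well defined.

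To see that $f^J$ is $\Z/3$-reversing, I would first argue that the gluing subgroup $G=\LL/(\LL_J\oplus\LL^J)\subset\discr(\LL_J)\oplus\discr(\LL^J)$ is a $2$-group. Indeed, its $3$-primary part $G_3$ is an isotropic subgroup of $\discr_3(\LL^J)=\Z/3$ (since $\discr(\LL_J)$ has period $2$), which is trivial by nondegeneracy of the induced quadratic form; and the only primes occurring in $\discr(\LL_J\oplus\LL^J)$ are $2$ and $3$. Consequently $\LL^J\hookrightarrow\LL$ induces an $f$-equivariant isomorphism $\discr_3(\LL^J)\cong\discr_3(\LL)$, giving $\delta_3(f^J)=\delta_3(f)=-\id$. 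Moreover, since $f$ is $P$-direct it preserves each component of $\Upsilon$, and therefore also preserves each component of $\Upsilon^J=\Upsilon\cap(\LL^J\otimes\R)$, so $f^J$ preserves time-orientation.

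It remains to find a cell $P^J\subset\L^J$ fixed by $f^J$, which together with time-orientation preservation will give $f^J\in\Aut^+(P^J)$. The candidate is the cell of $\L^J$ whose interior contains the relative interior of $F_J:=P\cap[H_J]$, the $f$-invariant face of $P$ of full dimension in $\L^J$. The main obstacle is to verify that this $P^J$ is well defined, i.e., that no wall of the $\L^J$-arrangement meets $\operatorname{int}(F_J)$.

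To overcome this, I would show that every root $u$ of $\LL^J$ is also a root of $\LL$. For $2$-roots this is immediate. For a $6$-root $u$ of $\LL^J$, the linear map $\LL\to\Z/3$ sending $x$ to $u\cdot x\bmod 3$ vanishes on $\LL^J$ (by the $6$-root condition) and on $\LL_J$ (by $u\perp\LL_J$), so it factors through the $2$-group $G$, forcing it to be identically zero and making $u$ a $6$-root of $\LL$. Consequently every hyperplane of the $\L^J$-arrangement has the form $[H_u]\cap[H_J]$ for a hyperplane $[H_u]$ of the $\L$-arrangement. If such $[H_u]$ met $\operatorname{int}(F_J)$, it would have to contain the entire face $F_J$ of $P$, forcing $u\in\LL^J\cap(\LL_J\otimes\R)=\{0\}$ by positive-definiteness of $\LL_J$, a contradiction. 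Hence the cell $P^J$ is well defined, and $f^J$ preserves it, yielding the sought-after achiral automorphism of $\LL^J$.
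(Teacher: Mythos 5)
Your proof is correct and follows essentially the same route as the paper: restrict $f$ to $\LL^J$, note that $\discr_3(\LL^J)=\discr_3(\LL)=\Z/3$ makes the restriction $\Z/3$-reversing, and show that the full-dimensional face $P\cap[H_J]$ lies in a single cell of $\LL^J$ which $f$ then preserves. Your argument that every root of $\LL^J$ is a root of $\LL$ (via the glue group being a $2$-group) simply makes explicit the step the paper compresses into ``from $\discr_3(\LL^J)=\discr_3(\LL)=\Z/3$ it follows that $P^\#\cap H_J$ is contained in some cell of $\LL^J$.''
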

\begin{proof} Since $\Gamma_J$ is elliptic, the face $P^{\#}\cap H_{J}$ of $P^{\#}$ is of the same dimension as $\LL^J$.
Therefore, from $\discr_{3}(\LL^J)=\discr_{3}(\LL)=\Z/3$ it follows that
$P^{\#}\cap H_{J}$
is contained in some cell of $\LL^J$. Let us denote the latter by $P^{J\#}$. Since $f$ preserves both $P^{\#}$ and $H_J$, it should preserve also $P^{J\#}$. The restriction $f_{|L^J}$ has the same, non-trivial, 3-primary component in $\discr_{3}(\LL^J)=\discr_{3}(\LL)=\Z/3$ as $f$. Thus, $f_{|L^J}\in Aut (P^J)$
is a $P^J$-direct $\Z/3$-reversing authomorphism, and hence $\LL^J$ is also achiral.
\end{proof}

\begin{lemma}\label{extension}
Let  $\LL_h$ be an achiral lattice,  and  let $\LL_e$ be an elliptic lattice which is generated by $2$-roots and has discriminant of period 2.
Then their direct sum, $\LL= \LL_e +\LL_h$, is also an achiral lattice.
\end{lemma}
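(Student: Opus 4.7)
The plan is to extend the given achiral automorphism $f_h$ of $\LL_h$ by the identity on $\LL_e$: set $f=\id_{\LL_e}\oplus f_h\in\Aut(\LL)$. The $\Z/3$-reversing property of $f$ is immediate, since $\discr\LL_e$ has period $2$, so $\discr_3\LL$ coincides with $\discr_3\LL_h=\Z/3$, on which $f$ acts through $f_h$, hence non-trivially.

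The main task is to produce a cell $P$ of $\LL$ with $f\in\Aut^+(P)$. I will exhibit a vector $q\in\Upsilon\subset\LL_\R$ with $f(q)=q$ that lies off every root hyperplane of $\LL$; then the unique cell $P^\#$ whose interior contains $[q]^\#$ is automatically $f$-invariant, and $f$ is $P$-direct since $q\in\overline{P^\#}\setminus\overline{-P^\#}$. First, pick $q_h$ in the interior of $P_h^\#$ fixed by $f_h$, obtained by averaging a generic interior point over the finite cyclic group $\langle f_h\rangle$, using convexity of $P_h^\#$. Next, pick $p_e\in(\LL_e)_\R$ generically chosen so that $v_e\cdot p_e\ne 0$ for every nonzero $v_e\in\LL_e$ (a countable union of hyperplane conditions in $(\LL_e)_\R$ is easily avoided). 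For small $\epsilon>0$, set $q_\epsilon=(\epsilon p_e,q_h)\in\LL_\R$; then $q_\epsilon^2=\epsilon^2 p_e^2+q_h^2<0$ and $f(q_\epsilon)=q_\epsilon$ by construction.

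The main obstacle is controlling the mixed roots $v=(v_e,v_h)$ of $\LL$ with both components nonzero, whose hyperplanes could in principle pass through $q_\epsilon$. For any root $v$, expand $v\cdot q_\epsilon=\epsilon\,v_e\cdot p_e+v_h\cdot q_h$: if $v_h=0$ the value is nonzero by the choice of $p_e$; if $v_e=0$ then $v_h$ is a root of $\LL_h$, so $v_h\cdot q_h\ne 0$ by interiority of $q_h$ in $P_h^\#$; in the mixed case the inner product vanishes only at the single value $\epsilon=-(v_h\cdot q_h)/(v_e\cdot p_e)$. Since the roots of $\LL$ form a countable set, the union of bad values of $\epsilon$ is countable and easily avoided by a generic small $\epsilon>0$. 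The resulting cell $P^\#$ is then the desired $f$-invariant cell, which shows that $f$ is achiral and hence that $\LL$ is achiral.
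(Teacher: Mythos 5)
Your construction of the extension $f=\id_{\LL_e}\oplus f_h$ and the genericity argument in $\epsilon$ are fine, but the proof hinges on a point that need not exist: an $f_h$-fixed point $q_h$ in the interior of $P_h^{\#}$. You produce it by averaging over ``the finite cyclic group $\langle f_h\rangle$'', yet nothing in the definition of an achiral automorphism makes $f_h$ of finite order. The stabilizer $\Aut(P_h)$ of a cell can be infinite: for instance for $\LL_h=U(2)+E_6(2)$ of Lemma \ref{no-roots}, which is one of the lattices this very lemma is applied to, there are no $2$- or $6$-roots at all, so $P_h=\L$ and $\Aut(P_h)=\Aut(\LL_h)$ is an infinite group; lattices whose fundamental polyhedra have infinite volume occur here as well. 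If $f_h$ has infinite order then, being an element of the discrete group $\Aut(\LL_h)$ acting on hyperbolic space, it is parabolic or loxodromic and fixes no point of $\L_h$ whatsoever (an elliptic element of a discrete group has finite order, since it lies in the compact stabilizer of its fixed point). So $q_h$, and with it $q_\epsilon$, may simply fail to exist, and your argument proves the lemma only under the additional, unjustified hypothesis that $\LL_h$ admits a finite-order (equivalently, elliptic) achiral automorphism.

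The paper's proof avoids fixed points altogether: using that $\LL_e$ is generated by $2$-roots and has discriminant of period $2$, it checks that no root of $\LL$ with nonzero components in both summands has its mirror meeting the hyperbolic subspace $\L(\LL_h)$ (for a $2$-root both components would have positive even square, forcing $v^2>2$; for a $6$-root both squares would in addition be divisible by $3$, forcing $v^2\ge 12$). Hence a cell $P^{J}$ of $\LL_h$ is a face of a cell $P$ of $\LL$ whose walls along this face come only from roots of $\LL_e$ or of $\LL_h$, and $f=\id_{\LL_e}\oplus f_h$ preserves $P$ and is $P$-direct regardless of the order of $f_h$. To repair your argument you would either have to justify the finite-order assumption (which is not available in general) or replace the fixed-point step by an invariant-cell argument of this kind.
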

\begin{proof} Due to assumptions made, we may start Vinberg's root sequence $\Phi^b$ from a root basis of $\LL_e$. Thus, we can identify $\LL_e$
with $\LL_J$ for a corresponding vertex set $J$
of Coxeter's graph of $\LL$,
and $\LL_h$ with the orthogonal complement $\LL^J$ of $\LL_J$.

Consider, now, a cell $P^J\subset\Lambda(\LL^J)$ and the components of
its preimage, $\pm P^{J\#}\subset
\Lambda^\#(\LL^{J})$.
By the same reason as
in Lemma~\ref{reduction}, $P^J$ contains the face $P\cap H_{J}$ of some cell $P$ in $\Lambda(\LL)$. However, here, the relation is stronger: $P\cap H_{J}=P^J$. Indeed, if a wall $H_v$ of $P$ is not disjoint from $P\cap H_{J}$, then either $v\in \LL^J$ (and so $H_v\bot H_J$), or $v\in \LL_J$ (and so $H_v\supset H_J$). To see this, let us consider a root $v=v_{J}+v^{J}\in \LL_{J}+\LL^{J}$ whose components $v_{J}+v^{J}$ are both non-zero. Since $H_v$ intersects $\Lambda(\LL^J)$, we have $(v^{J})^2>0$. And since $\LL_J$ is positive definite, we have $(v_{J})^2>0$. The both squares are even. Hence, $v^{2}>2$. If $v^{2}=6$, then, in addition, $(v^{J})^2$ and $(v_{J})^2$ are divisible by $3$, which also leads to a contradiction.

Choose a $P^J$-direct $\Z/3$-reversing automorphism $f^{J}\in Aut(P^J)$ and consider its extension $f:\LL\rightarrow \LL$
that acts as the identity map
on $\LL_J$. As it follows from above, it preserves all the walls of $P$ which are not disjoint from $P\cap H_J$. Thus, it preserves $P$. Clearly, it is $P$-direct and $\Z/3$-reversing (it has the same 3-primary component as $f^J$).
\end{proof}

%%%%%%%%%%%%%%%%%%%%    Section 3
\section{Cases of Even Parity}

\subsection{Lattices with even discriminant forms}
For the list of coarse deformation classes in terms of numerical invariants of $\M_-$,
we address the reader to
Fig. 1 in \cite{topology}. Since 2-primary parts of the discriminant forms of $\M_-$ are $\M^0_+$ are just opposite, we keep
from this list only the classes of even parity (type I in terminology of
\cite{topology}). Then, using the relation $\rk \M^0_+=22 -\rk \M_-$ we
apply Proposition \ref{determination}  and obtain
Table \ref{even-lattices} which shows for each of the classes of even parity
 the corresponding lattice $\M_+^0$. As in Table \ref{chirality-all} the answers are arranged in columns and rows according to the values of  $d$ and $\rho$.

\begin{table}[h]
\caption{Lattices with even discriminant form}\label{even-lattices}
\resizebox{\textwidth}{!}{
\begin{tabular}{cccccccc}
\boxed{\bf{d}}\\
10&&        &              &${U(2)+A_2+E_8(2)=U(2)+E_6(2)+D_4}$&&&\\
8&&      &${U(2)+E_6(2)} $&${U+A_2+E_8(2)=U+E_6(2)+D_4}$&                &&\\
6&&        &${U+E_6(2)}$    &${U(2)+A_2+2D_4}$  &                &&\\
4&&       &${U(2)+A_2+D_4}$&${U+A_2+2D_4}$     &${U(2)+A_2+D_4+E_8}$&&\\
2&&${U(2)+A_2}$&${U+A_2+D_4}$   &${U(2)+A_2+E_8}$   &${U+A_2+D_4+E_8}$   &${U(2)+A_2+2E_8}$&\\
0&&${U+A_2}$&               &${U+A_2+E_8}$      &                &${U+A_2+2E_8}$&\\
 \hline
 &&4        &8              &12                 &16              &20&\boxed{\bf{\rho}}\\
\end{tabular}
}
\end{table}

\begin{theorem}\label{main-even}
Table \ref{even-lattices} contains 4 chiral lattices: ${U+A_2}$, ${U(2)+A_2}$, ${U+A_2+D_4}$, and ${U+A_2+E_8}$.
All the
others are achiral.
\end{theorem}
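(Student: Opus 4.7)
The plan is to split Table \ref{even-lattices} into the $13$ achiral lattices and the $4$ chiral candidates, and handle each group separately. The achiral side relies on reducing to a small list of base cases via Lemma \ref{extension}; the chiral side relies on a direct Vinberg analysis of each small-rank candidate.

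For achirality, I would first use Lemma \ref{extension} with $\LL_e$ drawn from direct sums of the root lattices $A_1$, $D_{2n}$ ($n\ge 2$), $E_7$, $E_8$, which are precisely the connected positive-definite lattices generated by $2$-roots whose discriminant is of period $2$. Reading the table with this in mind, the decompositions
\begin{align*}
U(2)+A_2+2E_8 &= (U(2)+A_2+E_8)+E_8,\\
U(2)+A_2+D_4+E_8 &= (U(2)+A_2+D_4)+E_8,\\
U(2)+A_2+2D_4 &= (U(2)+A_2+D_4)+D_4,\\
U+A_2+E_8(2) &= (U+E_6(2))+D_4,\\
U(2)+A_2+E_8(2) &= (U(2)+E_6(2))+D_4
\end{align*}
reduce the achirality question to the seven base lattices $U(2)+A_2+D_4$, $U(2)+A_2+E_8$, $U+A_2+D_4+E_8$, $U+A_2+2E_8$, $U+A_2+2D_4$, $U+E_6(2)$, and $U(2)+E_6(2)$. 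For each of these I would run Vinberg's algorithm starting from a convenient negative vector, compute the system $\Phi^b$ of simple roots of a fundamental polyhedron $P^\#$, verify termination via Proposition \ref{sufficient-criterion}, and exhibit an explicit involutive symmetry $\sigma$ of the resulting Coxeter graph $\Gamma$. Since $\Phi^b$ spans $\LL$ over $\Z$, Theorem \ref{existence of symmetry} lifts $\sigma$ to a $P$-direct automorphism $g$ of $\LL$; Proposition \ref{reversing}, applied to the splitting with $\discr_3\LL_1=\Z/3$ carried by the $A_2$ or $E_6(2)$ summand, then certifies that $g$ is $\Z/3$-reversing provided one can display a $6$-root $v\in\Phi^b$ with $v-\sigma(v)\notin 3\LL$.

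For the four chiral candidates $U+A_2$, $U(2)+A_2$, $U+A_2+D_4$, $U+A_2+E_8$, of ranks $4$, $4$, $8$, $12$, I would again run Vinberg's algorithm to produce $\Phi^b$ and the Coxeter graph $\Gamma$, enumerate the full symmetry group of $\Gamma$, and check via Proposition \ref{reversing} that every symmetry acts trivially on $\discr_3\LL=\Z/3$, \ie, is $\Z/3$-direct. By Theorem \ref{chirality-theorem} this rules out any achiral automorphism and forces chirality.

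The hard part will be the two base cases $U+E_6(2)$ and $U(2)+E_6(2)$, and possibly also $U+A_2+D_4+E_8$: the $2$-scaling of $E_6$ pushes walls further apart in the hyperbolic model and tends to make the fundamental polyhedron of very large, possibly infinite, volume, so that the raw Vinberg algorithm may fail to terminate in any useful time frame. These are precisely the difficulties anticipated in Section~1.3; I would handle them by the device of Section \ref{4-roots-added}, enlarging the reflection group by adjoining selected $4$-root reflections so as to shrink the fundamental polyhedron, while keeping enough control on the symmetries of the original $\Phi^b$ to still read off achirality of the ambient lattice $\LL$.
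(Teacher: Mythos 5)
Your treatment of the four chiral lattices and your use of Lemma \ref{extension} to peel off $D_4$ and $E_8$ summands coincide with the paper (which, in addition, simply quotes \cite{chirality} for $U+A_2$ and $U+A_2+E_8$, and for the achirality of $U+A_2+2E_8$). But on the achiral side your plan has genuine gaps. First, $U(2)+E_6(2)$ has no $2$-roots and no $6$-roots at all (every square in this lattice is divisible by $4$, while roots must have square $2$ or $6$), so there is no Vinberg sequence, no Coxeter graph, and no spanning set $J\subset\Phi^b$ to which Theorem \ref{existence of symmetry} could be applied; your worry about termination for this lattice is beside the point. The case must be, and in the paper is (Lemma \ref{no-roots}), handled by a direct one-line argument: with no walls the cell is the whole hyperbolic space, and an automorphism acting by $\pm\id$ on the two summands is visibly an achiral one.

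Second, and more seriously, for $U+E_6(2)$, $U+A_2+2D_4$ and $U+A_2+D_4+E_8$ you propose complete Vinberg computations, with the $4$-root enlargement as a fallback; this is exactly what the paper avoids, and the fallback is not fit for purpose as you describe it: Lemma \ref{reflection_subgroup} gives generators of $\Aut(P)$ and is used in the paper only to \emph{exclude} achiral symmetries (chirality); to obtain achirality from it you would still have to exhibit an explicit $\Z/3$-reversing, $P$-direct element among the products $gh_1\cdots h_n$, which you never do. The paper's actual mechanism for these three lattices is the reduction Lemma \ref{reduction} together with Lemma \ref{hexagonal-subgraphs}: inside the hexagonal subgraph $\Gamma_{\rm hex}$ of the Coxeter graph of the achiral lattice $U+A_2+2E_8$ one finds subsets, invariant under the achiral involution $\Psi$, spanning primitive copies of $nD_4$, $n=1,2,3$, whose orthogonal complements are identified with $U+A_2+D_4+E_8$, $U+A_2+2D_4$ and $U+E_6(2)$ via Proposition \ref{determination}; achirality then descends to these complements. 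Without this (or an actually completed computation), your argument for these lattices, and hence for Theorem \ref{main-even}, is incomplete. A related smaller point: for proving achirality you never need a complete root system, since by Theorem \ref{existence of symmetry} a spanning subset $J\subset\Phi^b$ carrying a $\Z/3$-reversing symmetry of $\G_J$ already suffices; this is precisely what makes the paper's partial computations for $U(2)+A_2+E_8$, $U(2)+A_2+D_4$ and the rank-$20$ lattice $U+A_2+2E_8$ feasible, whereas insisting on termination in those cases is unnecessary and, for the larger lattices, unrealistic.
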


The three lattices in the bottom row were already treated in \cite{chirality}:
$U+A_2$ and $U+A_2+E_8$ were shown to be chiral, while $U+A_2+2E_8$ to be achiral.
So, our task is to
examine
the remaining thirteen cases.

We start with one trivial example.

\begin{lemma}\label{no-roots}
The lattice $\LL= U(2)+E_6(2)$ is achiral.
\end{lemma}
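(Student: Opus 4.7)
The plan is to observe that $\LL = U(2)+E_6(2)$ has no $2$-roots and no $6$-roots at all, so the reflection group $W$ is trivial, the whole hyperbolic space $\L$ is a single cell $P$, and every automorphism of $\LL$ is automatically $P$-direct. Then it will suffice to exhibit any $\Z/3$-reversing automorphism.

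First I would check the absence of roots by computing residues of squares modulo $4$. On $U(2)$, in the standard hyperbolic basis $e,f$ with $e^2=f^2=0$ and $e\cdot f=2$, any vector $ae+bf$ has square $4ab\in 4\Z$. On $E_6(2)$, since $E_6$ is even, every vector has square in $2\cdot 2\Z=4\Z$. Orthogonal summation gives $v^2\in 4\Z$ for every $v\in \LL$, so neither $v^2=2$ nor $v^2=6$ is attainable, i.e.\ $\Phi=V_2\cup V_6=\varnothing$. Hence $W=\{\id\}$, the cellular decomposition of $\L$ consists of a single cell $P=\L$, and $\Aut(P)=\Aut(\LL)$. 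The $P$-direct subgroup $\Aut^+(P)$ is simply the index-$2$ subgroup of $\Aut(\LL)$ preserving the time-orientation of the negative definite direction (which sits inside the indefinite summand $U(2)$).

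Next I exhibit the automorphism $g=\id_{U(2)}\oplus(-\id_{E_6(2)})$. Acting as the identity on $U(2)$, it preserves the negative-definite line, so $g\in \Aut^+(P)$. On the discriminant group, $g$ acts trivially on $\discr U(2)$ and as $-\id$ on $\discr E_6(2)$; the $3$-primary part $\discr_3\LL=\discr_3 E_6(2)=\Z/3$ is inherited from $E_6$, and $-\id$ on $\Z/3$ is non-trivial. Thus $\delta_3(g)\ne \id$, i.e.\ $g$ is $\Z/3$-reversing. By Theorem~\ref{chirality-theorem} (in the equivalent reformulation used throughout), $\LL$ is achiral.

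There is essentially no obstacle here: the whole content is the mod-$4$ observation that kills all roots, after which the cell $P$ is forced to be the entire space and the sign-flip on $E_6(2)$ provides a manifestly $P$-direct and $\Z/3$-reversing automorphism.
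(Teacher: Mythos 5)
Your proof is correct and follows essentially the same route as the paper: both rest on the observation that $U(2)+E_6(2)$ has no $2$- or $6$-roots (you supply the mod~$4$ computation the paper leaves implicit), so the single cell is all of $\L$, and then a sign-flip on one summand gives the required automorphism. The only cosmetic difference is that the paper exhibits $-\id_{U(2)}\oplus\id_{E_6(2)}$, which is $\Z/3$-direct and $P$-reversing, whereas you use its composition with $-\id$, namely $\id_{U(2)}\oplus(-\id_{E_6(2)})$, which is $P$-direct and $\Z/3$-reversing --- the two formulations are equivalent.
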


\begin{proof}
This lattice has neither 2-roots nor 6-roots. Hence, here $P=\L_+(c$), and therefore $g : U(2)+E_6(2) \to U(2)+E_6(2)$ that is equal to $-\id$ on $U(2)$ and $\id$ on $E_6(2)$
is $\Z/3$-direct and it belongs to $\Aut^-(P)$. Thus, $\LL$ is achiral.
\end{proof}

\subsection{A few more direct calculations}
Here we treat the cases $U(2)+A_2$, $U+A_2+D_4$, $U(2)+A_2+E_8$, and $U(2)+A_2+D_4$
using the same approach (based on Vinberg's algorithm) as we applied in \cite{chirality} for M- and (M-1)-lattices.

\begin{lemma}\label{4lattices}
\begin{enumerate}
\item
The lattices
$U(2)+A_2$ and $U+A_2+D_4$ are chiral.
\item
The lattices $U(2)+A_2+E_8$ and $U(2)+A_2+D_4$ are achiral.
\end{enumerate}
\end{lemma}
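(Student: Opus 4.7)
The plan is to follow the same approach applied in \cite{chirality} for the M- and (M-1)-lattices. For each of the four lattices $\LL$ in the statement we will produce its Coxeter graph $\G$ by running Vinberg's algorithm, and then translate chirality into a combinatorial question about symmetries of $\G$ using Theorem \ref{chirality-theorem}, Theorem \ref{existence of symmetry}, and Proposition \ref{reversing}.

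As the controlling vector for Vinberg's algorithm we take an isotropic generator $p$ in the $U$ or $U(2)$ summand of $\LL$, so that the initial batch of roots in $\Phi^b$ is a root basis of the negative-definite complement of $\la p\ra$, coming from the $A_2$ summand together with the $D_4$ or $E_8$ (when present). We then inductively add further root vectors and verify termination via Proposition \ref{sufficient-criterion}: every connected parabolic subgraph should extend to one of rank $n-1$, and every Lann\'er subgraph should admit a disjoint non-adjacent elliptic subgraph of complementary rank. Once $\G$ is determined, the decomposition $\LL=\LL_1+\LL_2$ of Proposition \ref{reversing} will be taken with $\LL_1=A_2$ (the summand carrying $\discr_3\LL$), so the $\Z/3$-type of a symmetry of $\G$ is read off from how it permutes the $A_2$-components of the $6$-root vertices modulo $3A_2$.

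For part (1), the lattice $U(2)+A_2$ lives in a $3$-dimensional hyperbolic space, so $\G$ will be very small and its symmetries easy to enumerate; for $U+A_2+D_4$ the diagram is larger but still rigid enough to determine all symmetries explicitly. In both cases the expected outcome is that every symmetry of $\G$ preserves the $A_2$-component of each $6$-root modulo $3A_2$, so Proposition \ref{reversing} forces every $P$-direct automorphism to be $\Z/3$-direct, and chirality follows from Theorem \ref{chirality-theorem}. For part (2), the additional room provided by the $E_8$ or $D_4$ summand will yield an extra symmetry $\s$ of $\G$ together with a $6$-root vertex $v$ satisfying $v-\s(v)\notin 3\LL$; writing such a $\s$ down explicitly and applying Theorem \ref{existence of symmetry} and Proposition \ref{reversing} will produce a $P$-direct $\Z/3$-reversing automorphism, hence achirality.

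The main obstacle is the Vinberg computation itself for the three rank-$8$ and rank-$12$ lattices $U+A_2+D_4$, $U(2)+A_2+D_4$, and $U(2)+A_2+E_8$: one must carry the algorithm through to completion, verify termination via Proposition \ref{sufficient-criterion}, and keep careful track of the $A_2$-components of every $6$-root so that Proposition \ref{reversing} can be applied. Given the diagram, the symmetry enumeration---excluding every $\Z/3$-reversing symmetry in part (1) and exhibiting one in part (2)---is then a routine, finite combinatorial check. Note that Lemmas \ref{reduction} and \ref{extension} of Section \ref{reductions-extensions} give no shortcut here, since the potential starting points for extension in part (2) (namely $U(2)+A_2$ and $U+A_2+D_4$) are themselves chiral by part (1).
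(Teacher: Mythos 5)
Your plan coincides with the paper's method: run Vinberg's algorithm for each of the four lattices, check termination via Theorem \ref{finiteness} and Proposition \ref{sufficient-criterion}, and then decide chirality by enumerating symmetries of the resulting graph through Theorem \ref{existence of symmetry} and reading their $\Z/3$-type off the $A_2$-components with Proposition \ref{reversing}. However, as written this is a strategy outline rather than a proof: the entire substance of the paper's argument consists of the explicit root sequences and graphs (Figures \ref{1-8(1)}, \ref{3-6}, \ref{5-4(1)}, \ref{2-5(1)}), the verification that for $U(2)+A_2$ and $U+A_2+D_4$ the only parabolic subgraphs are $\til G_2$ (resp.\ $\til G_2$, $\til D_4$) and there are no Lann\'er subgraphs, the observation that the unique nontrivial symmetry in the first case and every symmetry in the second case fixes a $6$-root ($v_2$, resp.\ $v_3$), and, for part (2), the explicit reversing involution sending $v_2$ to $v_8\equiv -v_2 \bmod 3\LL$. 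In your text these conclusions appear only as ``expected outcomes,'' so the decisive computational content is deferred rather than supplied; there is no way to confirm, for instance, that no hidden $\Z/3$-reversing symmetry exists in part (1) without the completed diagrams.

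Two further points of comparison. First, you propose to carry the algorithm ``through to completion'' for all three larger lattices, including $U(2)+A_2+E_8$ and $U(2)+A_2+D_4$; the paper deliberately avoids this by using the converse half of Theorem \ref{existence of symmetry}: a \emph{partial} Vinberg sequence whose vertices span $\LL$ over $\Z$ and which admits a $\Z/3$-reversing symmetry already yields a $P$-direct achiral automorphism, so completeness of the graph is needed only for the chirality claims of part (1). Your route is not wrong, just heavier than necessary for part (2). Second, your choice of controlling vector is off relative to the framework of Section 2.5: the algorithm there starts from $p\in\Upsilon$, i.e.\ $p^2<0$ (the paper takes $p=u_1-u_2$), whereas an isotropic generator of $U$ or $U(2)$ does not lie in $\Upsilon$, and its orthogonal complement is degenerate rather than negative definite, so the claim about the initial batch of roots would need the cusp-based variant of Vinberg's algorithm, which is not the one set up in the paper. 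This is repairable, but as stated it is a technical slip in the only step you describe concretely.
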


\begin{proof}
As in \cite{chirality}, we fix standard bases: $u_1, u_2$ for $U$ and $U(2)$; $a_1, a_2$ for $A_2$;  $d_1,d_2,d_3,d_4$ for $D_4$ along with its dual $d^*_1,d^*_2,d^*_3,d^*_4$ for $D^*_4\subset D_4\otimes\Q$; and
 $e_1,\dots, e_8$ for $E_8$ along with its dual $e^*_1,\dots, e^*_8$ for $E^*_8=E_8$.
 We denote by $d_1$ the ``central'' root vector of $D_4$ and then
$d_1^*=2d_1+d_2+d_3+d_4$, while $d_i^*=d_1+\frac{d_2+d_3+d_4+d_i}2$ for $i=2,3,4$. For the expressions of $e^*_i$, see for instance
\cite{bourbaki} (or \cite[Fig.1]{chirality}).

 Each time to lunch Vinberg's algorithm, we select as the initial vertex of the fundamental domain the point
$p=u_1-u_2$.  The resulting Vinberg's sequences and their Coxeter graphs are shown below in Fig. \ref{1-8(1)}, \ref{3-6}, \ref{5-4(1)}, and \ref{2-5(1)}, respectively.
As in \cite{chirality}, we omit at level 0 of Vinberg's sequence the standard simple
root vectors: $e_i$ of $E_8$, $e_i'$ of the second copy of $E_8$, $i=1,\dots,8$,
 and $d_i$, $i=1,\dots4$, of $D_4$ (when such summands appear).

\begin{figure}[h!]
\caption{Vinberg's vectors and their Coxeter's graph for $U(2)+A_2$ }\label{1-8(1)}
\hbox{\vtop{\hsize5cm
\hbox{\includegraphics[width=0.55\textwidth]{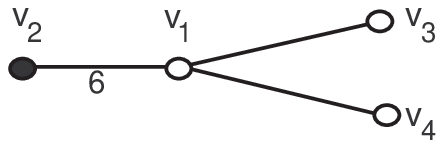}}
\hbox{
\begin{tabular}{|c|}
\hline
Parabolic of rank 2\\
\hline
$\til G_2$\\
\hline
\end{tabular}
}}
\vtop{\hsize5cm
$ \boxed{\begin{matrix}
&U(2)&A_2\\
\hline
p&1,-1&0,0\\
\text{level 0}&&\\
v_1&0,0&0,1\\
v_2&0,0&1,-1\\
\text{level 4}&&\\
v_3&0,-1&-1,-1\\
v_4&1,0&-1,-1\\
\end{matrix}}
$}}
 \end{figure}

\begin{figure}
\caption{Vinberg's vectors and their Coxeter's graph for $U+A_2+D_4$}\label{3-6}
\hbox{\vtop{\hsize4cm
\hbox{\includegraphics[width=0.7\textwidth]{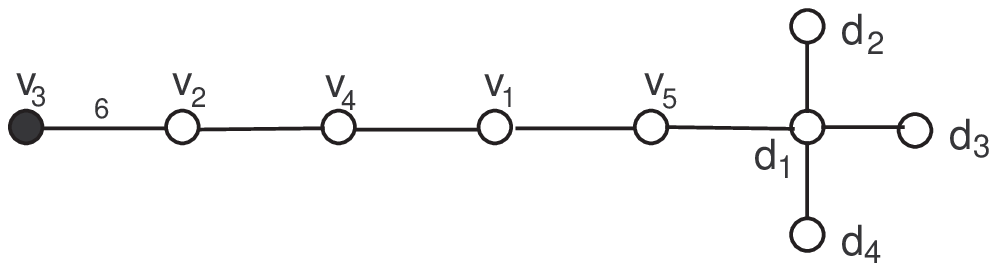}}
\hbox{
\begin{tabular}{|c|}
\hline
Parabolic of rank 6\\
\hline
$\til G_2+\til D_4$\\
\hline
\end{tabular}}}
\hskip-2cm
\vtop{\hsize4cm
$ \boxed{\begin{matrix}
&U&A_2&D_4\\
\hline
p&1,-1&0,0&0\\
\text{level 0}&&&&\\
v_1&1,1&0,0&0\\
v_2&0,0&0,1&0\\
v_3&0,0&1,-1&0\\
\text{level 1}&&\\
v_4&0,-1&-1,-1&0\\
v_5&0,-1&0,0&-d_1^*\\
\end{matrix}}
$}}
\end{figure}

\begin{figure}[h]
\caption{Vinberg's vectors and their Coxeter's subgraph for $U(2)+A_2+E_8$}\label{5-4(1)}
\hbox{
\vtop{\hsize5cm
\hbox{\includegraphics[width=0.7\textwidth]{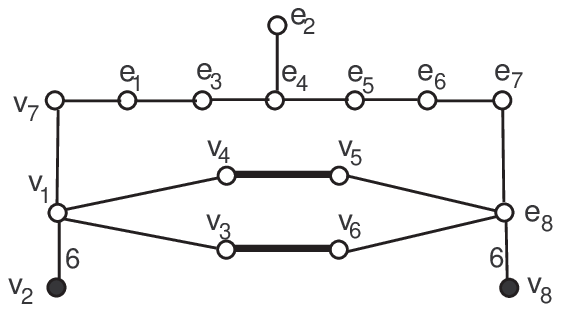}}
\hbox{
\begin{tabular}{|c|}
\hline
Parabolic of rank 10\\
\hline
$\til G_2+\til D_8$, $2\til G_2+\til E_6$, $2\til A_1+\til D_8$\\
\hline\end{tabular}}}
\hskip-2cm
\vtop{\hsize5cm
$\boxed{\begin{matrix}
&U(2)&A_2&E_8\\
\hline
p&1,-1&0,0&0\\
\text{level 0}&&\\
v_1&0,0&0,1&0\\
v_2&0,0&1,-1&0\\
\text{level 4}&&\\
v_3&0,-1&-1,-1&0\\
v_4&1,0&-1,-1&0\\
v_5&0,-1&0,0&-e_8^*\\
v_6&1,0&0,0&-e_8^*\\
\text{level 16}&&\\
v_7&1,-1&-1,-1&-e_1^*\\
\text{level 48}&&\\
v_8&3,-3&-4,-2&-3e_8^*\\
\end{matrix}}
$}}
\end{figure}

\begin{figure}[h!]
\caption{Vinberg's vectors and their Coxeter's subgraph for $U(2)+A_2+D_4$}\label{2-5(1)}
\hbox{
\vtop{\hsize 4cm
\hbox{\includegraphics[width=0.6\textwidth]{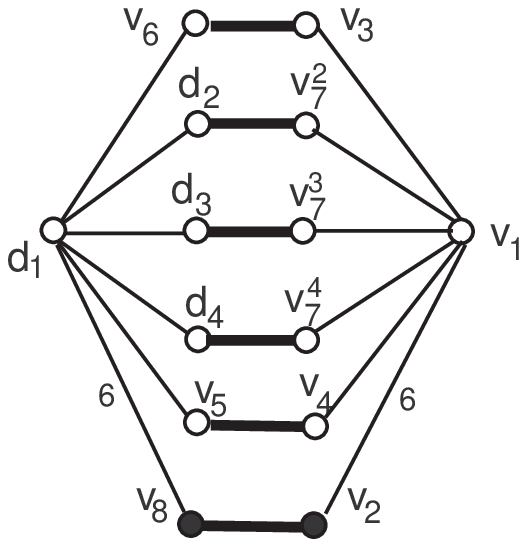}}
\hbox{
\begin{tabular}{|c|}
\hline
Parabolic of rank 6\\
\hline
$6\til A_1$, $\til G_2+\til D_4$\\
\hline\end{tabular}}}\hskip1cm
\hskip-2cm
\vtop{\hsize5cm
$\boxed{\begin{matrix}
&U(2)&A_2&D_4\\
\hline
p&1,-1&0,0&0\\
\text{level 0}&&\\
v_1&0,0&0,1&0\\
v_2&0,0&1,-1&0\\
\text{level 4}&&\\
v_3&0,-1&-1,-1&0\\
v_4&1,0&-1,-1&0\\
v_5&0,-1&0,0&-d_1^*\\
v_6&1,0&0,0&-d_1^*\\
\text{level 16}&&\\
v_7^2&1,-1&-1,-1&-2d_2^*\\
v_7^3&1,-1&-1,-1&-2d_3^*\\
v_7^4&1,-1&-1,-1&-2d_4^*\\
\text{level 48}&&\\
v_8&3,-3&-4,-2&-3d_1^*\\
\end{matrix}}
$}}
\end{figure}

(1) The completeness of Vinberg's sequences for $U(2)+A_2$ and $U+A_2+D_4$ follows from
Theorem \ref{finiteness} and Proposition \ref{sufficient-criterion}. Indeed,
in both cases the Coxeter graph does not contain Lann\'er's diagrams, and the only connected parabolic subgraphs are: $\widetilde G_2$ for $U(2)+A_2$; $\widetilde G_2$ and $\widetilde D_4$ for $U+A_2+D_4$.

Thus, by Theorem \ref{existence of symmetry} in the case of $\LL=U(2)+A_2$ the only $ P$-direct automotphism of $\LL$ is given by $v_1\to v_1, v_2\to v_2, v_3\to v_4, v_4\to v_3$.
Since due to Proposition \ref{reversing} this automorphism is $\Z/3$-direct (preserves $v_2$), we conclude that $U(2)+A_2$ is chiral.

Similarly, in the case of
$\LL= U+A_2+D_4$ each $P$-direct automorphism of $\LL$ keeps fixed the vector $v_3$. Hence, all $P$-direct symmetries are
$\Z/3$-direct, and we conclude that $U+A_2+D_4$ is chiral.

(2) When we treat  $U(2)+A_2+E_8$ and $U(2)+A_2+D_4$, we perform Vinberg's
algorithm only up to a step that provides a Coxeter subgraph which has a symmetry inducing
a $ P$-direct $\Z/3$-reversing involution, see Fig. \ref{5-4(1)} and \ref{2-5(1)}.

For $\LL= U(2)+A_2+E_8$, this is the involution
that permutes the vectors of Vinberg's sequence in accordance with the reflection in the vertical middle axis of the graph;
by Theorem \ref{existence of symmetry} it defines a $P$-direct automorphism of $\LL$  which maps $v_2$ to $v_8=-v_2\mod 3 \LL$ and by Proposition \ref{reversing} is $\Z/3$-reversing.

For $\LL=U(2)+A_2+D_4$, this is again the involution permuting the vectors of Vinberg's system in accordance with the reflection in the vertical middle axis of the graph, and it also maps $v_2$ to $v_8=-v_2\mod 3 \LL$,
and by a similar reason it is $P$-direct and $\Z/3$-reversing.
\end{proof}

\begin{remark} Coxeter's graphs on Fig. \ref{5-4(1)} and \ref{2-5(1)} are indeed complete.
\end{remark}

\begin{figure}[h!]
\caption{Vinberg's vectors and their Coxeter's subgraph for $U+A_2+2E_8$}\label{hexagon-new}
\includegraphics[width=0.65\textwidth]{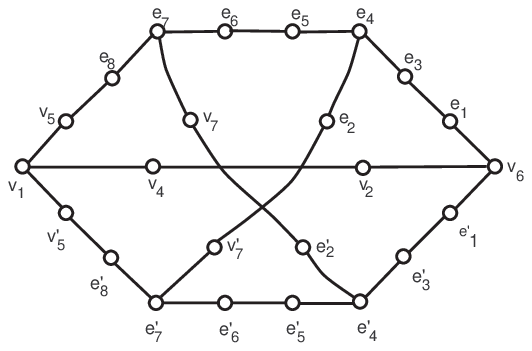}
\hskip-25mm
$\boxed{\begin{matrix}
&U&A_2&E_8&E_8\\
\hline
p&1,-1&0,0&0&0\\
\text{level 0}&&&&\\
v_1&1,1&0,0&0&0\\
v_2&0,0&0,1&0&0\\
v_3&0,0&1,-1&0&0\\
\text{level 1}&&&&\\
v_4&0,-1&-1,-1&0&0\\
v_5&0,-1&0,0&-e_8^*&0\\
v_5'&0,-1&0,0&0&-(e_8')^*\\
\text{level 16}&&&&\\
v_6&2,-2&-1,-1&-e_1^*&-(e_1')^*\\
\text{level 36}&&&&\\
 v_7&3,-3&-2,-1&-e_7^*&-(e_2')^*\\
 v_7'&3,-3&-2,-1&-e_2^*&-(e_7')^*\\
\text{level 48}&&&&\\
v_8&6,-6&-4,-2&-3e_8^*&-3(e_1')^*\\
v_8'&6,-6&-4,-2&-3e_1^*&-3(e_8')^*\\
\end{matrix}}
$
 \end{figure}

\newpage
\subsection{Achirality of lattices via
extension and reduction}\label{ext-red}

\begin{proposition}\label{reduction&extension}
Lattices $U(2)+A_2+2D_4$, $U(2)+A_2+D_4+E_8$, $U(2)+A_2+E_8(2)$, and  $U(2)+A_2+E_8$ are achiral.
\end{proposition}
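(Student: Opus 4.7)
The plan is to apply the extension principle of Lemma \ref{extension} to each of the four lattices, writing each as an orthogonal direct sum $\LL = \LL_e + \LL_h$ in which $\LL_h$ has already been shown achiral (in Lemma \ref{no-roots} or Lemma \ref{4lattices}(2)) and $\LL_e$ is elliptic, generated by $2$-roots, with discriminant of period $2$. For $\LL_e$ I would take either $D_4$ (discriminant $(\Z/2)^2$) or $E_8$ (trivial discriminant); both are positive definite root lattices, so the conditions imposed on $\LL_e$ in Lemma \ref{extension} are automatic, and the check reduces to identifying an achiral lattice inside each of the four given lattices as an orthogonal summand.

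Concretely, the decompositions I would use are
\[
U(2)+A_2+2D_4 = (U(2)+A_2+D_4) + D_4,
\]
\[
U(2)+A_2+D_4+E_8 = (U(2)+A_2+D_4) + E_8,
\]
and, via the identification $U(2)+A_2+E_8(2) = U(2)+E_6(2)+D_4$ already noted in Table \ref{even-lattices},
\[
U(2)+A_2+E_8(2) = (U(2)+E_6(2)) + D_4.
\]
Achirality of the base $U(2)+A_2+D_4$ is supplied by Lemma \ref{4lattices}(2), while achirality of $U(2)+E_6(2)$ is supplied by Lemma \ref{no-roots}. Three applications of Lemma \ref{extension} then deliver achirality of the three extensions.

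For the fourth lattice $U(2)+A_2+E_8$ listed in the proposition, a direct extension argument is not available, since its natural base $U(2)+A_2$ is in fact \emph{chiral} by Lemma \ref{4lattices}(1). Here I would simply invoke the result already established by Vinberg's algorithm in Lemma \ref{4lattices}(2). Alternatively, if one wishes to keep the same style of argument, one can combine Lemma \ref{extension} with the achirality of $U(2)+E_6(2)$ and the identification $U(2)+A_2+D_4+E_8 = (U(2)+E_6(2)+D_4)+E_8$ to record the same conclusion in two different ways.

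There is no serious obstacle in this proposition: the argument is pure bookkeeping, and the only verifications are the elementary facts that $D_4$ and $E_8$ are positive definite, generated by their $2$-roots, and have $2$-elementary discriminant. All the substantial content sits in the earlier Lemmas \ref{no-roots} and \ref{4lattices}(2) that produce the achiral base lattices; this proposition is how those computations are leveraged to settle further cases without running Vinberg's algorithm again.
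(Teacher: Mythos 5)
Your main argument is correct and is essentially the paper's: each of the first three lattices is written as an achiral base (from Lemma \ref{no-roots} or Lemma \ref{4lattices}) plus a positive definite lattice generated by $2$-roots with discriminant of period $2$, and Lemma \ref{extension} is applied. The only cosmetic difference is that for $U(2)+A_2+D_4+E_8$ the paper adds $D_4$ to the achiral base $U(2)+A_2+E_8$, while you add $E_8$ to the achiral base $U(2)+A_2+D_4$; both are equally valid. For the fourth lattice you rightly observe that $U(2)+A_2+E_8$ is already settled by Lemma \ref{4lattices}(2) and no extension is needed (the paper's own proof in fact uses it only as a base, deducing from it achirality of $U(2)+A_2+2E_8$ by the same extension lemma). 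One aside in your write-up should be deleted: the claimed identification $U(2)+A_2+D_4+E_8=(U(2)+E_6(2)+D_4)+E_8$ is false, since the left-hand lattice has $\rk\discr_2=2+2=4$ whereas the right-hand one has $\rk\discr_2=2+6+2=10$ (recall that $\discr_2 E_6(2)$ has rank $6$); the identification actually available from Table \ref{even-lattices} is only $U(2)+A_2+E_8(2)=U(2)+E_6(2)+D_4$, which you already use correctly for the third lattice. Since that remark is not load-bearing, removing it leaves a complete and correct proof along the same lines as the paper's.
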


\begin{proof}
To prove achirality of
lattices $U(2)+A_2+2D_4$, $U(2)+A_2+D_4+E_8$, and $U(2)+A_2+E_8(2)=U(2)+E_6(2)+D_4$ we apply Lemma \ref{extension} to $U(2)+A_2+D_4$, $U(2)+A_2+E_8$, and $U(2)+E_6(2)$ respectively. The same lemma applied to  $U(2)+A_2+E_8$ shows achirality of $U(2)+A_2+2E_8$.
\end{proof}

Figure \ref{hexagon-new} shows a hexagonal subgraph $\Gamma_{\rm hex}$ of Coxeter's graph of the lattice $U+A_2+2E_8$ which we found in
 \cite{chirality}, where we proved also the achirality of the involution
 $\Psi : \Gamma_{\rm hex} \to \Gamma_{\rm hex}$ that interchanges vertices $v_1$ and $e_4$ and keeps fixed the other
trivalent vertices (at the corners of the hexagon).

\begin{lemma}\label{hexagonal-subgraphs}
For each $n=1,2,3$, the vertex-set of
%the Coxeter graph
$\Gamma_{\rm hex}$
%of the lattice $\LL=U+A_2+2E_8$
admits a subset $J$ with the following properties:
\begin{enumerate}
\item
$J$ is invariant with respect to
the achiral involution
$\Psi :\Gamma_{\rm hex} \to \Gamma_{\rm hex}$;
\item
the sublattice $\LL_J$ spanned by $J$ in $\LL$ is primitive and isomorphic to $nD_4$;
\item
 the orthogonal complement $\LL^J$ of $\LL_J$ in $\LL$
 is isomorphic to
$U+A_2+D_4+E_8$, $U+A_2+2D_4$, and $U+E_6(2)$
for $n=1, 2$, and $3$, respectively.
\end{enumerate}
\end{lemma}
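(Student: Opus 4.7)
The plan is to exhibit, for each $n=1,2,3$, an explicit subset $J_n$ of vertices of $\Gamma_{\rm hex}$ and then verify the three properties in turn, invoking Proposition~\ref{determination} to identify $\LL^{J_n}$. The starting observation is that in Figure~\ref{hexagon-new} each of the six trivalent ``corner'' vertices of the hexagon, together with the three vertices to which it is connected by unit-weight edges, forms a $D_4$ Dynkin subdiagram (a central vertex joined to three mutually unlinked leaves at angle $\pi/3$). Since $\Psi$ exchanges only $v_1$ and $e_4$ and preserves all six corners, it acts on the collection of these six $D_4$-stars; in particular it fixes those stars that sit along a $\Psi$-invariant pair of adjacent sides and permutes the remaining ones in pairs. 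I will take $J_1$ to be one $\Psi$-stable $D_4$-star, $J_2$ to be the union of two $\Psi$-stable $D_4$-stars at a pair of antipodal corners, and $J_3$ to be the union of the three $D_4$-stars centred at the three alternating corners of the hexagon; property~(1) is then built into the construction.

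For property~(2), the fact that the Gram matrix of $J_n$ is that of $nD_4$ is read off directly from the weighted Coxeter graph, so $\LL_{J_n}\cong nD_4$ on the level of the span. Primitivity of $\LL_{J_n}$ in $\LL$ will be established by a torsion argument: since $\discr(nD_4)$ is a $2$-group and $\discr(\LL)=\discr(A_2)=\Z/3$, any primitive closure $\LL'\supset\LL_{J_n}$ in $\LL$ gives an embedding $\LL'/\LL_{J_n}\hookrightarrow\discr(\LL_{J_n})$ that must die in $\discr(\LL)$, forcing $\LL'=\LL_{J_n}$ provided no non-trivial $2$-torsion glueing from $\LL_{J_n}$ is realised by an element of $\LL\setminus(\LL_{J_n}+\LL^{J_n})$. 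This last point I would check case by case using the explicit coordinates in Figure~\ref{hexagon-new}: writing a hypothetical non-integral half-sum of generators of $J_n$ as a vector in $U+A_2+2E_8$ and comparing with the known lattice embedding.

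For property~(3), $\LL^{J_n}$ is a hyperbolic lattice of rank $20-4n$ with $\sigma_-=1$. Its discriminant splits as $\discr_2+\discr_3$: the summand $\discr_3(\LL^{J_n})=\Z/3$ comes from $\discr_3(\LL)=\Z/3$, which sits entirely in $\LL^{J_n}$ because $nD_4$ has no $3$-torsion in its discriminant, while $\discr_2(\LL^{J_n})$ is determined by the gluing subgroup $H=\LL/(\LL_{J_n}\oplus\LL^{J_n})$, an isotropic subgroup of $\discr(\LL_{J_n})\oplus\discr(\LL^{J_n})$ projecting bijectively onto its image in $\discr(nD_4)$. A rank count, together with inspection of parities of the generators in $\discr(\LL_{J_n})$, gives both the rank and the even/odd type of $\discr_2(\LL^{J_n})$. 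Once these three invariants (rank, $\rk\discr_2$, parity) are computed, Proposition~\ref{determination} identifies $\LL^{J_n}$ uniquely with the claimed lattice in each of the three cases.

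The main obstacle is the last step: determining that for $n=3$ the $2$-primary discriminant form of $\LL^{J_3}$ comes out \emph{even}, so as to match $U+E_6(2)$ rather than one of the odd candidates with the same rank and $\rk\discr_2$. This requires producing, from the explicit vectors of Figure~\ref{hexagon-new}, representatives of the glue classes in $\discr(\LL^{J_3})$ and evaluating the induced $\Q/2\Z$-valued quadratic form on them; I expect this to reduce to a short computation with the $e_i^*$ and $(e_i')^*$ components of the Vinberg vectors listed at levels $16$, $36$, and $48$.
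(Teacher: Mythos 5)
Your strategy is the paper's own: the paper's $J$ for $n=3$ is precisely the union of the three $D_4$-stars centred at the trivalent corners $e_7$, $e_7'$ and $v_6$ of $\Gamma_{\rm hex}$, namely $\{v_7,e_6,e_7,e_8\}$, $\{v_7',e_6',e_7',e_8'\}$, $\{e_1,e_1',v_2,v_6\}$, and the cases $n=1,2$ are handled by taking one, respectively two, of these three mutually orthogonal and individually $\Psi$-invariant summands. The genuine problem is your prescription for $n=2$. Reading the cycle off the coordinates in Fig.~\ref{hexagon-new}, the six corners occur in the cyclic order $v_1,e_7,e_4,v_6,e_4',e_7'$, and $\Psi$ interchanges $v_1$ and $e_4$; hence the only antipodal pair of $\Psi$-fixed corners is $\{e_7,e_4'\}$. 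But the stars at $e_7$ and $e_4'$ are not orthogonal: the leaf $v_7$ of the first has second-$E_8$-component $-(e_2')^*$, so $v_7\cdot e_2'=-1$, while $e_2'$ is a leaf of the second. The union of these two stars is then the parabolic diagram $\tilde D_7$, so it does not span a copy of $2D_4$ (its Gram matrix is only positive semi-definite), and property (2) fails for your $J_2$. The repair is to take two of the three stars at the alternating corners $e_7,e_7',v_6$ (which are pairwise non-antipodal) --- exactly what the paper does.

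Two further points of comparison. For primitivity the paper avoids your case-by-case coordinate check by one observation: for every nonempty subset $S\subseteq J$ there is a vertex of $\Gamma_{\rm hex}$ adjacent to only one element of $S$, so any would-be glue class of $\LL_J$ would pair non-integrally with a lattice vector. And the computation you flag as the main obstacle --- the parity of $\discr_2\LL^{J}$ for $n=3$ --- is not needed: since $\LL_J$ is primitive and $\discr_2\LL=0$ (the discriminant of $U+A_2+2E_8$ is $\Z/3$), one gets directly $\discr_2\LL^{J}\cong-\discr_2\LL_J=\oplus_n\discr_2 D_4$, which is even of rank $2n$, while $\discr_3\LL^{J}=\discr_3\langle6\rangle$; these are exactly the data Proposition~\ref{determination} requires, and this is how the paper concludes part (3) in one line.
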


\begin{proof}
The set of vertices $J=\{v_7,e_6,e_7,e_8\}\cup\{v_7',e_6',e_7',e_8'\}\cup\{e_1,e_1',v_2,v_6\}$ of
the graph $\G_{\rm hex}$ spans a sublattice
$\LL_J=3D_4$, where each $D_4$-component is invariant under the involution $\Psi$.
The embedding $\LL_J\subset \LL$
is primitive because for any subset
$S$ of $J$ there exists a vertex of $\Gamma_{\rm hex}$
adjacent only to one element
of $S$.
This proves (1) and (2) in the case $n=3$. For the cases $n=1$ and $n=2$, we take subsets of
$J$ that span just one and, respectively, two of the above
three summands $D_4$ and
get  (1) and (2)
in the same way.

Part (3) follows from  Proposition \ref{determination}.
Indeed, $\LL^J$ is even and hyperbolic,
$\discr_3 \LL^J=\discr \langle 6\rangle=\discr_3 E_6(2)$, $\discr_2 \LL^J= -\discr_2 \LL_J= \oplus_n\discr_2 D_4$,
and
$\discr_2 E_6(2)=\oplus_3\discr_2 D_4$.
\end{proof}

\begin{proposition}\label{reduction&extension2}
Lattices
$U+A_2+D_4+E_8$, $U+A_2+2D_4$, and $U+E_6(2)$ are achiral.
\end{proposition}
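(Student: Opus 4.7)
The plan is to deduce the proposition directly from Lemma \ref{reduction} applied to the achiral lattice $\LL = U+A_2+2E_8$, using the three elliptic subgraphs provided by Lemma \ref{hexagonal-subgraphs}. Recall that achirality of $U+A_2+2E_8$ was established in \cite{chirality}, where an explicit achiral (i.e.\ $P$-direct and $\Z/3$-reversing) involution $\Psi : \Gamma_{\rm hex}\to\Gamma_{\rm hex}$ was exhibited; this is precisely the symmetry of $\Gamma$ whose existence is required as input for Lemma \ref{reduction}.

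First I would verify that for each $n\in\{1,2,3\}$ the sublattice $\LL_J\cong nD_4$ spanned by the set $J\subset\Gamma_{\rm hex}$ from Lemma \ref{hexagonal-subgraphs} satisfies the hypotheses of Lemma \ref{reduction}: namely that $\Gamma_J$ is elliptic (clear, since its components are of type $D_4$), and that $\discr\LL_J = (\Z/2)^{2n}$ is of period $2$. The $\Psi$-invariance of $J$ (and hence of $\Gamma_J$) is delivered by part (1) of Lemma \ref{hexagonal-subgraphs}, so no additional verification of the symmetry condition is required. Applying Lemma \ref{reduction} then yields, for each $n$, achirality of the orthogonal complement $\LL^J$. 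By part (3) of Lemma \ref{hexagonal-subgraphs}, these three orthogonal complements are precisely $U+A_2+D_4+E_8$ for $n=1$, $U+A_2+2D_4$ for $n=2$, and $U+E_6(2)$ for $n=3$, which gives the proposition.

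The only non-routine point is making sure that Lemma \ref{reduction} applies uniformly. Since $\Psi$ is the same achiral automorphism of $\LL$ in all three cases and $J$ has been tailored in Lemma \ref{hexagonal-subgraphs} so that $\LL_J$ is primitive in $\LL$ and $\Psi$-invariant, the reduction mechanism goes through identically. The main conceptual obstacle was already absorbed in the preceding lemma, namely finding a single achiral involution of $U+A_2+2E_8$ together with a nested family of $\Psi$-invariant $D_4$-chains whose orthogonal complements realize the three target lattices; once this geometric packaging is in hand, the present proposition becomes a three-fold application of Lemma \ref{reduction}.
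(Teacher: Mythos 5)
Your proposal is correct and follows exactly the paper's own argument: the paper deduces achirality of the three lattices from achirality of $U+A_2+2E_8$ by applying Lemma \ref{reduction} with the $\Psi$-invariant sets $J$ supplied by Lemma \ref{hexagonal-subgraphs}, whose orthogonal complements are precisely the three target lattices. Your verification of the hypotheses (ellipticity of $\Gamma_J$, period-2 discriminant of $nD_4$, $\Psi$-invariance of $J$) simply spells out what the paper leaves implicit.
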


\begin{proof}
We deduce achirality of  $U+A_2+D_4+E_8$, $U+A_2+2D_4$, and $U+E_6(2)$ from achirality of $U+A_2+2E_8$ by
 applying Lemma \ref{reduction}.
\end{proof}

\begin{proof}[Proof of Theorem \ref{main-even}]
The lattice $U+A_2+E_8(2)=U+E_6(2)+D_4$ is obtained by adding $D_4$ to  $U+E_6(2)$.
Hence, by Lemma \ref{extension},  $U+A_2+E_8(2)$ is also achiral. All the other lattices from Table \ref{even-lattices}
are treated in Lemmas
\ref{no-roots} and \ref{4lattices}, Proposition  \ref{reduction&extension}, and in \cite{chirality}.
\end{proof}

%%%%%%%%%%%%%%%%%%%%%%    Section 4

\section{
%The case of odd discriminant form  and completion of the proof of Theorem \ref{main}
Cases of Odd Parity and Proof of Theorem \ref{main}}

\subsection{Lattices with odd discriminant forms}

Similar to the even case, we select from the list given in \cite{topology} the classes of odd parity,
translate them in terms of the invariants ($d$, $\rho$, parity), and apply Proposition \ref{determination}
to obtain  lattices shown in Table \ref{odd-lattices}.

\begin{table}[h]
\caption{Lattices $\M_+^0$ with odd discriminant form}\label{odd-lattices}
$\boxed{\begin{matrix}
\rho-d&d\\
\hline
0&t+2& -A_1&+\la6\ra&+t A_1,\quad 0\le t\le9\\
2&t+1&-A_1&+A_2&+t A_1,\quad 0\le t\le9\\
4&t& U&+A_2&+t A_1,\quad 1\le t\le9\\
6&t+2& U&+A_2+D_4&+t A_1,\quad 1\le t\le6\\
8&t+2& -A_1&+\la6\ra+E_8&+t A_1,\quad 0\le t\le5\\
10&t+1& -A_1&+A_2+E_8&+t A_1,\quad 0\le t\le5\\
12&t& U&+A_2+E_8&+t A_1,\quad 1\le t\le5\\
14&t+2& U&+A_2+D_4+E_8&+t A_1,\quad 1\le t\le2\\
16&t+2& -A_1&+\la6\ra+2E_8&+t A_1,\quad 0\le t\le1\\
18&t+2& -A_1&+A_2+2E_8&+t A_1,\quad 0\le t\le1\\
20&t& U&+A_2+2E_8&+t A_1,\quad  t=1\\
\end{matrix}}$
\end{table}

\subsection{Achirality of lattices via extension and reduction}
Achirality of the lattices $U+A_2+E_8+A_1$ and $U+A_2+2E_8$ is already established
in \cite{chirality}.

\begin{lemma}\label{A1-stabilization}
If a lattice $\LL$ in  Table \ref{odd-lattices} with the invariants $(\rho,d)$
is achiral, then the lattice with the invariants $(\rho+1,d+1)$ (if belongs to this table) is also achiral.
\end{lemma}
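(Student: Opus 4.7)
The plan is to apply Lemma \ref{extension} directly with $\LL_e = A_1$ and $\LL_h = \LL$. First I would verify that $A_1$ meets the hypotheses: it is positive definite (hence elliptic), it is generated by a single $2$-root, and its discriminant group is $\Z/2$, which is of period $2$. All three conditions of Lemma \ref{extension} are therefore satisfied, and the lemma yields that $\LL_e + \LL_h = A_1 + \LL$ is achiral whenever $\LL$ is.

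Next I would check that passing from $\LL$ to $A_1 + \LL$ corresponds to the shift $(\rho,d)\mapsto(\rho+1,d+1)$ within Table \ref{odd-lattices}. Each row of the table consists of lattices of the form $\LL_0 + tA_1$ with $\LL_0$ fixed and $t$ varying; replacing $t$ by $t+1$ is literally the direct-sum extension by one copy of $A_1$. Adding $A_1$ raises the rank by $1$, so $\rho$ grows by $1$. Since $\discr_2(A_1+\LL)=\discr_2 A_1 \oplus \discr_2 \LL = \Z/2 \oplus \discr_2 \LL$, the rank $d$ of $\discr_2$ also grows by $1$. The parity of $\discr_2$ is preserved: because $\LL$ already has odd $\discr_2$, the direct sum continues to admit a characteristic vector of non-zero quadratic form value.

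By Proposition \ref{determination}, the isomorphism class of a lattice in this family is determined by the triple $(\rho,d,\text{parity})$, so $A_1 + \LL$ is isomorphic to the entry of Table \ref{odd-lattices} with invariants $(\rho+1,d+1)$ whenever such an entry exists. Combined with its achirality established above, this completes the proof.

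The argument involves no obstacle beyond checking the hypotheses of Lemma \ref{extension}; the result is essentially a formal consequence of the extension lemma together with the lattice-determination statement of Proposition \ref{determination}.
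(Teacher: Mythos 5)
Your proof is correct and follows essentially the same route as the paper: identify the entry with invariants $(\rho+1,d+1)$ as $\LL+A_1$ (one more $A_1$ summand in the same row of Table \ref{odd-lattices}) and apply Lemma \ref{extension} with $\LL_e=A_1$. The extra verifications you include (hypotheses of Lemma \ref{extension}, the effect on $\rho$, $d$ and parity, and the appeal to Proposition \ref{determination}) are just a more explicit spelling-out of what the paper leaves implicit.
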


\begin{proof}
The lattice with the invariants $(\rho+1,d+1)$ must be in the same row of Table  \ref{odd-lattices}
as $\LL$ but
with the value of $t$ increased by one,
that is $\LL+A_1$, and it is left to apply Lemma \ref{extension}.
\end{proof}

By Lemma \ref{A1-stabilization}, it is left
to determine in each row of Table \ref{odd-lattices}
an achiral lattice with a certain minimal value $t=t_0$, it follows then that
all the lattices with $t>t_0$  are also achiral.

\begin{figure}[h!]
\centering
\caption{Vinberg's vectors and their partial Coxeter's subgraph for $U+A_2+E_8+A_1$ }\label{6-4}
\hskip25mm\includegraphics[width=0.8\textwidth]{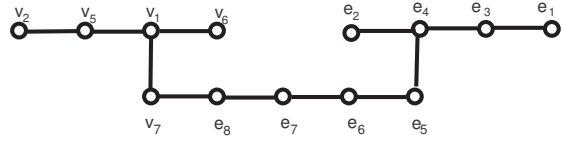}
$ \boxed{\begin{matrix}
&U&A_2&A_1&E_8\\
\hline
p&1,-1&0,0&0&0\\
\text{level 0}&&&&\\
v_1&1,1&0,0&0&0\\
v_2&0,0&0,1&0&0\\
v_3&0,0&1,-1&0&0\\
v_4&0,0&0,0&1&0\\
\text{level 1}&&&&\\
v_5&0,-1&-1,-1&0&0\\
v_6&0,-1&0,0&-1&0\\
v_7&0,-1&0,0&0&-e_8^*\\
\text{level 48}&&&&\\
v_8&6,-6&-4,-2&-3&-3(e_1)^*.
\end{matrix}}$
\end{figure}

Our main result in this section can be stated as follows.

\begin{theorem}\label{achirality-odd-complete}
In Table \ref{odd-lattices} the first achiral lattice in the first row $\rho=d$
is defined by $t=2$,  in the rows $\rho=d+2k$, $k=1,\dots,6$,
it is defined by  $\rho+d=14$, and in the last four rows all lattices are achiral.
\end{theorem}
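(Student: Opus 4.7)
The plan is to use Lemma \ref{A1-stabilization} to reduce the theorem to checking just one critical achirality per row of Table \ref{odd-lattices}, together with chirality of each lattice strictly below that threshold. Since Lemma \ref{A1-stabilization} propagates achirality upward along each row, it suffices to (i) exhibit achirality of the lattice at the stated threshold $t=t_0$ in each row and (ii) establish chirality for every lattice with $t<t_0$. The last four rows are characterized by $t_0$ coinciding with the minimum admissible value of $t$, so there is nothing to prove below threshold. Row $\rho-d=12$ is trivial as well: its threshold lattice is $U+A_2+E_8+A_1$, already known to be achiral from \cite{chirality}, and stabilization disposes of the rest of that row immediately.

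For the achirality thresholds I would cluster the arguments around the two anchors $U+A_2+E_8+A_1$ and $U+A_2+2E_8$ of \cite{chirality}, following the template of Section \ref{ext-red}. Extension via Lemma \ref{extension} (adding $A_1$, $D_4$, or $E_8$ summands with 2-elementary discriminant) takes care of the rows whose threshold lattice contains an anchor as a direct summand --- in particular the rows $\rho-d=14,20$. For thresholds whose hyperbolic part differs from an anchor by replacing $U+A_2$ with $-A_1+\la6\ra$, or by inserting additional $D_4$ summands (the rows $\rho-d=6,8,10,16,18$), I would locate an elliptic subgraph of a larger anchor's Coxeter graph that is invariant under its known achiral symmetry (in the style of Lemma \ref{hexagonal-subgraphs} and the hexagonal involution $\Psi$ of $\G_{\rm hex}$), identify the orthogonal complement by matching invariants via Proposition \ref{determination}, and apply Lemma \ref{reduction}. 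The very low-rank thresholds in the rows $\rho-d=0,2,4$ should be handled by running Vinberg's algorithm directly, as in Lemma \ref{4lattices}, and exhibiting an explicit $\Z/3$-reversing $P$-direct involution of the Coxeter graph via Theorem \ref{existence of symmetry} and Proposition \ref{reversing}.

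For the chirality part, I would run Vinberg's algorithm to completion on each sub-threshold lattice, justify termination through Proposition \ref{sufficient-criterion}, and then use Theorem \ref{existence of symmetry} combined with Proposition \ref{reversing} to confirm that every $P$-direct automorphism is $\Z/3$-direct --- typically because every symmetry of the resulting Coxeter graph fixes, modulo $3\LL$, a $6$-root vertex, as in the prototype Figure \ref{6-4} and the even-case lattices $U(2)+A_2$ and $U+A_2+D_4$ of Lemma \ref{4lattices}. The main obstacle, flagged by the authors in the Overview, is that for a couple of sub-threshold lattices the reflection group is too sparse and the fundamental polyhedron has very large, possibly infinite, volume, so Vinberg's algorithm does not terminate in a manageable number of steps. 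To overcome this I would invoke the machinery of Section \ref{4-roots-added}: adjoin to the reflection group the reflections across hyperplanes orthogonal to carefully chosen $4$-roots in order to shrink the fundamental polyhedron to a tractable one, and then descend candidate $\Z/3$-reversing symmetries of the enlarged polyhedron back to symmetries of the original Coxeter graph while preserving both the $P$-direct condition and the $\Z/3$-primary component. Performing this descent correctly, and controlling which symmetries survive it, is expected to be the most delicate and technically demanding step of the proof.
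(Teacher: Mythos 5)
Your proposal is essentially the paper's own proof: the border structure from Lemma \ref{A1-stabilization}, the threshold achiralities obtained from the two anchors $U+A_2+E_8+A_1$ and $U+A_2+2E_8$ via Lemmas \ref{reduction} and \ref{extension} (the paper's Lemmas \ref{reduction-1} and \ref{reduction-2} are exactly your invariant-subgraph reductions, taking sets of pairwise non-incident $2$-roots spanning $nA_1$ inside the anchors' Coxeter graphs), direct Vinberg computations for the low-rank cases (Propositions \ref{spherical-lattices} and \ref{odd-chiral}), and the $4$-root enlargement of Section \ref{4-roots-added} for the two intractable chiral lattices $-A_1+A_2+4A_1$ and $U+A_2+4A_1$ (Proposition \ref{4-roots-method}). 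The one simplification you miss is that Lemma \ref{A1-stabilization}, read contrapositively, propagates chirality downward along each row, so the paper only needs to verify chirality on the line $\rho+d=12$ (plus $-A_1+\la6\ra+A_1$ in the first row), whereas your plan of running Vinberg's algorithm to completion on every sub-threshold lattice is unnecessary and would likely meet the same large- or infinite-volume difficulties for more lattices than the two you flag.
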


We complete the proof of Theorem \ref{achirality-odd-complete}
in Section \ref{proof-odd-complete} after an analysis of several particular cases.

\begin{lemma}\label{reduction-1}
For any  $n=1,\dots,5$,
Coxeter's graph $\Gamma$ of the lattice $\LL=U+A_2+E_8+A_1$
contains a set $J$ of $n$  pairwise non-incident vertices so that:
\begin{enumerate}
\item
$J$ is invariant under an achiral involution  of $\LL$;
\item
$J$ generates a sublattice $\LL_J=nA_1$ primitively embedded into $\LL$;
\item
the orthogonal complement $\LL^J$ of $\LL_J$ has odd 2-discriminant form $\discr_2 \LL^J$ whose rank is
bigger by $n$ than that of
$\discr_2\LL$.
\end{enumerate}
\end{lemma}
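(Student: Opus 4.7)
The plan is to read off an explicit symmetry $\psi$ of Coxeter's graph $\G$ from the achiral involution of $\LL = U + A_2 + E_8 + A_1$ constructed in \cite{chirality}, via Theorem~\ref{existence of symmetry}, and then for each $n = 1,\dots,5$ to pick out a $\psi$-invariant set $J_n$ of $n$ pairwise non-incident 2-root vertices of $\G$. Condition (1) will then hold by construction.

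Primitivity in (2) I would verify by imitating the argument from Lemma~\ref{hexagonal-subgraphs}: for every non-empty subset $S\subseteq J_n$, I would exhibit a vertex of $\G$ adjacent to exactly one root of $S$. Since the vertices of $J_n$ are 2-roots and pairwise non-incident, $\LL_{J_n}$ is then isomorphic to $nA_1$ and primitively embedded in $\LL$. For condition (3), I would use the glue formalism for primitive sublattices: letting $H\subset\discr\LL_{J_n}\oplus\discr\LL^{J_n}$ be the isotropic subgroup with $\LL/(\LL_{J_n}\oplus\LL^{J_n})\cong H$ and $H_2$ its 2-primary part, one has
\[
\rk\discr_2\LL^{J_n} = \rk\discr_2\LL - n + 2\,\rk H_2,
\]
so the required identity $\rk\discr_2\LL^{J_n} = \rk\discr_2\LL + n$ is equivalent to $\rk H_2 = n$, that is, to surjectivity of the projection $\LL\to\discr_2\LL_{J_n}\cong(\Z/2)^n$, $x\mapsto (x\cdot v\bmod 2)_{v\in J_n}$.
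I would verify this surjectivity by producing, for each $v\in J_n$, an $x_v\in\LL$ with $x_v\cdot v$ odd and $x_v\cdot w$ even for $w\in J_n\smallsetminus\{v\}$. The oddness of $\discr_2\LL^{J_n}$ would then be automatic: any non-trivial element of $H_2$ has discriminant value $\tfrac12\bmod 2\Z$ on its $\LL_{J_n}$-component, which forces a matching odd value on the $\LL^{J_n}$-component.

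The hardest step will be the explicit selection of the sets $J_n$. For $n=1$, the natural choice $J_1=\{v_1\}$ with $v_1=u_1+u_2\in U$ already works: the orthogonal complement in $U$ is generated by $u_1-u_2$ of square $-2$, and the vector $u_1\in\LL$ realizes the glue, yielding $\LL^{J_1}\cong\langle-2\rangle+A_2+E_8+A_1$. For $2\le n\le 5$, I would adjoin further $\psi$-orbits of 2-roots of $\G$, each orthogonal to all previously chosen ones; the selections must be arranged so that $\LL^{J_n}$ realizes the lattices
\[
-A_1+\langle6\rangle+E_8+A_1,\quad U+A_2+D_4+2A_1,\quad U+A_2+5A_1,\quad -A_1+A_2+5A_1
\]
respectively, as predicted by Theorem~\ref{achirality-odd-complete} and recognized via Proposition~\ref{determination}. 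This coincidence with the target lattices in Table~\ref{odd-lattices} provides the strongest consistency check on the choices.
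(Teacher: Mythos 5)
Your strategy is the same as the paper's: the paper also takes the achiral involution of $\LL=U+A_2+E_8+A_1$ induced by the reflection symmetry of the Coxeter subgraph computed in \cite{chirality}, Sect.~7.5 (reproduced in Fig.~\ref{6-4}), proves primitivity by the adjacency criterion of Lemma~\ref{hexagonal-subgraphs}, and obtains (3) by the gluing argument, whose key point is exactly your surjectivity condition (phrased there as: the sum of any non-empty subset of $J$ pairs oddly with some vector of $\LL$). The trouble is that what you submit is a plan, not a proof: the entire content of the lemma is the explicit exhibition, inside the actual graph, of invariant sets of pairwise orthogonal $2$-roots for $n=2,\dots,5$ together with the verification of (1)--(3) for each of them, and this is precisely the step you defer (``I would adjoin further $\psi$-orbits\dots''); nothing in your argument shows such orbits exist with the required primitivity and mod~$2$ surjectivity. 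The paper does this concretely, taking $J=\{e_7\}$, $\{e_6,e_8\}$, $\{v_7,e_7,e_5\}$, $\{v_1,e_6,e_8,e_4\}$, $\{v_5,v_7,e_7,e_5,e_3\}$. Moreover, the one case you do carry out is wrong on point (1): the $13$-vertex subgraph of Fig.~\ref{6-4} is a tree whose reflection symmetry is taken about the central vertex $e_7$ and interchanges the two trivalent vertices $v_1$ and $e_4$ (exactly as the involution $\Psi$ interchanges $v_1$ and $e_4$ in the hexagon for $U+A_2+2E_8$); hence $\{v_1\}$ is \emph{not} invariant under the achiral involution of \cite{chirality} that you propose to use, which is why the paper takes $\{e_7\}$ for $n=1$. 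Your computations of $\LL^{J_1}$ and of the glue vector $u_1$ are fine, but without an invariant choice (or a different achiral involution, which you have not produced) condition (1) fails.

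Two smaller points on your treatment of (3). The rank identity $\rk\discr_2\LL^{J}=\rk\discr_2\LL-n+2\rk H_2$ tacitly uses that $\discr_2\LL^{J}$ is $2$-elementary, so that orders translate into $\Z/2$-ranks; this is true here, but it needs the remark made at the beginning of Section~\ref{reductions-extensions} (or, as in the paper, one argues directly that the glue gives an embedding $\discr_2\LL_J\hookrightarrow\discr_2\LL^J$ reversing the forms, with the residual $\Z/2$ of $\discr_2\LL$ coming from one extra element of square $\frac12$). Also, it is not true that \emph{every} non-trivial element of $H_2$ has value $\frac12$ on its $\LL_{J}$-component: an element projecting to the sum of an even number of the $A_1$-generators has integral value. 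Surjectivity does provide an element projecting to a single generator, and that element already forces oddness of $\discr_2\LL^{J}$, so the conclusion survives, but the statement as written is inaccurate.
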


\begin{proof}
In \cite{chirality}, Sect. 7.5, we determined the initial Vinberg's vectors of $\LL$. These vectors and Coxeter's graph
corresponding to the set of vectors $\{v_1,v_2,v_5,v_6,v_7,e_1$, $\dots,e_8\}$ are reproduced
on Fig. \ref{6-4}. This graph has an obvious reflection symmetry which induces on $\LL$ an achiral involution as it
 was shown in \cite{chirality}.

As $J$ we choose $\{e_7\}$, $\{e_6,e_8\}$, $\{v_7,e_7,e_5\}$, $\{v_1,e_6,e_8,e_4\}$, and $\{v_5, v_7,e_7,e_5, e_3\}$
for  $n=1, \dots ,5$ respectively.
It is invariant under this achiral involution.

Property (2) is satisfied for the same reasons as in the proof of Proposition \ref{hexagonal-subgraphs}.

To prove (3) we
%consider $\discr_2\LL=\discr_2 A_1$ and its discriminant form $q=\langle \frac12\rangle$
represent $(\discr_2\LL, q= \langle \frac12\rangle$) as a result of gluing of $(\discr_2 \LL_J, q_1)$ with $(\discr_2 \LL^J, q_2)$
(see \cite[Proposition 1.3.1]{stability})
and notice that, since for any non-empty subset of $J$
the sum of its elements has odd intersection with some element of $\LL$, this gluing corresponds to a group-embedding $\discr_2\LL_J\to \discr_2\LL^J$
reversing the discriminant forms, so that the only nontrivial element
in $\discr_2\LL$ is an image of an element $e\in\discr_2\LL^J$  with $q_2(e)=q(e)=\frac12$.
\end{proof}

\begin{lemma}\label{reduction-2}
For any  $n=1, 2, 3$, Coxeter's graph $\Gamma$ of the lattice $\LL=U+A_2+2E_8$
contains a set $J$ of $n$  pairwise non-incident vertices so that:
\begin{enumerate}
\item
$J$ is invariant under an achiral involution of $\LL$;
\item
$J$ generates a sublattice $\LL_J=nA_1$ primitively embedded into $\LL$;
\item
the orthogonal complement $\LL^J$ of $\LL_J$ has odd 2-discriminant form $\discr_2 \LL^J$ whose rank equals $n$.
\end{enumerate}
\end{lemma}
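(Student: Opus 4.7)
The plan is to mirror the proof of Lemma~\ref{reduction-1}, now working inside the hexagonal subgraph $\Gamma_{\rm hex}$ of Coxeter's graph of $\LL=U+A_2+2E_8$ recalled in Figure~\ref{hexagon-new} and equipped with the achiral involution $\Psi$ from \cite{chirality}. Recall that $\Psi$ interchanges $v_1$ with $e_4$, fixes the remaining four trivalent corners of the hexagon, and its action on the non-corner vertices is determined by the chains joining the corners.

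For each $n=1,2,3$ I will exhibit an explicit $\Psi$-invariant set $J$ of $n$ pairwise non-adjacent $2$-root vertices of $\Gamma_{\rm hex}$. A natural reservoir is provided by the three $\Psi$-invariant $D_4$-legs of the hexagon used in the proof of Lemma~\ref{hexagonal-subgraphs}: I will extract from these legs appropriate configurations of $n$ pairwise non-adjacent leaves, adding them one leg at a time and checking that the resulting set is $\Psi$-invariant and that each newly added vertex is non-adjacent to those already chosen.

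For property~(2), the argument of Lemma~\ref{hexagonal-subgraphs} will apply verbatim: for every non-empty $S\subset J$ I will point out a vertex of $\Gamma_{\rm hex}$ adjacent to exactly one element of $S$, which prevents any half-integer combination of elements of $S$ from lying in $\LL$. Together with pairwise non-adjacency and $v^{2}=2$ for each $v\in J$, this yields $\LL_J\cong nA_1$ primitively embedded.

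For property~(3), I will invoke the gluing description \cite[Proposition~1.3.1]{stability} of the primitive embedding $\LL_J\oplus\LL^J\hookrightarrow\LL$. Since $\discr_2\LL=0$, the $2$-primary part of the gluing subgroup is a maximal isotropic of $\discr_2\LL_J\oplus\discr_2\LL^J$ projecting bijectively onto each factor. This forces $|\discr_2\LL^J|=2^n$, of rank $n$, with form obtained from $\discr_2\LL_J=n\la1/2\ra$ by sign reversal, which preserves oddness. The main obstacle I foresee is the combinatorial bookkeeping: I need the action of $\Psi$ on $\Gamma_{\rm hex}$ spelled out concretely enough that simultaneous $\Psi$-invariance, pairwise non-adjacency, and primitivity of each $J$ can be read off by inspection, after which the discriminant computation is automatic.
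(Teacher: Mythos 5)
Your plan coincides with the paper's own proof: it works inside $\Gamma_{\rm hex}$ with the involution $\Psi$, reuses the primitivity argument of Lemmas \ref{hexagonal-subgraphs}/\ref{reduction-1}, and your gluing computation for (3) is exactly the paper's remark that $\discr_2\LL$ is trivial, so $\discr_2\LL^J=-\discr_2\LL_J=n\la-\tfrac12\ra$, which is odd of rank $n$. The bookkeeping you flag is settled at once by taking $J$ to be the \emph{central} vertices of your three $\Psi$-invariant $D_4$-legs, namely $\{e_7\}$, $\{e_7,e_7'\}$, $\{e_7,e_7',v_6\}$ (the paper's choice): these centers are automatically $\Psi$-fixed and pairwise orthogonal, whereas individual leaves of a leg need not be $\Psi$-invariant, so phrase the selection in terms of centers rather than leaves.
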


\begin{proof}
For the cases $n=1,2,3$, we choose as $J$
the sets
$\{e_7\}$, $\{e_7,e_7'\}$ and $\{e_7,e_7',v_6\}$ respectively (see Fig. \ref{hexagon-new}), which are obviously invariant
with respect to the achiral involution $\Psi$ introduced in section \,\ref{ext-red}.
The proof of (2) is the same as in Lemma \ref{reduction-1},
while for (3)
it is much easier, because $\discr_2 S=-\discr_2S^\perp$ for primitive sublattices in a lattice with trivial $\discr_2$.
\end{proof}

\begin{proposition}\label{achirality-odd}
All lattices in Table  \ref{odd-lattices}  for which $\rho+d\ge14$ and $\rho-d\ge 2$ are achiral.
\end{proposition}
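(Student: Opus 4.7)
The plan is to treat Table \ref{odd-lattices} row by row and use Lemma \ref{A1-stabilization} to reduce the problem in each row with $\rho-d\ge 2$ to exhibiting the achirality of a single lattice, namely the one with the smallest value of $t$ compatible with $\rho+d\ge 14$. A direct enumeration identifies these minimal representatives: in the rows $\rho-d=2,4,6,8,10$ they are $-A_1+A_2+5A_1$, $U+A_2+5A_1$, $U+A_2+D_4+2A_1$, $-A_1+\la6\ra+E_8+A_1$, $-A_1+A_2+E_8+A_1$; in the rows $\rho-d=14,16,18$ they are $U+A_2+D_4+E_8+A_1$, $-A_1+\la6\ra+2E_8$, $-A_1+A_2+2E_8$; the row $\rho-d=12$ contributes $U+A_2+E_8+A_1$, and the row $\rho-d=20$ contributes $U+A_2+2E_8+A_1$.

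For the first group (rows $\rho-d=2,4,6,8,10$), I will realize each minimal lattice as the orthogonal complement $\LL^J$ inside $\LL=U+A_2+E_8+A_1$ of the sublattice $\LL_J\cong nA_1$ produced by Lemma \ref{reduction-1} with $n=5,4,3,2,1$, respectively. By construction, $\sigma_-(\LL^J)=1$, $\rk\LL^J=13-n$, $\discr_3\LL^J=\Z/3$, and Lemma \ref{reduction-1}(3) forces $\discr_2\LL^J$ to be odd of rank $1+n$; Proposition \ref{determination} then identifies $\LL^J$ with the claimed entry of Table \ref{odd-lattices}. Since $U+A_2+E_8+A_1$ is achiral by \cite{chirality} and $\LL_J$ is preserved by an achiral involution (item (1) of Lemma \ref{reduction-1}), Lemma \ref{reduction} concludes that $\LL^J$ is achiral.

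The second group (rows $\rho-d=14,16,18$) is handled in the same spirit, starting from the achiral lattice $U+A_2+2E_8$ of \cite{chirality} and applying Lemma \ref{reduction-2} with $n=3,2,1$; Proposition \ref{determination} again matches each $\LL^J$ to the targeted table entry, and Lemma \ref{reduction} gives achirality. The case $\rho-d=12$ with $t=1$ is $U+A_2+E_8+A_1$, already shown to be achiral in \cite{chirality}. The single lattice $U+A_2+2E_8+A_1$ in the row $\rho-d=20$ is obtained from the achiral $U+A_2+2E_8$ by adjoining the elliptic 2-root lattice $A_1$, so Lemma \ref{extension} applies. In every row, Lemma \ref{A1-stabilization} then propagates achirality from the minimal $t$ to all permissible larger values, which covers all lattices with $\rho+d\ge 14$ and $\rho-d\ge 2$.

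The only step requiring care is the numerical bookkeeping in the application of Proposition \ref{determination}: for each $n$ one must verify that $(\rk,\rk\discr_2,\text{parity})$ of $\LL^J$ agrees with that of the intended table entry. This is a routine comparison, and no further geometric input beyond Lemmas \ref{reduction}, \ref{extension}, \ref{A1-stabilization}, \ref{reduction-1}, \ref{reduction-2} and the results of \cite{chirality} is needed.
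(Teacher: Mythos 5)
Your argument is correct and follows essentially the same route as the paper: reduce each row to a minimal representative on the lines $\rho+d=14$ and $\rho+d=20$ via Lemma \ref{A1-stabilization}, realize these as orthogonal complements $\LL^J$ of the $nA_1$-configurations of Lemmas \ref{reduction-1} and \ref{reduction-2} inside the achiral lattices $U+A_2+E_8+A_1$ and $U+A_2+2E_8$ of \cite{chirality}, and conclude with Lemma \ref{reduction} (plus Proposition \ref{determination} for the identification with the table entries). Your explicit treatment of the row $\rho-d=20$ via Lemma \ref{extension} is a minor point the paper leaves implicit, but it is the same machinery.
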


\begin{proof} The 6 lattices from Table  \ref{odd-lattices} with $\rho+d\ge14$ and $\rho-d\ge 2$ are the following:
$\LL= U+A_2+E_8+A_1$ and its 5 sublattices $L^J$ given by Lemma \ref{reduction-1} for $n=1,\dots, 5$. By Lemma \ref{reduction} achirality of $\LL$ (established in
\cite{chirality}) implies achirality of these $L^J$. Similarly, achirality $\LL= U + A_2+2E_8$ (also established in  \cite{chirality}) and Lemma \ref{reduction-2} imply achirality of lattices with $\rho+d=20, \rho-d\ge 14$. From achirality of these 10 lattices we obtain the required claim applying Lemma
\ref{A1-stabilization}.
\end{proof}

\subsection{A few more chiral lattices} We start from analysis of lattices in the first row of Table \ref{odd-lattices}.

\begin{proposition}\label{spherical-lattices}
Lattices  $-A_1+\la6\ra+t A_1$ are chiral for $t=0,1$ and achiral for $t\ge2$.
\end{proposition}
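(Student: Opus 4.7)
The plan is to split by $t$, handling $t=0,1$ directly with Vinberg's algorithm and reducing $t\geq 3$ to $t=2$ via Lemma~\ref{extension}. The common ingredient is the characterization of $6$-roots in $\LL=-A_1+\langle 6\rangle+tA_1$ (with standard generators $a,b,c_1,\dots,c_t$ of squares $-2,6,2,\dots,2$): a root $v=xa+yb+\sum z_ic_i$ is a $6$-root precisely when $v^2=6$, $3\mid x$, and $3\mid z_i$ for all $i$, which forces $y^2\equiv 1\pmod 3$. Consequently every $6$-root has $\langle 6\rangle$-coefficient $y\equiv\pm 1\pmod 3$, splitting $V_6$ into two classes modulo $3\LL_1$, where $\LL_1=\langle 6\rangle$.

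For $t=0$, the lattice has signature $(1,1)$, so the fundamental polyhedron $P$ in the hyperbolic line is an interval with two walls. Vinberg's algorithm from $p=a$ gives $\Phi^b=\{-b,\,3a+2b\}$, both $6$-roots with $\langle 6\rangle$-coefficient $\equiv-1\pmod 3$, connected by a dotted edge. The only nontrivial symmetry of $\Gamma$ is the swap, which lifts to an explicit involution $\phi\colon a\mapsto 2a+b,\ b\mapsto-3a-2b$; a direct check shows $\phi\in\Aut^+(P)$. Since both vertices of $\Phi^b$ lie in the same class, Proposition~\ref{reversing} shows $\phi$ is $\Z/3$-direct, whence $\LL$ is chiral.

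For $t=1$, Vinberg's algorithm from $p=a$ produces a finite-volume hyperbolic quadrilateral $P$ with two finite and two ideal vertices, and $\Phi^b=\{-b,\,-c_1,\,3a+2b,\,3a+b+3c_1\}$; the three $6$-roots split as $-b,\,3a+2b\equiv -b\pmod{3\LL_1}$ and $3a+b+3c_1\equiv +b\pmod{3\LL_1}$. Inspecting the Coxeter graph, the $2$-root $-c_1$ distinguishes the minority $6$-root $3a+b+3c_1$ from the other two by a nonzero edge weight ($m=12$ versus $m=0$ to the others), so every graph symmetry fixes both $-c_1$ and $3a+b+3c_1$ and can at most swap the pair $-b\leftrightarrow 3a+2b$ within the majority class. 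Applying Proposition~\ref{reversing} once more, every $P$-direct symmetry is $\Z/3$-direct, whence $\LL$ is chiral.

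For $t\geq 2$, it suffices to prove achirality at $t=2$; then Lemma~\ref{extension} with $\LL_h=-A_1+\langle 6\rangle+2A_1$ and $\LL_e=(t-2)A_1$ (elliptic, $2$-root generated, period-$2$ discriminant) propagates achirality to all $t\geq 3$. For $t=2$ the plan is to push Vinberg's algorithm far enough to produce a subgraph that spans $\LL$ over $\Z$ and admits an explicit $\Z/3$-reversing involution — the point being that with the second $A_1$ summand the basis $\Phi^b$ will contain $6$-roots from both $\pm b$ classes on equal footing (no single $2$-root singles one of them out), allowing a symmetry interchanging them. Theorem~\ref{existence of symmetry} then lifts this symmetry to a $P$-direct automorphism, and Proposition~\ref{reversing} confirms it is $\Z/3$-reversing. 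The main obstacle is this $t=2$ computation: locating a minimal spanning subgraph whose symmetry is simultaneously $P$-direct and $\Z/3$-reversing, and verifying completeness of $\Phi^b$ via Proposition~\ref{sufficient-criterion} despite the presence of Lann\'er edges whose complementary elliptic diagrams may be tight to construct.
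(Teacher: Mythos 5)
Your $t=0,1$ chirality arguments and your reduction of $t\ge 3$ to $t=2$ via Lemma~\ref{extension} are sound and essentially follow the paper's route (the paper handles $t=1$ and $t=2$ by Vinberg's algorithm and gets $t=0$ and $t\ge 3$ from Lemma~\ref{A1-stabilization}, the $t=0$ case by contraposition; your direct two-wall computation for $t=0$, with the explicit involution $a\mapsto 2a+b$, $b\mapsto -3a-2b$, is a fine substitute). One technical caveat you share with the paper: in the $t=0,1$ cases the set $\Phi^b$ spans only an index-$3$ sublattice, so Proposition~\ref{reversing} as literally stated (which assumes $J$ spans $\LL$ over $\Z$) does not apply verbatim; since you argue about explicit automorphisms, the congruence criterion on $\LL_1$-components still yields $\delta_3=\id$, but this should be said.

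The genuine gap is the $t=2$ case, which carries the whole ``achiral for $t\ge 2$'' half of the statement: you only describe a plan (``push Vinberg's algorithm far enough\dots''), exhibit no roots, no spanning subset $J$, and no explicit $\Z/3$-reversing symmetry, so nothing is actually proved there. The paper carries this out (Figure~\ref{8S}): running Vinberg's algorithm on $-A_1+\la6\ra+2A_1$ up to level $16$ produces eight roots $v_1,\dots,v_8$ whose set spans $\LL$ over $\Z$; the reflection of the Coxeter graph in its vertical mirror line is a symmetry which, by Theorem~\ref{existence of symmetry}, is induced by a $P$-direct automorphism, and it sends $v_1=(0,1,0,0)$ to $v_5=(3,-1,-3,0)$, which are not congruent modulo $3\LL$, so Proposition~\ref{reversing} gives $\Z/3$-reversal and hence achirality. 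Note also that your anticipated difficulty about verifying completeness of $\Phi^b$ at $t=2$ via Proposition~\ref{sufficient-criterion} is a red herring: for an \emph{achirality} conclusion one only needs a genuine subset of $\Phi^b$ (guaranteed wall-by-wall by Vinberg's algorithm) that spans $\LL$ and admits the symmetry, as the paper explicitly remarks; completeness is only needed for chirality claims such as your $t=0,1$ cases.
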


\begin{figure}
\caption{Vinberg's vectors and their Coxeter's graph for $-A_1+\la6\ra+A_1$}\label{9S}
\hbox{\vtop{\hsize38mm
\includegraphics[width=0.5\textwidth]{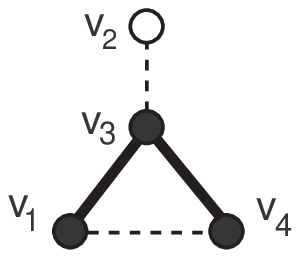}
}\hskip5mm
\vtop{\hsize4cm
$ \boxed{\begin{matrix}
&-A_1&\la6\ra&A_1\\
\hline
p&1&0&0\\
\text{level 0}&&&\\
v_1&0&1&0\\
v_2&0&0&1\\
\text{level 12}&&\\
v_3&3&-1&-3\\
v_4&3&-2&0\\
\end{matrix}}
$}}
\end{figure}

\begin{figure}
\caption{Vinberg's vectors and their Coxeter's subgraph for $-A_1+\la6\ra+2A_1$}
\label{8S}
\hbox{\vtop{\hsize4cm
\includegraphics[width=0.55\textwidth]{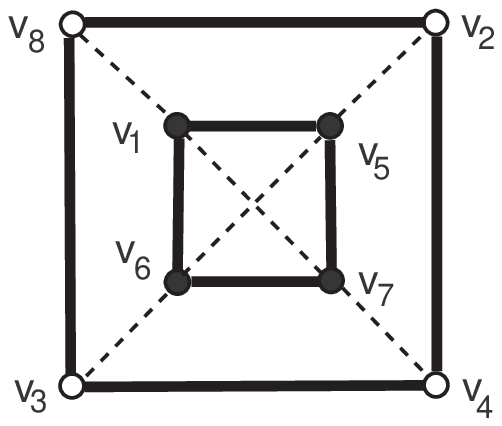}
}\hskip1cm
\vtop{\hsize5cm
$
\boxed{\begin{matrix}
&-A_1&\la6\ra&A_1&A_1\\
\hline
p&1&0&0&0\\
\text{level 0}&&&\\
v_1&0&1&0&0\\
v_2&0&0&1&0\\
v_3&0&0&0&1\\
\text{level 4}&&\\
v_4&1&0&-1&-1\\
\text{level 12}&&\\
v_5&3&-1&-3&0\\
v_6&3&-1&0&-3\\
v_7&3&-2&0&0\\
\text{level 16}&&\\
v_8&2&-1&-1&-1\\
\end{matrix}}
$}}
\end{figure}

\begin{proof}
For $\LL=-A_1+\la6\ra+A_1$ (the case $t=1$),
Vinberg's algorithm gives easily four root vectors which together with the corresponding
Coxeter's graph are shown on Figure \ref{9S}.
This graph describes a quadrilateral on the hyperbolic plane with consecutive sides defined by the roots
$v_1$, $v_3$, $v_4$, $v_2$;
two of its vertices (defined by the parabolic edges $v_1,v_3$ and $v_3,v_4$) lie at infinity
and the other two
vertices (defined by elliptic edges) are finite. The area of this quadrilateral is finite, which according to Vinberg's
criterion \ref{finiteness} means that Coxter's graph that we found is complete.
By Theorem \ref{existence of symmetry}
its only non-trivial $P$-direct automorphism
keeps fixed $v_2, v_3$ and interchanges
$v_1$ and $v_4$.
This automorphism is chiral, because it
does not change $v_3$ and due to Proposition \ref{reversing}
induces a trivial automorphism of $\discr_3(\LL)=\Z/3$.

Similar calculations for $\LL= -A_1+\la6\ra+2A_1$ (the case $t=2$)
give the result shown on Figure \ref{8S}.
The vertices of the graph obtained span $\LL$, and by this reason
the reflection
over the vertical mirror line of the graph induces a $P$-direct automorphism of $\LL$. This automorphism is $\Z/3$-reversing by Proposition \ref{reversing}, since
$v_1$ and $v_5$ are not congruent modulo $3\LL$.
 Applying Theorem \ref{existence of symmetry} we conclude that
$\LL$ is achiral (note that this application of the theorem does not require knowledge of the complete Coxeter's graph).

To deduce the result
 for
 other values of $t$, we use Lemma \ref{A1-stabilization}.
\end{proof}

Analysis of chirality of lattices with $\rho+d\le12$ requires case-by-case
study of the ones on the line  $\rho+d=12$.

\begin{figure}
\caption{Vinberg's vectors and their Coxeter's graph for $U+A_2+A_1+D_4$}\label{3-5}
\hbox{\vtop{\hsize 7cm
\raisebox{9mm}{\hskip8mm\includegraphics[width=0.57\textwidth]{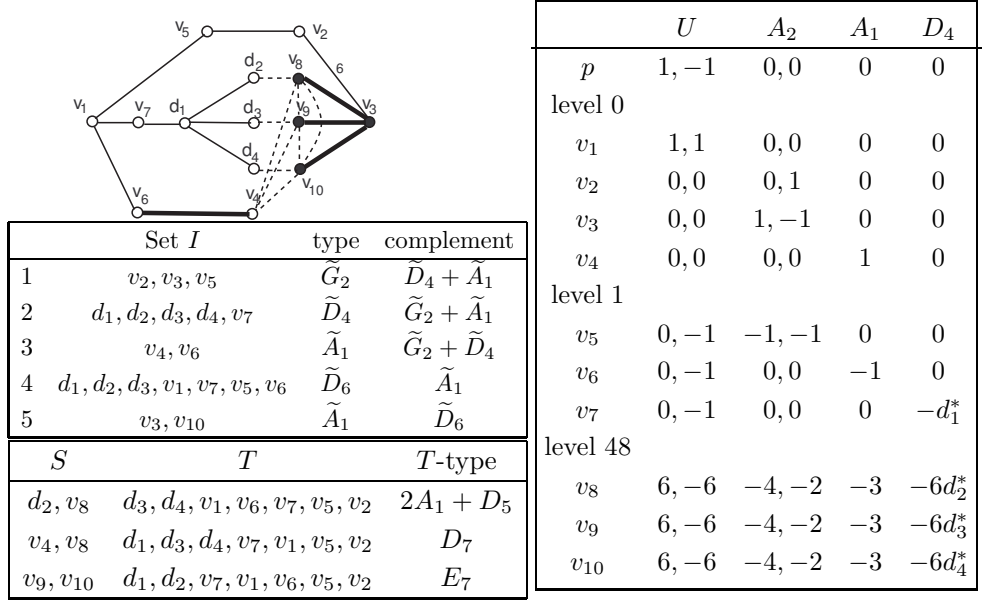}}
\vskip-11mm
\resizebox{70mm}{!}{
\begin{tabular}{|cccc|}
\hline
&Set $I$&type&complement\\
\hline
1&$v_2,v_3,v_5$&$\til G_2$&$\til D_4+\til A_1$\\
2&$d_1,d_2,d_3,d_4,v_7$&$\til D_4$&$\til G_2+\til A_1$\\
3&$v_4,v_6$&$\til A_1$&$\til G_2+\til D_4$\\
4&$d_1,d_2,d_3,v_1,v_7,v_5,v_6$&$\til D_6$&$\til A_1$\\
5&$v_3,v_{10}$&$\til A_1$&$\til D_6$\\
\hline
\end{tabular}}
\resizebox{70mm}{!}{
\begin{tabular}{|ccc|}
\hline
$S$&$T$&$T$-type\\
\hline
$d_2,v_{8}$&$d_3,d_4,v_1,v_6,v_7,v_5,v_2$&$2A_1+D_5$\\
$v_4,v_8$&$d_1,d_3,d_4,v_7,v_1,v_5,v_2$&$D_7$\\
$v_9,v_{10}$&$d_1,d_2,v_7,v_1,v_6,v_5,v_2$&$E_7$\\
\hline
\end{tabular}}
}
\vtop{\hsize4cm
$ \boxed{\begin{matrix}
&U&A_2&A_1&D_4\\
\hline
p&1,-1&0,0&0&0\\
\text{level 0}&&&&\\
v_1&1,1&0,0&0&0\\
v_2&0,0&0,1&0&0\\
v_3&0,0&1,-1&0&0\\
v_4&0,0&0,0&1&0\\
\text{level 1}&&\\
v_5&0,-1&-1,-1&0&0\\
v_6&0,-1&0,0&-1&0\\
v_7&0,-1&0,0&0&-d_1^*\\
\text{level 48}&&\\
v_8&6,-6&-4,-2&-3&-6d_2^*\\
v_9&6,-6&-4,-2&-3&-6d_3^*\\
v_{10}&6,-6&-4,-2&-3&-6d_4^*
\end{matrix}}$}}
\end{figure}

\begin{figure}
\caption{Vinberg's vectors and their Coxeter's graph for $-A_1+\la6\ra+E_8$}\label{4-5}
\hbox{\vtop{\hsize7cm
\raisebox{2mm}{\hskip12mm
\includegraphics[width=0.55\textwidth]{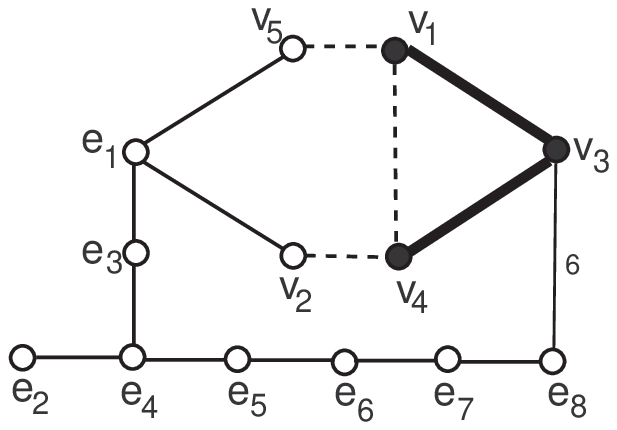}}
\vskip-3mm
\resizebox{70mm}{!}{
\begin{tabular}{|cccc|}
\hline
&Set $I$&type&complement\\
\hline
1&$v_2,e_1,e_2,e_3,e_4,e_5,e_6,e_7$&$\til E_7$&$\til A_1$\\
2&$v_1,v_3$&$\til A_1$&$\til E_7$\\
3&$v_2,v_5,e_1,e_2,e_3,e_4,e_5$&$\til D_6$&$\til G_2$\\
4&$v_3,v_7,e_8$&$\til G_2$&$\til D_6$\\
\hline
\end{tabular}}
\resizebox{70mm}{!}{
\begin{tabular}{|ccc|}
\hline
$S$&$T$&$T$-type\\
\hline
$v_1,v_{4}$&$e_1,e_2,e_3,e_4,e_5,e_6,e_7,e_8$&$E_8$\\
$v_1,v_5$&$v_2,e_2,e_3,e_4,e_5,e_6,e_7,e_8$&$A_1+D_7$\\
\hline
\end{tabular}}
}

\vtop{\hsize5cm
$ \boxed{\begin{matrix}
&-A_1&\la6\ra&E_8\\
\hline
p&1&0&0\\
\text{level 0}&&&\\
v_1&0&1&0\\
\text{level 4}&&&\\
v_2&1&0&-e_8^*\\
\text{level 12}&&\\
v_3&3&-1&-3e_8^*\\
v_4&3&-2&0\\
\text{level 16}&&\\
v_5&2&-1&-e_1^*\\
\end{matrix}}
$}}
\end{figure}

\begin{proposition}\label{odd-chiral}
Lattices
$U+A_2+D_4+A_1$
and  $-A_1+\la6\ra+E_8$ (from the fourth
and fifth rows of Table  \ref{odd-lattices})
are chiral.
\end{proposition}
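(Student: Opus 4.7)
The strategy is to mimic the approach of Lemma \ref{4lattices}(1) and Proposition \ref{spherical-lattices}: use Vinberg's algorithm to produce an explicit fundamental polytope, invoke Proposition \ref{sufficient-criterion} to certify that the resulting Coxeter graph is complete, and then rule out $\Z/3$-reversing $P$-direct automorphisms by combining Theorem \ref{existence of symmetry} with Proposition \ref{reversing}. Concretely, for each of the two lattices the plan is to launch Vinberg's algorithm from $p=u_1-u_2$ in the hyperbolic summand $U$ (respectively from the generator of $-A_1$ in the second case) and continue until the Coxeter graph $\G$ satisfies both hypotheses of Proposition \ref{sufficient-criterion}: every connected parabolic subgraph lies inside a parabolic subgraph of rank $n-1$, and every Lann\'er subgraph $\G_S$ admits a non-adjacent elliptic companion $\G_T$ with $\rk S+\rk T=n+1$. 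This bookkeeping produces the Vinberg vectors and graphs already recorded in Figures \ref{3-5} and \ref{4-5}.

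Once $\G$ is complete, Theorem \ref{existence of symmetry} identifies $P$-direct automorphisms of $\LL$ with symmetries of $\G$, since in both cases the vertices of $\G$ span $\LL$ over $\Z$. To detect chirality I would then apply Proposition \ref{reversing} to the splittings $\LL=A_2+(U+D_4+A_1)$ and $\LL=\la 6\ra+(-A_1+E_8)$, in which $\LL_1$ carries the $3$-primary part of the discriminant. It suffices to check that for every symmetry $\s$ of $\G$ there is at least one $6$-root $v\in\Phi^b$ with $\s(v)-v\in 3\LL_1$. For $U+A_2+D_4+A_1$ the three $6$-roots $v_8,v_9,v_{10}$ in $\Phi^b$ all share the $A_2$-component $(-4,-2)$, and any symmetry of $\G$ must permute them among themselves, so the condition holds automatically. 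For $-A_1+\la 6\ra+E_8$ the plan is to isolate the $6$-roots $v_1,v_3,v_4$ and argue that local invariants of $\G$ (edge weights, lengths of neighbours, valences in $\G$) single each of them out, so every symmetry must fix each of $v_1, v_3, v_4$ individually and the hypothesis of Proposition \ref{reversing} is then trivially verified.

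Combining these two verifications with Theorem \ref{chirality-theorem}, neither lattice admits an achiral automorphism, so both are chiral. The main technical obstacle I expect lies in the second lattice: on the one hand, certifying completeness of the Vinberg sequence when $E_8$ forces several level jumps and the fundamental polytope acquires many faces; on the other, ruling out \emph{every} potentially dangerous symmetry of $\G$, rather than just the visibly obvious ones, to confirm that no $P$-direct automorphism can exchange $6$-roots with non-congruent $\la 6\ra$-components.
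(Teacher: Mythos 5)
Your overall route coincides with the paper's: run Vinberg's algorithm, certify completeness of the root list via Theorem \ref{finiteness} and Proposition \ref{sufficient-criterion}, then convert graph symmetries into $P$-direct automorphisms by Theorem \ref{existence of symmetry} and test them with Proposition \ref{reversing}. For $U+A_2+D_4+A_1$ your argument is essentially the paper's: the 6-roots $v_8,v_9,v_{10}$ all carry the $A_2$-component $(-4,-2)$, so any symmetry permuting them is $\Z/3$-direct; you should still justify why no symmetry can send the remaining 6-root $v_3$ into $\{v_8,v_9,v_{10}\}$, or simply note that by pigeonhole at least one of $v_8,v_9,v_{10}$ is always mapped inside that set, which already triggers the first part of Proposition \ref{reversing}.

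The genuine gap is in your plan for $-A_1+\la6\ra+E_8$. You propose to show that local invariants of the graph pin down the 6-roots so that every symmetry fixes each of them individually; this is false. The Coxeter graph of this lattice (Fig. \ref{4-5}) has a nontrivial involutive symmetry, the $S_2$-symmetry permuting the pairs $(v_5,v_1)$ and $(v_2,v_4)$, and it does move 6-roots: both $v_1$ and $v_4$ are 6-roots (they have square 6 and pair divisibly by 3 with all of $\LL$). This symmetry also has to be taken into account when organizing the parabolic/Lann\'er verifications of Proposition \ref{sufficient-criterion}, as is done in the tables accompanying Fig. \ref{4-5}. Consequently ``all 6-roots are fixed'' cannot be the closing step; the correct finish, which is the one the paper uses, is to check that every symmetry, including the one that interchanges 6-roots, leaves the $\la6\ra$-component of some 6-root unchanged modulo $3$ (for instance the 6-root $v_3$ is preserved by the $S_2$-symmetry), so that Proposition \ref{reversing} still forces $\Z/3$-directness. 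Your concluding sentence correctly flags exactly this danger --- a symmetry exchanging 6-roots with non-congruent $\la6\ra$-components --- but the resolution you propose (rigidity of the 6-roots under all symmetries) is precisely what fails, so as written your argument for the second lattice does not go through.
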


\begin{proof}
The result of calculation of Vinberg's vectors and Coxeter's graphs for these two lattices are shown on
Fig. \ref{3-5} and \ref{4-5} respectively.
%\mnote{Kh: sentence stating the completeness moved ahead}
Each of these graphs
has an obvious symmetry:
\begin{itemize}
 \item
 $S_3$-symmetry on Fig. \ref{3-5} permuting 3 pairs of vertices: $d_i, v_{6+i}$, $i=2,3,4$;
\item
$S_2$-symmetry on Fig. \ref{4-5} permuting 2 pairs of vertices: $v_5, v_1$, and $v_2, v_4$.
\end{itemize}
%which we
We take into account this symmetry when we
apply Proposition \ref{sufficient-criterion} to check that our calculation of Vinberg's vectors is complete.
%A leftmost table (right below Coxeter's graph)
The details of this check (based on application of Theorem \ref{finiteness} and Proposition \ref{sufficient-criterion}) are shown on the
leftmost tables
of Figures \ref{3-5} and \ref{4-5} (right below Coxeter's graphs).
In the upper tables we
list: in the first column, the
sets of vertices
$I$
that generate a connected parabolic subgraph, $\G_{I}$,
one set in each symmetry class; in the second column, the types of these subgraphs;
and, in the third column, the types of the parabolic subgraphs that complement $\G_{I}$
to a rank $(n-1)$ parabolic graph.
In the bottom leftmost tables
we list the sets (denoted by $T$) of
vertices of elliptic complements to the dotted edges (whose endpoint pairs are denoted by $S$),
taking again just one representative from each symmetry class.

To conclude, we note
that although Coxeter's graphs on Fig. \ref{3-5} and \ref{4-5} have symmetries that interchange 6-roots, such symmetries
do not change the $A_2$-component (in Fig. \ref{3-5}), or the $\la6\ra$-component (in Fig. \ref{4-5}) modulo 3. Thus,
by Proposition \ref{reversing} these symmetries are $\Z/3$-direct and
the corresponding lattices are chiral.
\end{proof}

\begin{cor}\label{chiral-below}
In Table  \ref{odd-lattices}
all lattices in the fourth row and below are chiral provided  $\rho+d\le12$.
\end{cor}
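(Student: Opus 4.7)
The candidates with $\rho-d\ge 6$ and $\rho+d\le 12$ in Table~\ref{odd-lattices} are enumerated row by row: in row~4 the sum equals $10+2t$ with $t\ge 1$, so only $t=1$ qualifies and gives $U+A_2+D_4+A_1$; in rows~5 and~6 the sum is $12+2t$ with $t\ge 0$, so only $t=0$ qualifies and yields $-A_1+\la 6\ra+E_8$ and $-A_1+A_2+E_8$ respectively; and from row~7 onward the constraint $t\ge 1$ forces $\rho+d\ge 14$. Chirality of the first two lattices is exactly Proposition~\ref{odd-chiral}, so the remaining task is to establish the chirality of $\LL=-A_1+A_2+E_8$.

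For $\LL$, my plan is to carry out the same Vinberg-style analysis used in the proof of Proposition~\ref{odd-chiral} for the sibling lattice $-A_1+\la 6\ra+E_8$ (Figure~\ref{4-5}). I would take as initial point $p$ a generator of the $-A_1$ summand, placed in $\Upsilon$; at level~$0$ the roots orthogonal to $p$ are the ten simple $2$-roots of $A_2+E_8$, and higher-level $2$- and $6$-roots (with the divisibility condition on $6$-roots) are generated iteratively by Vinberg's procedure. The resulting Coxeter graph should closely resemble Figure~\ref{4-5}, with the single $\la 6\ra$-vertex replaced by the two-vertex $A_2$-block. Completeness is to be verified through Proposition~\ref{sufficient-criterion}, by tabulating the connected parabolic subgraphs and the elliptic complements of Lann\'er subgraphs, in the style of the tables attached to Figures~\ref{3-5} and~\ref{4-5}.

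With the graph in hand, Theorem~\ref{existence of symmetry} reduces the chirality question to an inspection of graph symmetries, and Proposition~\ref{reversing} distinguishes $\Z/3$-direct from $\Z/3$-reversing ones by looking at the $A_2$-component of a single $6$-root. The expected conclusion, parallel to the one already drawn for Figures~\ref{3-5} and~\ref{4-5}, is that every Coxeter symmetry preserves the $A_2$-component of a suitable $6$-root modulo $3\LL$, so every $P$-direct automorphism is $\Z/3$-direct and $\LL$ is chiral.

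The main obstacle is the Vinberg bookkeeping itself: the $E_8$-block introduces eight level-$0$ roots plus richer mixing at higher levels, so the parabolic/Lann\'er enumeration required by Proposition~\ref{sufficient-criterion} is more extensive than in the $\la 6\ra$-case. A specific delicate point is that the swap $a_1\leftrightarrow a_2$ of the simple roots of $A_2$ is itself $\Z/3$-reversing (since $a_1^*+a_2^*\in A_2$ forces $[a_2^*]=-[a_1^*]$ in $\discr_3 A_2$), so one must verify that no symmetry of the full Coxeter graph combines this swap with a compatible action on the $E_8$-block; the chirality conclusion rests on ruling out precisely this combination.
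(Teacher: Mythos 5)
Your enumeration of the relevant cases is correct and matches the paper: under $\rho-d\ge 6$ and $\rho+d\le 12$ the only lattices in Table~\ref{odd-lattices} are $U+A_2+D_4+A_1$ (row 4, $t=1$), $-A_1+\la 6\ra+E_8$ (row 5, $t=0$) and $-A_1+A_2+E_8$ (row 6, $t=0$), and the first two are exactly Proposition~\ref{odd-chiral}. The genuine gap is the third case: you never prove that $-A_1+A_2+E_8$ is chiral. What you give is a plan --- run Vinberg's algorithm, certify completeness via Proposition~\ref{sufficient-criterion}, inspect the graph symmetries --- ending in an ``expected conclusion''. You even isolate the point on which everything hinges (excluding a $P$-direct symmetry that combines the $\Z/3$-reversing swap $a_1\leftrightarrow a_2$ with a compatible action on the $E_8$-block) and leave it unverified. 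Since chirality is precisely the nonexistence of a $P$-direct $\Z/3$-reversing automorphism, the outcome of an unperformed rank-$11$ Vinberg computation cannot be asserted; the graph, its finiteness certificate and the symmetry analysis are the entire content of such an argument. A small additional inaccuracy in the sketch: with $p$ a generator of $-A_1$, the level-$0$ system is not just the ten simple $2$-roots of $A_2+E_8$, since the $6$-root $a_1-a_2$ also lies in $p^\perp$ (compare the level-$0$ rows in Figures~\ref{2-5-4roots} and~\ref{1-5}), and such $6$-roots are exactly what the $\Z/3$-bookkeeping turns on.

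The paper closes this case with no new computation at all: $-A_1+A_2+E_8$ has $(\rho,d)=(11,1)$, i.e.\ $d=1$, so it is one of the $(M-1)$-lattices whose chirality was already settled in \cite{chirality}, and the proof of Corollary~\ref{chiral-below} simply cites that paper (the word ``achiral'' in the printed proof is evidently a slip for ``chiral''; compare Table~\ref{chirality-all} at $(\rho,d)=(11,1)$ and Theorem~\ref{main}). So to repair your argument, either invoke \cite{chirality} for this lattice, as the paper does, or actually carry your proposed Vinberg analysis of $-A_1+A_2+E_8$ through to the end, including the explicit exclusion of the $a_1\leftrightarrow a_2$-type symmetry you flagged.
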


\begin{proof}
Only the first seven rows of Table \ref{odd-lattices} contain some lattices with $\rho+d\le12$.
The claim for the fourth and fifth rows follows from Proposition \ref{odd-chiral} and  Lemma \ref{A1-stabilization}.
For the sixth and seventh row,  $\rho+d\le12$ holds only for $t=0$, that is for lattices
$-A_1+A_2+E_8$ and $U+A_2+E_8$ which were analyzed in \cite{chirality}, and shown
there
to be achiral.
\end{proof}

\subsection{Extension of the root system}\label{4-roots-added}
In this subsection, to prove chirality of lattices we apply a bit different method.
Namely, we enlarge the root system by adding 4-roots to 2- and 6-roots.
This extends the reflection group with new reflections (defined by 4-roots) and leads to subdivision of initial fundamental polyhedra
(defined by 2- and 6-roots) into more simple fundamental polyhedra corresponding to the extended reflection group.
The following lemma shows that we still keep control over the symmetries of the initial fundamental polyhedra.

\begin{lemma}\label{reflection_subgroup}
Let $W$ be a subgroup of a discrete group $G$ generated by reflections in a hyperbolic space $\Lambda$.
Then, for any fundamental polyhedron $\Pi$ of $G$, there is only one fundamental polyhedron $P$ of $W$ containing $\Pi$,
and any $f\in \Aut(P)$ can be written as a product $gh_1\dots h_n$ where $g\in \Aut(\Pi)$ and each of $h_i$ is a reflection in a facet of $\Pi$
which is not a facet of $P$.
\end{lemma}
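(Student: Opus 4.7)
The plan is to exploit the fact that the reflection arrangement of $W$ is a sub-arrangement of the $G$-arrangement, so each $W$-chamber is tiled by $G$-chambers and every wall of $P$ is a wall of $G$. For the uniqueness part, the interior of $\Pi$ meets no $G$-wall, hence no $W$-wall, so $\Pi$ lies in the closure of a single $W$-chamber, which must be $P$.

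For the factorization I would use a gallery argument. First, in our applications $\Aut(P)$ sits inside $\Aut(\LL)$, which normalizes both $W$ and $G$ because it permutes roots of $\LL$ preserving length. In particular $f^{-1}(\Pi)$ is again a $G$-chamber, and since $f$ preserves $P$ it lies inside $P$. The adjacency graph of $G$-chambers contained in $P$ is connected---a generic path in $P$ from a point of $\Pi$ to a point of $f^{-1}(\Pi)$ produces a gallery $\Pi = \Pi_0, \Pi_1, \ldots, \Pi_n = f^{-1}(\Pi)$ in $P$ whose consecutive chambers share a facet lying in the interior of $P$.

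Next I would prove by induction that $\Pi_i = (h_1\cdots h_i)(\Pi)$ for suitable reflections $h_j$ in facets of $\Pi$ interior to $P$. Given this at step $i$, if $\Pi_{i+1}$ is obtained from $\Pi_i = (h_1\cdots h_i)(\Pi)$ by the reflection $r$ across an interior facet $H$ of $P$ shared with $\Pi_{i+1}$, one sets $h_{i+1}$ to be the reflection in the facet $H' := (h_1\cdots h_i)^{-1}(H)$ of $\Pi$; the identity $r = (h_1\cdots h_i)\,h_{i+1}\,(h_1\cdots h_i)^{-1}$ then yields $\Pi_{i+1} = (h_1\cdots h_i h_{i+1})(\Pi)$.

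The delicate point, and the main obstacle, is to verify that $H'$ is really interior to $P$ and not a facet of $P$. This rests on the normality of $W$ in $G$, which in our setting follows from length-preservation of roots by $\Aut(\LL)$: since $H$ is interior to the $W$-chamber $P$, the reflection $r$ cannot lie in $W$; were $H'$ a facet of $P$, then $h_{i+1}$ would be in $W$, and conjugation by $h_1\cdots h_i \in G$ would force $r \in W$, a contradiction. Once the induction is complete, setting $g := f\cdot h_1\cdots h_n$ gives $g(\Pi) = f(f^{-1}(\Pi)) = \Pi$, so $g \in \Aut(\Pi)$, and rearranging yields $f = g\cdot h_n\cdots h_1$, which is the desired decomposition after relabeling the $h_i$.
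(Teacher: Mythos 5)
Your argument is correct, but it reaches the conclusion by a different and more hands-on route than the paper. The paper settles the factorization in one line by appealing to the semidirect-product decomposition of the normalizer of a discrete reflection group (its $\hat W=W\rtimes\Aut(\Pi)$), i.e.\ it reduces everything to the structural fact that an isometry normalizing the reflection group splits as a group element times a chamber symmetry; what that leaves implicit is exactly the content you supply explicitly, namely that the element of $G$ carrying $\Pi$ to $f^{-1}(\Pi)$ inside $P$ can be written as a word in reflections across facets of $\Pi$ whose supporting walls meet the interior of $P$. Your gallery induction ($\Pi_i=(h_1\cdots h_i)(\Pi)$, with the crossed wall pulled back by conjugation to a facet of the base chamber) is the standard chamber--gallery bookkeeping, carried out correctly, and your appeal to normality of $W$ in $G$ --- valid here because $G\subset\Aut(\LL)$ permutes $2$- and $6$-roots and hence normalizes $W$ --- is precisely what guarantees that each $h_i$ is a reflection in a facet of $\Pi$ that is \emph{not} a facet of $P$. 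You also, rightly, read $\Aut(P)$ and $\Aut(\Pi)$ as stabilizers inside the isometries induced by $\Aut(\LL)$ (or at least inside the normalizer of $G$); this is needed for $f^{-1}(\Pi)$ to be a $G$-chamber, it is the setting in which the lemma is used in Proposition \ref{4-roots-method}, and without some such convention the fully abstract statement would not hold. Two cosmetic points: consecutive chambers of your gallery share a facet \emph{meeting} the interior of $P$ rather than lying in it, and the connecting path should be taken inside the convex set $P$, e.g.\ a generic perturbation of the geodesic joining interior points of $\Pi$ and $f^{-1}(\Pi)$. In short, the paper's approach buys brevity by quoting a structural fact, while yours buys a self-contained verification of the one step that fact does not immediately deliver.
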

\begin{proof} Straightforward from
$\hat W =W \rtimes Aut(\Pi)$ where $\Aut(\Pi)$ is the group of symmetries of $\Pi$ and $\hat W$ is the normalizer of $W$ in the isometry group of $\Lambda$.
\end{proof}

To find a sequence of roots defining a fundamental polyhedron of the extended reflection group we use Vinberg's algorithm like before. In corresponding Coxeter's graphs
the 4-roots are marked by encircled black squares.
% and are connected with 2-roots by double edges.

\begin{proposition}\label{4-roots-method}
%The lattices
Lattices $-A_1+A_2+4A_1$ and $-A_1+A_2+A_1+D_4=U+A_2+4A_1$ are chiral.
\end{proposition}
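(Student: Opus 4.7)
The plan is to apply the method outlined at the start of Section \ref{4-roots-added}: for each lattice $\LL$ under consideration, enlarge the reflection group $W$ (generated by $2$- and $6$-root reflections) to a larger group $\hat W$ by adjoining reflections in $4$-roots, so that a fundamental polyhedron $\Pi$ of $\hat W$ (contained in some fundamental polyhedron $P$ of $W$) has small enough volume to be handled by Vinberg's algorithm.

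First, for each of $\LL=-A_1+A_2+4A_1$ and $\LL=-A_1+A_2+A_1+D_4$, I run Vinberg's algorithm starting from a suitable $p\in\Upsilon$, now admitting root vectors of squares $2$, $4$, and $6$. I expect the resulting Coxeter graphs to be finite, with a few vertices (drawn as encircled black squares) coming from $4$-roots. Completeness of each graph is certified through Theorem \ref{finiteness} and Proposition \ref{sufficient-criterion} by listing the connected parabolic subgraphs and the Lann\'er subgraphs and checking the two bullet conditions.

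Second, Lemma \ref{reflection_subgroup} lets me factor any $f\in\Aut(P)$ as $f=g\,h_1\cdots h_k$ with $g\in\Aut(\Pi)$ and each $h_i$ a reflection in a $4$-root wall of $\Pi$ that is not a wall of $P$. Every such reflection $R_v$ (with $v^2=4$) acts trivially on $\discr_3\LL=\Z/3$: indeed, the $A_2$-component $v_{A_2}$ of $v$ satisfies $v_{A_2}^2\in\{0,2\}$ (since $A_2$ has no vectors of squares $\ge 4$), and the case $v_{A_2}^2=2$ is excluded because the condition $R_v\in\Aut(\LL)$ forces $v\cdot\LL\subset 2\Z$, whereas an $A_2$-root takes odd values on $A_2$; hence $v_{A_2}=0$ and $R_v|_{A_2}=\id$. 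Consequently $f$ is $\Z/3$-reversing if and only if $g$ is, and the chirality question for $\LL$ reduces to the analysis of $\Aut(\Pi)$.

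Third, Theorem \ref{existence of symmetry} identifies $\Aut(\Pi)$ with the symmetries of the extended Coxeter graph, and Proposition \ref{reversing} then tells me whether such a symmetry is $\Z/3$-direct: it suffices to verify, for every $6$-root vertex $v$ and every symmetry $\sigma$, that the $A_2$-components of $v$ and $\sigma(v)$ are congruent modulo~$3$. If this holds in both lattices, $\Aut(\Pi)$ is entirely $\Z/3$-direct, hence so is $\Aut(P)$, and Theorem \ref{chirality-theorem} yields chirality of $\LL$.

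The main obstacle is the first step: certifying completeness of the extended Vinberg sequence. Even though enlarging the reflection group shrinks the fundamental polyhedron, $4$-root candidates proliferate at each level and the bookkeeping of parabolic and Lann\'er subgraphs grows accordingly. A secondary subtlety is to make sure no $\Z/3$-reversing symmetry of $P$ hides inside a product of an $\Aut(\Pi)$-element with $4$-root reflections; the invariance of $\discr_3$ under $4$-root reflections established in the second step is precisely what rules this out.
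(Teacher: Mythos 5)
Your plan follows the paper's proof of this proposition essentially step for step: enlarge the reflection group by $4$-root reflections, run Vinberg's algorithm for the enlarged group, certify completeness via Theorem \ref{finiteness} and Proposition \ref{sufficient-criterion}, and then use Lemma \ref{reflection_subgroup} together with the triviality of $4$-reflections on $\discr_3$ to reduce the chirality question to the symmetries of the smaller polyhedron $\Pi$. However, as written it has two gaps. First, the substantive content of the proof is exactly the step you defer: the explicit extended Vinberg sequences for $-A_1+A_2+4A_1$ and $-A_1+A_2+A_1+D_4$, the verification of the two bullet conditions of Proposition \ref{sufficient-criterion} for the resulting graphs, and the analysis of their symmetries. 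Without these lists nothing is proved; and in fact the computation yields a stronger outcome than your third step anticipates: the extended Coxeter graphs turn out to have \emph{no} nontrivial symmetries at all, so $\Aut(\Pi)$ contributes nothing and every element of $\Aut(P)$ is a product of the finitely many $4$-root reflections in the walls of $\Pi$, each of which one then checks to be $\Z/3$-direct. (Also note that Theorem \ref{existence of symmetry} is stated for the group generated by $2$- and $6$-roots; for the enlarged group you only need the easy direction, namely that an isometry preserving $\Pi$ permutes its walls and induces a graph symmetry.)

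Second, your justification that every $4$-root reflection acts trivially on $\discr_3\LL$ is incorrect in detail. The claim $v_{A_2}^2\in\{0,2\}$ does not follow: $A_2$ contains vectors of square $6, 8,\dots$, and the orthogonal complement of $A_2$ in these lattices contains $-A_1$, so $v_{A_2}^2$ is not bounded by $v^2=4$. Indeed, the correct consequence of the evenness condition $v\cdot\LL\subset2\Z$ is $v_{A_2}\in 2A_2$, and $v_{A_2}$ need not vanish: in $-A_1+A_2+4A_1$ the vector $v=2g+2a_1+\epsilon_1+\epsilon_2$ (with $g^2=-2$, $\epsilon_i^2=2$) is a $4$-root with $v_{A_2}=2a_1\neq0$. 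The conclusion you need is nevertheless true and has a clean proof: for $x^*\in\LL^*$ one has $R_v(x^*)-x^*=-\tfrac{x^*v}{2}\,v$, which modulo $\LL$ is either $0$ or the $2$-torsion class of $\tfrac{v}{2}\in\LL^*$; since $R_v$ preserves the primary decomposition of $\discr\LL$, its restriction to $\discr_3\LL=\Z/3$ is the identity. Alternatively, for the purposes of Lemma \ref{reflection_subgroup} it suffices to check, as the paper does, that the finitely many $4$-roots actually occurring as walls of $\Pi$ have $A_2$-component trivial modulo $3$, and then conclude via Proposition \ref{reversing}.
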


\begin{figure}\label{2-5-4roots}
\caption{Vinberg's vectors and their Coxeter's graph for $-A_1+A_2+A_1+D_4=U+A_2+4A_1$
%(including 4-roots)
}\label{2-5-4roots}
\hbox{\vtop{\hsize6.5cm
\raisebox{15mm}{\hskip5mm
\includegraphics[width=0.5\textwidth]{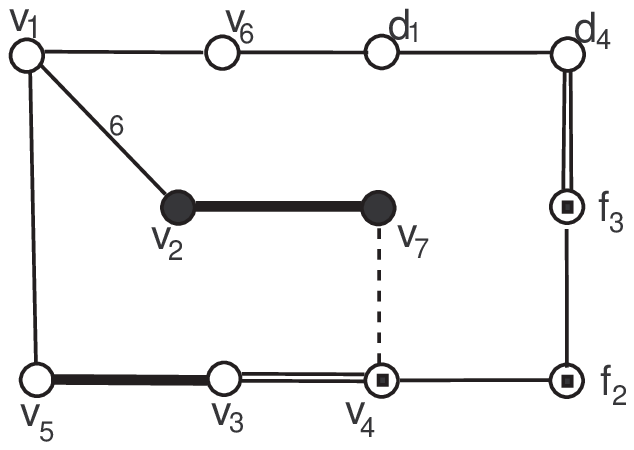}}
\vskip-15mm
\resizebox{64mm}{!}{
\begin{tabular}{|cccc|}
\hline
&Set $I$&type&complement\\
\hline
1&$v_5,v_1,v_2$&$\til G_2$&$\til F_4$\\
2&$v_6,v_1,v_2$&$\til G_2$&$\til C_4$\\
3&$v_6,d_1,d_4,f_3,f_2$&$\til F_4$&$2\til A_1$\\
4&$d_1,d_4,f_3,f_2,v_4$&$\til F_4$&$\til G_2$\\
5&$d_4,f_3,f_2,v_4,v_3$&$\til C_4$&$\til G_2$\\
\hline
\end{tabular}}
\resizebox{64mm}{!}{
\begin{tabular}{|ccc|}
\hline
$S$&$T$&$T$-type\\
\hline
$v_4,v_{7}$&$v_1,v_5,v_6,d_1,d_4,f_3$&$B_6$\\
\hline
\end{tabular}
}}
\vtop{\hsize5cm
$ \boxed{\begin{matrix}
&-A_1&A_2&A_1&D_4\\
\hline
p&1&0,0&0&0\\
\text{level 0}&&\\
v_1&0&0,1&0&0\\
v_2&0&1,-1&0&0\\
v_3&0&0,0&1&0\\
d_1&0&0,0&0&d_1\\
f_2&0&0,0&0&d_2-d_3\\
f_3&0&0,0&0&d_3-d_4\\
d_4&0&0,0&0&d_4\\
\text{level 2}&&\\
v_4&1&0,0&-1&-2d_2^*\\
\text{level 4}&&\\
v_5&1&-1,-1&-1&0\\
v_6&1&-1,-1&0&-d_1^*\\
\text{level 12}&&\\
v_7&3&-4,-2&0&0\\
\end{matrix}}
$}}
\end{figure}

\begin{proof}
We extend the group $W$
generated by the reflections $R_v$, $x\mapsto
x-2\frac{ex}{e^2}v$,  in the $2$- and $6$-roots,
up to a group $G$ generated, in addition, by the
reflections $x\mapsto x-2\frac{ex}{e^2}e$ in the $4$-roots $e$, that is $e\in \L$
with $e^2=4, e\cdot \L=0\mod 2$.
Applying Vinberg's algorithm to the group $G$
we obtain the lists of roots shown
in the rightmost tables of Figures \ref{2-5-4roots} and \ref{1-5},
and derive from them the corresponding Coxeter's graphs.
Completeness of these root-lists follows easily from
Theorem \ref{finiteness} and
Proposition \ref{sufficient-criterion}: see the list of connected parabolic subgraphs supplied with the type of
parabolic subgraphs that complement them to a rank $n-1$ parabolic graph, and the list of the elliptic complements to Lann\'er's subgraphs (here, they
are limited to dotted edges)

\begin{figure}[h!]
\caption{Vinberg's vectors and their Coxeter's graph for $-A_1+A_2+4A_1$
%(including 4-roots)
}\label{1-5}
\hbox{
\vtop{\hsize6cm
\raisebox{15mm}{\includegraphics[width=0.5\textwidth]{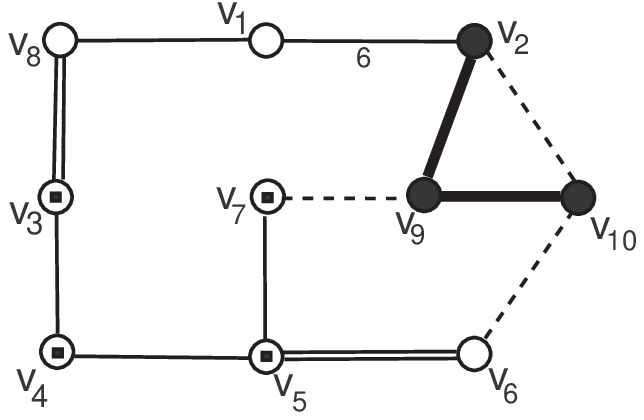}}
\vskip-15mm
\resizebox{54mm}{!}{
\begin{tabular}{|cccc|}
\hline
&Set $I$&type&complement\\
\hline
1&$v_8,v_1,v_2$&$\til G_2$&$\til B_3$\\
2&$v_4,v_5,v_6,v_7$&$\til B_3$&$\til G_2$\\
3&$v_2,v_9$&$\til A_1$&$\til C_4$\\
4&$v_8,v_3,v_4,v_5,v_6$&$\til C_4$&$\til A_1$\\
5&$v_9,v_{10}$&$\til A_1$&$\til F_4$\\
6&$v_1,v_8,v_3,v_4,v_5$&$\til F_4$&$\til A_1$\\
\hline
\end{tabular}
}
\resizebox{62mm}{!}{
\begin{tabular}{|ccc|}
\hline
$S$&$T$&$T$-type\\
\hline
$v_2,v_{10}$&$v_3,v_4,v_5,v_7,v_8$&$B_5$\\
$v_7,v_9$&$v_1,v_3,v_4,v_8,v_6$&$F_4+A_1$\\
$v_6,v_{10}$&$v_1,v_8,v_3,v_4,v_7$&$F_4+A_1$\\
\hline
\end{tabular}
}}
\vtop{\hsize5cm
\resizebox{64mm}{!}{
$ \boxed{\begin{matrix}
&-A_1&A_2&A_1&A_1&A_1&A_1\\
\hline
p&1&0,0&0&0&0&0\\
\text{level 0}&&\\
v_1&0&0,1&0&0&0&0\\
v_2&0&1,-1&0&0&0&0\\
v_3&0&0,0&1&-1&0&0\\
v_4&0&0,0&0&1&-1&0\\
v_5&0&0,0&0&0&1&-1\\
v_6&0&0,0&0&0&0&1\\
\text{level 2}&&\\
v_7&1&0,0&-1&-1&-1&0\\
\text{level 4}&&\\
v_8&1&-1,-1&-1&0&0&0\\
\text{level 12}&&\\
v_9&3&-4,-2&0&0&0&0\\
\text{level 108}&&\\
v_{10}&9&-8,-4&-3&-3&-3&-3\\
\end{matrix}}$}}}
 \end{figure}

None of these graphs has a non-trivial symmetry. Hence, according to Lemma \ref{reflection_subgroup},
the reflections defined by the 4-roots
from our lists preserve a certain fundamental polyhedron $P$ of $W$ and
generate all the symmetries of $P$.
The action of each of these 4-reflections on the $A_2$-component is trivial modulo 3. Due to Proposition \ref{reversing} this implies that
for both lattices all the $P$-direct automorphisms
are $\Z/3$-preserving.
\end{proof}

\begin{cor}\label{2-3rows}
Lattices in the second and third rows of table \ref{odd-lattices} with $\rho+d\le12$ are chiral.
\end{cor}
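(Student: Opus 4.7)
The plan is to combine Proposition \ref{4-roots-method} with the contrapositive of Lemma \ref{A1-stabilization} to cascade chirality downward along each of the two rows in question.

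First, I would identify which lattices actually need to be handled. In the second row of Table \ref{odd-lattices}, the lattice $-A_1+A_2+tA_1$ has rank $\rho=t+3$ and $d=t+1$, so $\rho+d=2t+4\le 12$ holds precisely for $t=0,1,2,3,4$. Similarly, in the third row, $U+A_2+tA_1$ has $\rho=t+4$, $d=t$, so $\rho+d\le 12$ holds precisely for $t=1,2,3,4$. Thus there are five lattices to handle in row two and four in row three.

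Next I would invoke Proposition \ref{4-roots-method}, which directly asserts chirality of the two extremal cases, namely $-A_1+A_2+4A_1$ (row two, $t=4$) and $U+A_2+4A_1=-A_1+A_2+A_1+D_4$ (row three, $t=4$). These are precisely the two lattices with $\rho+d=12$ in their respective rows.

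Finally, I would take the contrapositive of Lemma \ref{A1-stabilization}: within a fixed row of Table \ref{odd-lattices}, if the lattice with invariants $(\rho+1,d+1)$ is chiral, then so is the one with invariants $(\rho,d)$, since otherwise achirality together with Lemma \ref{extension} applied to the summand $A_1$ would propagate achirality upward. Iterating this, chirality at $t=4$ propagates down to $t=3,2,1,0$ in row two and to $t=3,2,1$ in row three, which exhausts the lattices with $\rho+d\le 12$ in these two rows.

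There is no serious obstacle at this stage; the substantive work is already packaged into Proposition \ref{4-roots-method} (the $4$-root extension of the reflection group and the verification, \emph{via} Theorem \ref{finiteness} and Proposition \ref{sufficient-criterion}, that the resulting Coxeter graphs on Figures \ref{2-5-4roots} and \ref{1-5} are complete and admit no nontrivial symmetry). The corollary itself is a one-step cascade argument from those two base cases.
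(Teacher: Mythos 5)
Your proposal is correct and follows exactly the paper's own argument: the paper derives this corollary from Proposition \ref{4-roots-method} (chirality of the two $\rho+d=12$ lattices $-A_1+A_2+4A_1$ and $U+A_2+4A_1$) together with the contrapositive of Lemma \ref{A1-stabilization}, cascading chirality down each row. Your enumeration of the relevant cases ($t=0,\dots,4$ in the second row and $t=1,\dots,4$ in the third) and the downward propagation are precisely what the paper's one-line proof leaves implicit.
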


\begin{proof}
It follows from Proposition \ref{4-roots-method} and Lemma \ref{A1-stabilization}.
\end{proof}

\subsection{Proof of Theorem \ref{achirality-odd-complete}}\label{proof-odd-complete}
Lemma \ref{A1-stabilization} shows existence of a border between the chiral and achiral lattices in each row of Table \ref{odd-lattices}.
For the first row the border
is found in Proposition \ref{spherical-lattices}.
For the other rows, the border is the line $\rho+d=14$, since for $\rho+d\ge14, \rho-d\ge 2$ the lattices are achiral
by Proposition \ref{achirality-odd}, and for $\rho+d<14$ are chiral by Corollaries \ref{chiral-below} and \ref{2-3rows}. \qed

\subsection{Proof of Theorem \ref{main}}
Theorem \ref{chirality-theorem} reduces the question on chirality of non-singular real cubic fourfolds $X$ to analysis of their lattices $\M_+^0(X)$.
These lattices are listed in Tables \ref{even-lattices} (even lattices) and \ref{odd-lattices} (odd lattices). The required analysis is performed in
Theorems \ref{main-even} and \ref{achirality-odd-complete}
and yields 18 chiral lattices described in Theorem \ref{main}. \qed

%%%%%%%%%%%%%%%      Section 5

\section{Concluding Remarks}
\subsection{Topological chirality}\label{weak}
It is natural to ask if it is possible for some chiral real non-singular cubic
fourfold $X\subset P^5$ to distinguish it from its mirror image
%\mnote{Kh: it is not clear what means "to distinguish"}
just by the topology of embedding of $X_\R$  into $P^5_\R$. More precisely, is it possible that $X_\R$ and its mirror image are not isotopic?
The answer turns out to be in the
negative.
Indeed, all the coarse deformation classes shown on the upper-right side of Table \ref{chirality-all} are achiral, and hence,
for any $X$ from these classes, $X_\R$ is isotopic to its mirror image. On the other hand,
%as is shown in
according to
\cite[Corollary 3.3.3]{topology},
%(see Corollary 3.3.3 there),
starting from these classes and performing surgeries through cuspidal-strata only, one can reach all the other classes
except, in notation of \cite{topology}, $\Cal C^{10,1}$ and $\Cal C^{2,1}_I$ (that is, in Table \ref{chirality-all}, the classes
with even parity and $(\rho,d)$ equal to $(20,0)$ and $(12,8)$), which are achiral by
the results of the present paper. Hence, there remain to notice that a surgery through a cuspidal-stratum
%creates a handle embedded into $P^5_\R$ in a standard "non-knotted" way.
is nothing but replacement of a small ball in $X_\R\subset P^5_\R$ by its ambient connected sum with $S^1\times S^3$ embedded in
a standard "unknotted" way (see \cite[Lemma 3.2.1]{topology}).
Since $X_\R$ is connected and its embedding into $P^5_\R$ is one-sided, it does not matter
to which side of $X_\R$ we attach $S^1\times S^3$.

\subsection{Pointwise achirality}\label{strong}
If a real cubic $X\subset P^5$
%its real locus $X_\R$
is symmetric with respect to some hyperplane in $P^5_\R$, then $X$ is obviously achiral.
A naturally arising question is whether the converse is true:
%Another question arises naturally:
{\it if a coarse deformation class is achiral, does it
contain a representative which is symmetric with respect to some
%an appropriate
mirror reflection}?
The methods of this paper can be developed further to respond to this question as well, but
requires an essential additional work.
Already first inspection shows existence
of such a representative
(symmetric across a hyperplane) in each of
the 3 achiral classes from the two bottom lines of Table 1 (that is all the achiral classes of
$M$- and  $(M-1)$-cubics).

Another approach is to look for explicit mirror-symmetric equations. For example, such an equation can be always written
in an appropriate coordinate system in the form $f_3(x_0,\dots, x_4) + x_5^2 f_1(x_0,\dots, x_4)=0$
where $x_5=0$ is the mirror-hyperplane and $f_3=0$ defines a real non-singular threefold cubic in $P^4$ transversal to a real hyperplane $f_1=0$
(compare, f.e., \cite{LPZ}).
This led us to a recurrent procedure that produces a sequence of maximal symmetric real non-singular cubic hypersurfaces
in consecutive dimensions (the details are to be exposed in a separate publication).

Note finally
that as was already pointed in \cite{chirality},
the equation $t(x_0^3+\dots +x_5^3) - (x_0+\dots + x_5)^3=0$
provides symmetric (across hyperplanes $x_i=x_j$) representatives for
4 achiral classes of cubics. Each of these classes is determined by topology of the real locus which is as follows:
$\Rp4$ for $t<0$ and $t>36$,
$\Rp4\+S^4$ for $16<t<36$, $\Rp4\#5(S^1\times S^3)$ for $4<t<16$, and $\Rp4\#{10}(S^2\times S^3)$ for $0<t<4$.

\end{document}